\documentclass[bibliography=totoc, 12pt]{amsart}
\usepackage[utf8]{inputenc}
\usepackage[margin=1in]{geometry} 
\usepackage{amsmath,amsthm,amssymb,amsfonts, dsfont, graphicx,mathtools,mathrsfs,tikz,hyperref,physics, stmaryrd, bm, tikz-cd,tkz-euclide,comment, enumerate, appendix, todonotes}
\usepackage{subcaption}
\usepackage[backend=biber,bibencoding=utf8,style=numeric,url=true,
    isbn=false,     
    doi=false,           
    abbreviate=true,            
    giveninits=true, 
    natbib=true,
    hyperref=true]{biblatex}
\usetikzlibrary{decorations.pathreplacing,decorations.markings}

\pdfstringdefDisableCommands{%
}

\newcommand{\z}{\mathbb{Z}}
\newcommand{\q}{\mathbb{Q}}
\newcommand{\cx}{\mathbb{C}}

\newcommand{\F}{\mathbb{F}}

\newcommand{\PP}{\mathbb{P}}
\newcommand{\TT}{\mathbb{T}}

\newcommand{\Fq}{\mathbb{F}_q}

\DeclareMathOperator{\supp}{supp}

\DeclareMathOperator{\Gal}{Gal}

\DeclareMathOperator{\GL}{GL}

\DeclareMathOperator{\cha}{char}

\DeclareMathOperator{\rk}{rank}

\def\O{\mathcal{O}}

\def\r{\mathbb{R}}

\theoremstyle{plain}
\newtheorem{theorem}{Theorem}[section]

\newtheorem{lemma}[theorem]{Lemma}

\setcounter{tocdepth}{1}

\numberwithin{equation}{section}

\theoremstyle{definition}

\theoremstyle{remark}
\newtheorem*{remark}{Remark}


\title{On a question of Davenport and diagonal cubic forms over $\F_q(t)$}
\author[J. Glas]{Jakob Glas}
\author[L. Hochfilzer]{Leonhard Hochfilzer}
\address{(J.G) IST Austria, Am Campus 1, 3400 Klosterneuburg, Austria.}
\address{(L.H.) Mathematisches Institut, Bunsenstraße 3-5, 37073 Göttingen, Germany}
\email{jakob.glas@ist.ac.at, leonhard.hochfilzer@mathematik.uni-goettingen.de}

\subjclass[2020]{11D45 (11P05, 11P55, 11T55, 14G05)}

\addbibresource{bibliography.bib}

\begin{document}
\begin{abstract}
Given a non-singular diagonal cubic hypersurface $X\subset\PP^{n-1}$ over $\F_q(t)$ with $\cha(\F_q)\neq 3$, we show that the number of rational points of height at most $|P|$ is $O(|P|^{3+\varepsilon})$ for $n=6$ and $O(\lvert P \rvert^{2+\varepsilon})$ for $n=4$. In fact, if $n=4$ and $\cha(\F_q) >3$ we prove that the number of rational points away from any rational line contained in $X$ is bounded by $O(|P|^{3/2+\varepsilon})$. From the result in $6$ variables we deduce weak approximation for diagonal cubic hypersurfaces for $n\geq 7$ over $\F_q(t)$ when $\cha(\F_q)>3$ and handle Waring's problem for cubes in $7$ variables over $\F_q(t)$ when $\cha(\F_q)\neq 3$. Our results answer a question of Davenport regarding the number of solutions of bounded height to $x_1^3+x_2^3+x_3^3 = x_4^3+x_5^3+x_6^3$ with $x_i \in \Fq[t]$.
\end{abstract}
\maketitle 
\tableofcontents
\section{Introduction}
Given a non-singular cubic form $F\in K[x_1,\dots, x_n]$ with coefficients in a global field $K$, it is natural to study the distribution of rational points on the hypersurface $X\subset \PP^{n-1}$ defined by $F$. In a quantitative sense, this entails understanding the counting function
\begin{equation}\label{Eq:DefN(P)}
    N(P)=\#\{\bm{x}\in \O^n\colon  |\bm{x}|< |P|, F(\bm{x})=0\},
\end{equation}
where $\O\subset K$ is the ring of integers, $P\in \O$ and $|\cdot|$ is a suitable absolute value on $K$. For $n\geq 5$, one generally expects an asymptotic formula of the form
\begin{equation}\label{Eq:ExpectedAsym}
N(P)\sim c|P|^{n-3}
\end{equation}
as $|P|\to\infty$ for some constant $c\geq 0$. For large values of $n$, this has been successfully achieved using the Hardy--Littlewood circle method. For $K=\q$, the current state of the art is due to Hooley~\cite{Hooley+1988+32+98}, who showed that $n\geq 9$ suffices for~\eqref{Eq:ExpectedAsym} to hold. In fact, conditional on unproved hypotheses about certain Hasse--Weil $L$-functions, in~\cite{hooley2014octonary} he pushed his approach further with the outcome that $n\geq 8$ is enough. For $K=\F_q(t)$, using the fact that the analogous hypotheses are in fact theorems by virtue of Deligne's work~\cite{ deligne1980conjecture}, Browning--Vishe~\cite{CubicHypersurfacesBV} proved unconditionally the asymptotic formula~\eqref{Eq:ExpectedAsym} for $n\geq 8$ and $\cha(K)>3$. However, for small values of $n$, an asymptotic remains largely out of reach. Assuming $F$ to be non-singular and diagonal, which means
\begin{equation}\label{Eq:DefinitionDiagonalCubic}
F(\bm{x})=\sum_{i=1}^nF_ix_i^3, \quad F_i\in \O\setminus\{0\},
\end{equation}
Heath-Brown~\cite{DiagonalCubicHB} has provided an upper bound of the form $N(P)\ll |P|^{3+\varepsilon}$ for $n=6$ and $K=\q$, matching the predicted asymptotic up to a factor of $|P|^\varepsilon$. However, his work relies on deep unproven conjectures about certain Hasse--Weil $L$-functions. 

Our first goal of this work is to prove the analogous result unconditionally for $K=\F_q(t)$. One of the main novelties of our work is that we also obtain results when $\cha(K) = 2$. Usually the circle method breaks down in small characteristic due to a Weyl differencing process. We manage to bypass this issue by applying Poisson summation instead, along with a recursion argument regarding the density of solutions of the dual form $F^*$ of $F$. 

From now on we write $\O = \F_q[t]$ and we work with the absolute value given by $\lvert P \rvert = q^{\deg P}$ for $P \in \O$. By abuse of notation we also write $|\bm{x}|\coloneqq \max_{i}|x_i|$ for $\bm{x}=(x_1,\dots,x_n)\in\O^n$.
\begin{theorem}\label{Th:TheTheorem}
Let $K=\F_q(t)$ with $\cha(K)\neq 3$. Suppose $F$ is given by~\eqref{Eq:DefinitionDiagonalCubic}.
Then for $n=6$ we have 
\[
N(P)\ll |P|^{3+\varepsilon}.
\]
\end{theorem}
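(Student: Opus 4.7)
The plan is to adapt the function field Hardy--Littlewood circle method and close the argument via a self-referential recursion involving the dual form $F^*$. First I would write
\[
N(P) = \int_{\T} S(\alpha) \, d\alpha, \qquad S(\alpha) = \sum_{\bm{x} \in \O^6,\, |\bm{x}| < |P|} \psi\bigl(\alpha F(\bm{x})\bigr),
\]
for a non-trivial additive character $\psi$ of $K_\infty = \F_q((1/t))$ and $\T \subset K_\infty$ the standard fundamental domain. Using Dirichlet's theorem in the function-field setting, one decomposes $\alpha = a/r + \beta$ with $\gcd(a,r) = 1$, $|r| \leq Q$ and $|\beta|$ small, where $Q$ is chosen of order $|P|^{3/2}$ to balance major and minor arc contributions.

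Because $F$ is diagonal, $S(\alpha) = \prod_{i=1}^{6} S_i(\alpha)$ with $S_i(\alpha) = \sum_{|x_i| < |P|} \psi(\alpha F_i x_i^3)$. Rather than Weyl differencing, which degenerates in characteristic $2$, I would apply Poisson summation to each $S_i$: this expresses $S_i(a/r + \beta)$ as a dual sum over $y_i \in \O$ weighted by a cubic Gauss sum $G_{a,r}(F_i; y_i)$ and a one-dimensional oscillatory Fourier integral whose support restricts $y_i$ to a box determined by $|r|$ and $|\beta|$. Re-assembling the product, summing over $a \bmod r$, and integrating over $\beta$ converts $N(P)$ into a weighted count of vectors $\bm{y}$ satisfying a cubic congruence $\sum \bar F_i y_i^3 \equiv 0 \pmod r$, where $\bar F_i$ is the inverse of $F_i$ modulo $r$; the leading contribution is essentially a count of solutions of bounded height to the dual diagonal cubic $F^*(\bm{y}) = 0$ in six variables.

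At this point the recursion enters: $F^*$ is itself a non-singular diagonal cubic in six variables with coefficients in $\O$, so the bound asserted by the theorem applies to its counting function $N_{F^*}$. After tracking parameters carefully, $N(P)$ is bounded by an expression involving $N_{F^*}(P^*)$ for a suitably reduced height $P^*$; treating the desired estimate $\ll |P|^{3+\varepsilon}$ as an inductive ansatz (on the size of the coefficients or on $|P|$) and feeding it back closes the recursion. The main obstacle is to calibrate the Poisson/Gauss-sum analysis so that the recursion genuinely saves at each step and the dual count enters on the correct scale, while separately controlling the ``degenerate'' dual vectors --- those on the singular locus of $F^*$, with vanishing coordinates, or where Gauss sums admit no cancellation. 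Additional care is needed in characteristic $2$: cubic exponential sums must be estimated by direct character-sum techniques rather than via a differencing identity, and one has to verify that non-singularity of $F$ is inherited by $F^*$ in this setting.
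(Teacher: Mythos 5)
Your overall framework is right: the paper does use the delta method over $\F_q(t)$, i.e.\ circle method plus Farey dissection plus Poisson summation, precisely so as to avoid Weyl differencing in characteristic $2$. And you have correctly identified that a self-improving recursion via the dual form is part of the argument. But as written, your plan has three genuine gaps, one of which is fatal in the generic case.

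First, your claim that ``$F^*$ is itself a non-singular diagonal cubic in six variables,'' which is what grounds your recursion, is only true in characteristic $2$ (Lemma~\ref{Le: DualChar2}). In characteristic $\neq 2, 3$ the dual form is the degree $3\cdot 2^{n-2}$ polynomial in~\eqref{Def: DualForm}, so the inductive ansatz does not apply to it. For $\cha(K)>3$ the paper instead \emph{directly} bounds the number of solutions of bounded height to $F^*(\bm{c})=0$ via a Galois-theoretic square-root linear-independence lemma (Lemmas~\ref{lem.lin_ind_square_roots}--\ref{lem.number_of_solutions_dual_form}), which reduces the count to solutions of small auxiliary diagonal cubic equations and yields $O(\widehat{C}^{3+\varepsilon})$. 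Without replacing your recursion by something like this, the argument cannot run in odd characteristic.

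Second, and more seriously, your proposal omits the deepest input of the whole proof. After Poisson summation the sum over nonzero $\bm{c}$ splits into the regime $F^*(\bm{c})\neq 0$ (the term $E_1(P)$ in~\eqref{Def: E_1(P)}) and the regime $F^*(\bm{c})=0$ ($E_2(P)$). The dual form's solution count only handles $E_2(P)$. The dominant work is in $E_1(P)$, and there pointwise Deligne square-root cancellation $S_\varpi(\bm{c}) \ll |\varpi|^{(n+1)/2}$ is not enough: one needs an additional factor of saving when averaging $S_r(\bm{c})$ over $r$. Over $\q$ this is Hooley's conjecture about Hasse--Weil $L$-functions; over $\F_q(t)$ it is unconditional by Deligne's proof of the Weil conjectures, and that is Lemma~\ref{Le: Hasse-WeilExpBound}, taken from Browning--Vishe. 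Your sketch says nothing about this extra average saving and, without it, you would land at a bound far short of $|P|^{3+\varepsilon}$.

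Third, your description of where the main contribution lives is off: after Poisson summation the main term comes from $\bm{c}=\bm 0$, not from the dual-form locus; $F^*(\bm{c})=0$ is the thin error term. There is also a technical point you would hit in executing the argument --- the lack of a usable partial summation over $\F_q(t)$ when trying to combine the $r$-average of $S_r(\bm{c})$ with the oscillatory integrals $I_r(\theta,\bm{c})$. The paper's fix is Lemma~\ref{Le: I_r(theta,c) deps on |r|} showing $I_r(\bm{c})$ depends only on $|r|$, which permits a clean $q$-adic dissection; without an analogue you would incur the same loss that hampered Browning--Vishe.
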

In applications of the circle method one frequently uses upper bounds for the counting function  
\[
M(P)=\# \left\{\bm{x}\in \O^6\colon  x_1^3+x_2^3+x_3^3 = x_4^3+x_5^3+x_6^3 \colon \lvert \bm{x} \rvert < \lvert P \rvert \right\} 
\]
to estimate the contribution from the minor arcs. Until now the strongest estimate followed from Hua's lemma, which gives $M(P)\ll |P|^{7/2+\varepsilon}$. In a 1964 letter to Keith Matthews~\cite{davLetter} Davenport asked whether one could achieve the bound $M(P)\ll |P|^{3+\varepsilon}$. Theorem~\ref{Th:TheTheorem} provides an affirmative answer to his question. 

For $n=4$ the situation is more complicated and one does not expect $\eqref{Eq:ExpectedAsym}$ to hold in general. The cubic surface $X\subset\PP^{3}$ might contain rational lines and any such will contribute $\gg |P|^2$ rational points to the counting function $N(P)$. According to Manin's conjecture~\cite{franke1989rational}, one expects
\begin{equation}\label{Eq:Expectn=4}
N^\circ(P)\sim c|P| (\log \lvert P \rvert)^{\rho -1},
\end{equation}
where $N^\circ(P)$ only counts rational points that do not lie on any rational line contained in $X$ and $\rho$ is the rank of the Picard group of $X$. 
 
Over $K=\q$, partial progress was made by Heath-Brown~\cite{DiagonalCubicHB}, who showed how to isolate the contribution to $N(P)$ coming from points on rational lines  when $F$ is diagonal. He also managed to give an upper bound of the form $N^\circ(P)\ll |P|^{3/2+\varepsilon}$, again only conditionally on certain conjectures about Hasse--Weil $L$-functions. As for $n=6$, working over $K = \mathbb{F}_q(t)$ allows us to establish the estimates unconditionally and we also succeed in isolating the contribution coming from points on rational lines under certain restrictions on the characteristic of $K$. 
\begin{theorem}\label{Th:TheoTheorem.n=4}
Suppose $F$ is given by~\eqref{Eq:DefinitionDiagonalCubic}. If $\mathrm{char}(K)>3$, then for $n=4$, we have
\[
N^\circ(P)\ll |P|^{3/2+\varepsilon},
\]
where $N^\circ(P)$ is defined as $N(P)$ with the extra condition that $\bm{x}$ does not lie on any rational line contained in the surface $F=0$. These lines, if they exist, are of the form
\[
b_i x_i + b_j x_j = b_k x_k + b_l x_l = 0,
\]
 for some $b_i,b_j,b_k,b_l \in K$ such that
\[
\left(\frac{b_i}{b_j} \right)^3 = \frac{F_i}{F_j}, \quad \text{and} \quad \left( \frac{b_k}{b_l} \right)^3 = \frac{F_k}{F_l},
\]
where $\{i,j,k,l\} = \{1,2,3,4\}$.

While if $\cha(K)=2$, then for $n=4$ we have 
\[
 N(P)\ll |P|^{2+\varepsilon}.
\]
\end{theorem}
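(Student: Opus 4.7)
The plan is to adapt Heath-Brown's conditional approach~\cite{DiagonalCubicHB} over $\q$ to the function-field setting, exploiting crucially that the hypotheses on Hasse--Weil $L$-functions on which his arguments depend are unconditional theorems over $\F_q(t)$ thanks to Deligne's work. The starting point for the first statement is a function-field version of the Kloosterman/delta-method refinement of the circle method, writing (schematically)
\[
N(P) = \sum_{r}\sum_{\substack{a \bmod r \\ (a,r)=1}}\sum_{\bm{x}\in\O^4} \psi\!\left(\frac{aF(\bm{x})}{r}\right) w(\bm{x}/P)
\]
with a smooth weight $w$. Poisson summation is then applied in each variable $x_i$ separately; the diagonality of $F$ makes the inner sum factor as a product of one-variable cubic Gauss sums $S_r(F_i,c_i)=\sum_{x\bmod r}\psi((F_ix^3+c_ix)/r)$, whose magnitudes are controlled by unconditional Weil-type bounds over $\F_q(t)$.

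Second, one must classify the lines on $X$ and isolate their contribution. Any line on the surface parametrises the simultaneous vanishing of two binary diagonal cubics $F_ix_i^3+F_jx_j^3$, which in characteristic $\neq 3$ splits precisely when $-F_i/F_j$ is a cube in $K$; the stated parametrisation $b_i^3/b_j^3=F_i/F_j$ records this exactly. In the Fourier-dual picture, these lines correspond to a distinguished set of dual vectors $\bm{c}$ (those whose coordinates are proportional in a way that picks out line-points); after discarding them, the remaining dual sum enjoys a non-degeneracy that, combined with the cubic Gauss sum bounds and a careful summation over $r$, delivers the target $|P|^{3/2+\varepsilon}$ bound for $N^\circ(P)$.

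For the second statement in characteristic $2$, the obstruction is that Weyl differencing breaks down, as it implicitly relies on the invertibility of $3$ and on the absence of characteristic-$2$ pathologies in the passage from cubic to bilinear forms. As emphasised in the introduction, the workaround is to replace Weyl differencing by a direct Poisson summation coupled with a recursion: a bound on $N(P)$ for $F$ is fed through the Poisson expansion, producing a sum weighted by the density of solutions of the dual form $F^*$, and the two bounds are iterated. Since $F^*$ is again a diagonal cubic of comparable shape, the recursion closes with the claimed $|P|^{2+\varepsilon}$ bound. This matches the size of the line contribution, so no further removal of lines is necessary (or, with these tools, possible) in this regime.

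The hardest part is likely the precise matching between the Fourier-dual sum and the geometric line contribution in the first statement: one must check that, at the level of the Poisson expansion, exactly the dual vectors associated with the enumerated lines need to be removed, and then extract enough cancellation from the remaining cubic Gauss sums, including at bad primes dividing the $F_i$ or the resultant data determining which lines are $K$-rational. In the characteristic-$2$ case, the main subtlety is engineering the dual recursion so that it converges to a usable bound without the aid of Weyl differencing or Hua's lemma in their standard forms, while still keeping the bookkeeping over denominators $r\in\Fqt$ clean.
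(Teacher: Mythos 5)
Your high-level framework is the same as the paper's: the function-field delta method of Browning--Vishe, factorization of $S_r(\bm{c})$ into one-variable cubic exponential sums via Poisson summation, classification of the lines on $X$ via cube conditions on the $F_i/F_j$, identification of a ``distinguished'' set of dual vectors $\bm{c}$ that carry the line contribution, and a bootstrap/recursion through the dual form in characteristic~$2$. The char-$2$ sketch in particular matches the paper's approach (the dual of a diagonal cubic is again a non-singular diagonal cubic in char~$2$, and a trivial bound on solutions of $F^*=0$ is fed back through the delta method to self-improve to $|P|^{2+\varepsilon}$).

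That said, there are two genuine gaps. First, the core of the first statement is not ``discarding'' certain dual vectors but rather \emph{proving an identity}: one must show that the contribution to the delta-method expansion from the special solutions of $F^*(\bm{c})=0$ (i.e.\ the $\bm{c}$ with $(F_i^{-1}c_i^3)^{1/2}\pm(F_j^{-1}c_j^3)^{1/2}=0$ pairwise) equals $\sum_{\bm{x}}^{\mathrm{line}} w(P^{-1}\bm{x})+O(|P|^{3/2+\varepsilon})$ exactly. The paper's proof of this (Lemma~\ref{lem.special_soln=lines}) is substantial: one performs a unimodular change of variables aligning coordinates with the lines, applies Poisson summation a second time in the line directions, identifies the main term from $\bm{j}=\bm{0}$ as precisely the line count, and then estimates new exponential sums $T_r(\bm{j})$ and archimedean integrals $J_r(\bm{j})$ for $\bm{j}\neq\bm{0}$, including a quadratic-form point-count modulo $\varpi$ and a careful treatment of small prime powers. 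Your proposal flags this as ``the hardest part'' but gives no argument for it; without it one does not obtain $N^\circ(P)\ll|P|^{3/2+\varepsilon}$, only $N(P)\ll|P|^{2+\varepsilon}$.

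Second, you do not address how to combine the cancellation coming from the Hasse--Weil averages of $S_r(\bm{c})$ over square-free $r$ with the integral bounds. Over $\q$ one uses partial summation, which is not available over $\F_q(t)$ in a usable form, and a na\"ive substitute (as in the original Browning--Vishe argument) loses a power that would be fatal here. The paper's fix is the observation (Lemma~\ref{Le: I_r(theta,c) deps on |r|}) that the $\theta$-averaged integral $I_r(\bm{c})$ depends only on $|r|$, so one can work $q$-adically in $|r|$ and apply the exponential-sum averages at fixed $|r|$ without loss. This idea is absent from your sketch, and without it the $E_1(P)$ estimate does not close even in the $n=6$ case, let alone $n=4$.
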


In characteristic 2 the shape of the dual form of $F$ prevents us from isolating the contribution coming from rational points on rational lines to $N(P)$. However, we still manage to give a non-trivial upper bound for the counting function $N(P)$, thereby providing evidence that the main contribution to $N(P)$ comes from points on rational lines.

Our work also shares some similarity with the recent findings of Wang. In~\cite{wang2021approaching} he established an asymptotic formula for $N(P)$ for diagonal cubic forms over $\q$ when $n=6$  conditional on conjectures about mean values of ratios of $L$-functions and the large sieve. His approach required to isolate the contribution coming from rational points on rational linear subspaces, which he achieved in~\cite{wang2021isolating}, similar to Heath-Brown's~\cite{DiagonalCubicHB} treatment when $n=4$. 
 It would be interesting to see to what extent his work can be made unconditional over $\F_q(t)$. 

So far we have ignored the constant $c$ appearing in the asymptotic formula~\eqref{Eq:ExpectedAsym}, despite its arithmetic significance. It encapsulates information about the existence of rational points on $X$ and has received a conjectural interpretation as an adelic volume by Peyre~\cite{peyre1995hauteurs}. For $n\geq 6$ it is expected to be positive as soon as $X(K_\nu) \neq \emptyset$ for all completions $K_\nu$ of $K$, or in other words, it reflects that $X$ is expected to satisfy the Hasse principle.  A key feature of the circle method is that when it provides an asymptotic formula, it automatically confirms the Hasse principle. So in particular, thanks to Hooley~\cite{Hooley+1988+32+98}, we know that the Hasse principle holds for non-singular cubic forms in $n\geq 9$ variables over $\q$ and the work of Browning--Vishe establishes the Hasse principle for non-singular cubic forms over $\F_q(t)$ in at least $8$ variables. 

In fact, by imposing further congruence conditions on $\bm{x}$ in the definition of $N(P)$ in~\eqref{Eq:DefN(P)} Browning--Vishe show that $X$ satisfies weak approximation, which means that under the diagonal embedding
\[
X(K)\longrightarrow \prod_{\nu}X(K_\nu)
\]
the image of $X(K)$ is dense with respect to the product topology. 
Using Theorem~\ref{Th:TheTheorem} as a mean value estimate for the minor arc contribution, we can apply a classical version of the circle method to draw the same conclusions for diagonal cubic forms in $n\geq 7$ variables. 
\begin{theorem}\label{Th:WeakApprox}

Let $K=\F_q(t)$ with $\cha(K)> 3$ and $F$ be a diagonal cubic form in $n\geq 7$ variables. Then the hypersurface $X\subset \PP^{n-1}$ cut out by $F$ satisfies the Hasse principle and weak approximation.

\end{theorem}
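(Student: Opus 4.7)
The plan is to apply the Hardy--Littlewood circle method over $\F_q(t)$ following the setup of Browning--Vishe~\cite{CubicHypersurfacesBV}, using Theorem~\ref{Th:TheTheorem} to furnish the sixth-moment estimate needed on the minor arcs. To obtain weak approximation simultaneously with the Hasse principle, I would fix a finite set $S$ of places of $K$ together with non-empty open sets $U_v\subseteq X(K_v)$ for $v\in S$, and count integral $\bm{x}\in\O^n$ satisfying $|\bm{x}|<|P|$, $F(\bm{x})=0$, and $\bm{x}\in U_v$ for every $v\in S$. The finite places can be handled by imposing a congruence condition $\bm{x}\equiv\bm{a}\pmod{R}$, while the infinite place is handled by inserting a smooth weight $w$ supported in a prescribed box. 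A Fourier expansion on $K_\infty$ represents the resulting counting function as
\[
N_U(P)=\int_{\T}S(\alpha)\,d\alpha,\qquad S(\alpha)=\prod_{i=1}^n S_i(\alpha F_i),
\]
where the product decomposition arises from the diagonal shape of $F$.

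Decomposing $\T=\mathfrak{M}\sqcup\mathfrak{m}$ in the standard way, the major arc analysis on $\mathfrak{M}$ is essentially a repetition of that in~\cite{CubicHypersurfacesBV}, producing a main term of the form $c_U|P|^{n-3}$ with $c_U$ factoring as a singular integral times a singular series expressible as a product of local densities. The constraint $\bm{x}\equiv\bm{a}\pmod{R}$ and the shape of $w$ affect only these local factors, and since $X$ is non-singular (so Hensel's lemma applies without trouble when $\cha(K)>3$), each local density is strictly positive as soon as the $U_v$ are small. Consequently $c_U>0$, and, modulo the minor arcs, $X(K)$ is dense in $\prod_{v\in S}U_v$; in particular, local solubility at every place forces $X(K)\neq\emptyset$, yielding the Hasse principle.

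The critical step is therefore the minor-arc estimate $\int_\mathfrak{m}|S(\alpha)|\,d\alpha\ll|P|^{n-3-\delta}$ for some $\delta>0$. Peeling off $n-6$ factors from the product,
\[
\int_\mathfrak{m}|S(\alpha)|\,d\alpha\ll\Big(\sup_{\alpha\in\mathfrak{m},\,1\leq i\leq n}|S_i(\alpha F_i)|\Big)^{n-6}\int_{\T}\prod_{i=1}^{6}|S_i(\alpha F_i)|\,d\alpha,
\]
and the last integral is $\ll|P|^{3+\varepsilon}$ after an application of H\"older to symmetrise the six factors and a change of variables $\alpha\mapsto\alpha F_i$, at which point Theorem~\ref{Th:TheTheorem} (equivalently the bound on $M(P)$) takes over. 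The supremum is controlled by the standard Weyl-type bound for cubic exponential sums over $\F_q(t)$, which, valid when $\cha(K)>3$, yields $\sup_\mathfrak{m}|S_i(\alpha F_i)|\ll|P|^{1-\sigma+\varepsilon}$ for some $\sigma>0$. Combining the two gives $\int_\mathfrak{m}|S(\alpha)|\,d\alpha\ll|P|^{n-3-(n-6)\sigma+\varepsilon}$, which is strictly smaller than $|P|^{n-3}$ as soon as $n\geq 7$.

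The chief obstacle is largely bookkeeping: the weight $w$, the congruence $\bm{x}\equiv\bm{a}\pmod{R}$, and the major-arc dissection have to be calibrated so that the major-arc asymptotic, the Weyl estimate on $\mathfrak{m}$, and the sixth-moment bound all fit together into a single asymptotic for $N_U(P)$. This is standard function-field technology once Theorem~\ref{Th:TheTheorem} is available; the constraint $\cha(K)>3$ enters precisely at the Weyl step, which accounts for the characteristic hypothesis in Theorem~\ref{Th:WeakApprox}.
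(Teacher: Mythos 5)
Your proposal follows essentially the same route as the paper: set up a weighted counting function with a congruence condition and archimedean weight, factorize the exponential sum using the diagonal structure, peel off $n-6$ factors on the minor arcs via a Weyl-type estimate (which is where $\cha(K)>3$ enters), bound the remaining sixth moment by H\"older plus Theorem~\ref{Th:TheTheorem}, and invoke a standard major-arc treatment for the main term. The paper's execution quotes Lee's thesis~\cite{lee2011birch} for the major-arc asymptotics and Weyl inequality rather than reworking Browning--Vishe from scratch, but the substance of the argument is the one you describe.
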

One reason for being able to deal with fewer variables than Browning--Vishe is that when $F$ is diagonal we have better control over the exponential sums involved and that we get stronger estimates for the density of solutions of bounded height of the dual form $F^*$ of $F$. However, this alone along with the estimates by Browning--Vishe on averages of exponential sums would not be sufficient to prove Theorem~\ref{Th:TheTheorem}--\ref{Th:WeakApprox}. We additionally make use of slightly better estimates through an argument that enables us to bypass the lack of a convenient form of partial summation over $K$. 

It should be noted that the Hasse principle over $K=\F_q(t)$ is an easy consequence of the Lang--Tsen theory of $C_i$ fields for $n\geq 10$, which in fact establishes that $X(K)\neq \emptyset$ in this case. For smaller values of $n$, only little is known about the Hasse principle or weak approximation over $\F_q(t)$. Colliot-Th\'el\`ene~\cite{colliot2003points} has established the Hasse principle for diagonal cubic forms in $n\geq 5$ variables when $q\equiv 2 \mod 3$ and for $n=4$ for the same range of $q$ under some additional combinatorial constraints on the coefficients of $F$. Furthermore, for arbitrary non-singular cubic hypersurfaces $X\subset \PP^{n-1}$ Tian~\cite{tian2017hasse} has shown that the Hasse principle holds when $\cha(K)>5$ and $n\geq 6$. Assuming the existence of a rational point, Tian--Zhang~\cite{tianWa} have also verified  that $X$ satisfies weak approximation at places of good reduction whose residue fields have at least 11 elements as soon as $n\geq 4$. In fact, the results by Colliot-Th\'el\`ene, Tian and Tian--Zhang were all shown to hold for any global function field $K$ of a smooth curve over a finite field. \\

As a further application of Theorem~\ref{Th:TheTheorem}, we are able to improve Waring's problem over $\F_q(t)$ for cubes.  Waring's problem in degree $d$ in this context is concerned with finding the smallest value of $n$ such that  
\[
P=x_1^d+\cdots +x_n^d
\]
has a solution in $\bm{x}\in \O^n$ for every $P\in\O$ with sufficiently large degree. Over $\F_q(t)$, in contrast to the integer setting, there might be global obstructions for $P$ to be representable as a sum of $d$-th powers, for example if its leading coefficient is not a sum of $n$ $d$-th powers in $\F_q$. Therefore, one usually restricts to $P\in\mathbb{J}_q^d[t]$, which is defined as the additive closure of $d$-th powers in $\F_q[t]$. In order to avoid cancellation in the $x_i$ variables coming from the terms of degree larger than $\deg P$, it is more natural to consider the \emph{strict Waring problem}. There, one is concerned with finding the minimal number $G_q(d) = n$ such that every sufficiently large polynomial $P\in\mathbb{J}_q^d[t]$ can be written as 
\[
P=x_1^d+\cdots +x_n^d,
\]
where $\deg x_i \leq \left\lceil \frac{\deg P}{d} \right\rceil$. In order to study a more refined version of Waring's problem, we introduce the quantity $\widetilde{G}_q(d)$, which is the smallest number $n$ such that we obtain an asymptotic formula for
\[
R_{n}(P)=\#\{\bm{x}\in\O^n\colon |\bm{x}|\leq q^{\left\lceil \frac{\deg(P)}{d}\right\rceil}, \, x_1^d+\cdots + x_n^d=P\},
\]
for $P\in \mathbb{J}_q^d[t]$ as $\deg(P)\to\infty$. 
In his PhD thesis~\cite{Kubota1974} Kubota tackled the asymptotic strict Waring problem over $\F_q(t)$ and showed
$\widetilde{G}_q(d) \leq 2^d+1$ whenever $\cha(\F_q) >d$. The restriction in Kubota's work on the characteristic comes from Weyl differencing, producing a factor of $d!$ and hence rendering trivial bounds when estimating exponential sums if $\cha(\F_q)\leq d$. For degrees $d\geq 4$ this was improved by Liu--Wooley~\cite{LiuWooley2010} by replacing Weyl differencing with an application of the large sieve to also obtain results for $\cha(\F_q)\leq d$.

Returning to the case of cubes, in characteristic $2$ the current state of the art is due to Car--Cherly~\cite{car-cherly} who showed $\widetilde{G}_{2^h}(3) \leq 11$. They managed to avoid Weyl differencing with an application of Poisson summation along with a version of Weyl's inequality in characteristic $2$ developed in~\cite{car1992sommes}. 

Further, work by Gallardo~\cite{gallardo2000restricted} and Car--Gallardo~\cite{car-gallardo} shows
\[
G_q(3) \leq \begin{cases}
7, \quad &\text{if $q \notin \{7,13,16\}$} \\
8, &\text{if $q \in \{13,16\}$} \\
9, &\text{if $q = 7$.}
\end{cases}
\]
Rather than using a circle method approach, the last set of bounds are achieved using elementary arguments. As a result these methods do not produce an asymptotic formula, hence do not yield new bounds for $\widetilde{G}_q(3)$.

We can again use Theorem~\ref{Th:TheTheorem} as a minor arc mean value estimate in order to improve the current best known bound for $\widetilde{G}_q(3)$ for any $q$ not divisible by $3$ as well as for $G_7(3)$, $G_{13}(3)$ and $G_{16}(3)$. Our work on Waring's problem for cubes constitutes a significant improvement on the current state of the art. In particular, our result improves the previously best known upper bound of $\widetilde{G}_q(3)$ by $4$ variables if $q$ is even and by $2$ variables if $q$ is odd.
\begin{theorem}\label{Th:Waring}
If $\cha (\mathbb{F}_q) \neq 3$, then we have $\widetilde{G}_q(3) \leq 7$ and thus also $G_q(3) \leq 7$. 
\end{theorem}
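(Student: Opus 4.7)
The plan is to establish Theorem~\ref{Th:Waring} via a function field Hardy--Littlewood circle method, using Theorem~\ref{Th:TheTheorem} as the crucial sixth moment estimate that enables pushing the number of variables down to seven. Set $X=q^{\lceil \deg P/3\rceil}$, so that $X\asymp |P|^{1/3}$, and let $S(\alpha)=\sum_{|x|\leq X}\psi(\alpha x^3)$ for a fixed nontrivial additive character $\psi$ on the appropriate torus $\T$. By orthogonality
\[
R_7(P)=\int_{\T} S(\alpha)^7 \psi(-\alpha P)\,d\alpha,
\]
and I would split $\T=\mathfrak{M}\sqcup \mathfrak{m}$ into major and minor arcs via Dirichlet approximation over $\F_q(t)$.

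On the major arcs, the standard analysis approximates $S(\alpha)$ by a Gauss-type sum multiplied by an oscillatory integral and yields, after the usual manipulations, a main term of the shape $\mathfrak{S}(P)\mathfrak{J}(P)X^4$, where $\mathfrak{S}(P)$ is the singular series and $\mathfrak{J}(P)$ the singular integral. For $P\in\mathbb{J}_q^3[t]$ of sufficiently large degree one has $\mathfrak{S}(P)\gg 1$ and $\mathfrak{J}(P)\gg 1$: each non-Archimedean local factor is bounded below because a diagonal cubic in seven variables represents every residue class when $\cha(\F_q)\neq 3$, while the Archimedean factor is controlled by the very definition of $\mathbb{J}_q^3[t]$ combined with Waring representability in $\F_q$. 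This yields a main term of order $|P|^{4/3}$.

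For the minor arcs, Theorem~\ref{Th:TheTheorem} provides
\[
\int_{\T} |S(\alpha)|^6\,d\alpha \ll X^{3+\varepsilon}\ll |P|^{1+\varepsilon}.
\]
Combined with a pointwise Weyl-type bound $\sup_{\alpha\in\mathfrak{m}}|S(\alpha)|\ll X^{1-\delta}$ for some $\delta>0$, the minor arc contribution to $R_7(P)$ is at most
\[
\sup_{\alpha\in\mathfrak{m}}|S(\alpha)|\cdot \int_{\T}|S(\alpha)|^6\,d\alpha \ll X^{4-\delta+\varepsilon}\ll |P|^{4/3-\delta/3+\varepsilon},
\]
which is of strictly smaller order than the predicted main term. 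An asymptotic formula for $R_7(P)$ then follows, yielding $\widetilde{G}_q(3)\leq 7$ and hence $G_q(3)\leq 7$.

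The principal obstacle is producing the Weyl-type minor arc bound uniformly in $\cha(\F_q)$. In characteristic greater than three this is classical: two iterations of Weyl differencing applied to $x^3$ reduce matters to a linear exponential sum in $x$. In characteristic two, however, the differencing procedure collapses, since
\[
(x+h')^2 h+(x+h')h^2 - x^2 h - x h^2 = (h')^2 h + h' h^2
\]
no longer depends on $x$. To circumvent this I would import the Poisson-summation-based Weyl inequality developed by Car~\cite{car1992sommes} and exploited in Car--Cherly~\cite{car-cherly} precisely in this setting, which delivers a nontrivial bound for $|S(\alpha)|$ on the minor arcs also when $\cha(\F_q)=2$, amply sufficient for the argument above.
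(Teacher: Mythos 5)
Your proposal follows essentially the same path as the paper: orthogonality plus a major/minor arc dissection, Theorem~\ref{Th:TheTheorem} as the sixth moment $\int_{\TT}|T(\alpha)|^6\,d\alpha \ll \widehat{B}^{3+\varepsilon}$ on the minor arcs, a pointwise Weyl bound (the classical one for $\cha>3$, Car's Poisson-summation variant for $\cha=2$) to supply the remaining factor, and positivity of the singular series and integral for the main term. The paper handles the major arcs by invoking Kubota's Theorem~30 (as extended by Liu--Wooley to $n\ge 7$) rather than re-deriving the local densities, but the overall structure and every key input are identical to yours.
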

This theorem is the function field counterpart of a result by Hooley~\cite{hooley1986waring}, who proved the asymptotic Waring problem for cubes over integers in $n \geq 7$ variables conditional on hypotheses on certain Hasse--Weil $L$-functions. We also obtain a power saving error term in the asymptotic formula for $R_n(P)$. The best unconditional result in the integer setting is due to Vaughan~\cite{Vaughan86_cubic_waring}, who resolved the asymptotic Waring problem for cubes in $8$ variables, although he obtained only log savings in the error term.

To deduce Theorem~\ref{Th:Waring} from Theorem~\ref{Th:TheTheorem}, we require a power saving when estimating a certain Weyl sum. For Waring's problem this has been carried out by Car~\cite{car1992sommes}, which allows us to establish Theorem~\ref{Th:Waring} in characteristic 2. 
Although it would be possible to adapt the work of Car adequately to handle the Weyl sums appearing in the treatment of weak approximation and thus extend Theorem~\ref{Th:WeakApprox} to the case $\cha(K)=2$, we have decided against including such an adaption here given the length of our paper . 

While the techniques used to prove Theorems~\ref{Th:TheTheorem} --~\ref{Th:Waring} are not applicable when $\cha (K) = 3$, one can almost trivially deal with the problems directly. In fact, studying the solutions to the diagonal cubic equation~\eqref{Eq:DefinitionDiagonalCubic} reduces to solving a system of linear equations. In particular, the Hasse principle and weak approximation hold trivially. Further it is easy to see that $\widetilde{G}_q(3) = 1$ holds when $\cha (K) = 3$.

\subsection*{Outline}
To prove Theorem~\ref{Th:TheTheorem} and Theorem~\ref{Th:TheoTheorem.n=4} we employ a technique known as the \emph{delta method} over $\F_q(t)$ developed by Browning--Vishe~\cite{CubicHypersurfacesBV}, but which is much simpler than the version of Heath-Brown~\cite{DiagonalCubicHB} invoked over the integers. The starting point of the delta method is a smooth decomposition of the Kronecker delta function, a technique that goes back to Duke--Friedlander--Iwaniec~\cite{duke1993bounds}. Over $\F_q(t)$, indicator functions of intervals are smooth in an appropriate sense and so this decomposition is essentially rendered trivial.

In Section~\ref{Se: FFBackground}, we begin by reviewing some essential facts that are required to perform the analysis and arrive at an expression of the form 
\[
    N(w,P)=|P|^n\sum_{\substack{r\text{ monic}\\ |r|\leq \widehat{Q}}}|r|^{-n}\sum_{\bm{c}\in \O^n}S_r(\bm{c})I_r(\bm{c}),
\]
for a weighted version of the main counting function, involving certain exponential sums $S_r(\bm{c})$ and oscillatory integrals $I_r(\bm{c})$.

In Sections~\ref{Se:IntegralEstimates} and~\ref{Se:ExpSums}, we estimate the integrals $I_r(\bm{c})$ and the exponential sums $S_r(\bm{c})$, respectively. More precisely, we obtain cancellations when averaging $S_r(\bm{c})$ over $r$  giving essentially optimal bounds. 
These estimates are possible due to work by Deligne~\cite{deligne1980conjecture} and the required analysis of the relevant $L$-functions has been carried out in~\cite[Section 3]{CubicHypersurfacesBV}. The quality of the estimates of the exponential sums is connected to the dual form of the cubic form. This prompts us to study its rational solutions in Section~\ref{Se: DualForm}.

Classically, to combine these estimates one would use partial summation, a tool that is not available in a useful form to us in the function field setting. In~\cite{CubicHypersurfacesBV} this causes significant difficulty, and in fact the approach by Browning--Vishe comes with a slight loss in the estimates rendering them insufficient for our purposes. We can resolve this issue with Lemma~\ref{Le: I_r(theta,c) deps on |r|}, where we show that $I_r(\bm{c})$ only depends on the absolute value of $r$ and so via $q$-adic summation we can separate the quantities without any loss.

In Section~\ref{Se:ActivationDelta}, we combine the estimates using this new approach and finish our treatment in the case $n=6$, thereby proving Theorem~\ref{Th:TheTheorem}. In the case $\cha(K)=2$, it turns out that the dual form $F^*$ of $F$ is again a non-singular cubic form. For this reason, in Section~\ref{subsec:E_2(P)}, we can introduce a self-improving process in the proof of Theorem~\ref{Th:TheTheorem} and the second part of Theorem~\ref{Th:TheoTheorem.n=4} that turns any saving into the desired upper bound. Finally, we use Theorem~\ref{Th:TheTheorem} as a mean value estimate in an application of the classical circle method to deal with the asymptotic Waring's problem for cubes and weak approximation for diagonal cubic hypersurfaces in $n\geq 7$ variables in Section~\ref{Se:WaringWA}.

If $n=4$ and $\cha(K)>3$ we need to deal separately with the terms coming from \emph{special solutions} of the dual form. This is the content of Section~\ref{Se:SpecialSolutions}, where we show that these terms correspond to points coming from rational lines on $X$. 
\subsection*{Conventions} The letter $\varepsilon$ will always denote an arbitrarily small positive real number, whose value might change from one line to the next. All of the implied constants throughout the paper are allowed to depend on $\varepsilon$, the cardinality of the constant field $q$ and on the form $F$. 

\subsection*{Acknowledgements} The authors would like to thank Tim Browning for suggesting this project. Further they are grateful for his and Damaris Schindler's helpful comments. We would also like to thank Efthymios Sofos for bringing Davenport's question to our attention and Keith Matthews for providing us with scanned copies of the original correspondence.

\section{Function field background}\label{Se: FFBackground}
In this section we collect some basic facts concerning analysis over function fields. A more detailed summary can be found in~\cite[Chapter 5]{Browning2021}. 
Let $K=\F_q(t)$ with ring of integers $\O=\F_q[t]$ and $K_\infty=\F_q((t^{-1}))$ be the field of Laurent series in $t^{-1}$. For $M\in\r$, we shall write $\widehat{M}\coloneqq q^M$. Any $\alpha \in K_\infty\setminus\{0\}$ can be written uniquely as 
\begin{equation}\label{Eq: LaurentSeriesRepresentation}
\alpha = \sum_{i\leq M}\alpha_it^i,\quad \alpha_M\neq 0,
\end{equation}
for some $M\in \z$. If we set $|\alpha|\coloneqq \widehat{M}$, then $|\cdot|$ naturally extends the absolute value induced by $t^{-1}$ on $K$ to $K_\infty$. We also note that $K_\infty$ is the completion of $K$ with respect to this absolute value. The analogue of the unit interval in $K_\infty$ is given by 
\[
\TT\coloneqq \{\alpha\in K_\infty\colon |\alpha|<1\}.
\]
In fact, $K_\infty$ is a local field and thus can be endowed with a unique Haar measure $\dd \alpha $ such that $\int_{\TT}\dd\alpha=1$. We can extend the absolute value to $K^n_\infty$ by $|\bm{\alpha}|=\max_{i=1,\dots, n}|\alpha_i|$ and the Haar measure by $\dd \bm{\alpha}=\dd \alpha_1 \cdots \dd\alpha_n$ for $\bm{\alpha}=(\alpha_1,\dots, \alpha_n)\in K^n_\infty$.\\

Just like over the rational numbers, Dirichlet's approximation Theorem holds. That is, for any $\alpha \in \TT$ and $Q \in \mathbb{N}$ there exist polynomials $a, r \in \O$ with $r$ monic such that $(a,r)=1$ and $|a|<|r|\leq \widehat{Q}$ satisfying
\[
\left\lvert \alpha - \frac{a}{r} \right\rvert < \frac{1}{\lvert r \rvert\widehat{Q}}.
\]
In fact, from the ultrametric property it follows that Dirichlet's approximation Theorem is already enough to obtain for any $Q\geq 1$ an analogue of a \emph{Farey dissection} of the unit interval: 
\begin{equation}\label{Eq: DirichletDissection}
\TT=\bigsqcup_{\substack{|r|\leq \widehat{Q}\\r \text{ monic} }}\bigsqcup_{\substack{|a|<|r|\\(a,r)=1}}\{\alpha\in\TT\colon |r \alpha-a|<\widehat{Q}^{-1}\},
\end{equation}
where $a,r\in \O$.\\

\noindent\textbf{Characters.} For $\alpha \in K_\infty$ given by~\eqref{Eq: LaurentSeriesRepresentation}, we define 
\[
\psi\colon K_\infty\to\cx^\times, \quad \psi(\alpha)=e\left(\frac{\Tr_{\F_q/\F_p}(\alpha_{-1})}{p}\right),
\]
and set $\psi(0)=1$, where as usual we write $e(x)=\exp(2\pi i x)$ for $x \in \mathbb{R}$. It is easy to see that $\psi$ is a non-trivial additive character of $K_\infty$ that satisfies for $x\in K_\infty$ and $N\in\z_{\geq 0}$,
\begin{equation}\label{Eq: OrthogonalityOfCharacters}
    \int_{|\alpha|<\widehat{N}^{-1}}\psi(\alpha x)\dd\alpha=\begin{cases}\widehat{N}^{-1} &\text{if }|x|<\widehat{N},\\ 0 &\text{otherwise.}\end{cases}
\end{equation}
In particular, if $x \in \O$ then this implies
\[
  \int_{\mathbb{T}}\psi(\alpha x)\dd\alpha=\begin{cases} 1 &\text{if }x=0,\\ 0
  &\text{otherwise.}\end{cases}
\]

Further, we will make frequent use of the following formulae for exponential sums. If $r,a \in \O$ are such that $r \neq 0$, then
\[
\frac{1}{\lvert r \rvert} \sum_{\lvert x \rvert < \lvert r \rvert} \psi \left( \frac{ax}{r} \right) = \begin{cases}
1 \quad &\text{if $r \mid a$,} \\
0 &\text{otherwise.}
\end{cases}
\]
We also obtain the expected outcome for Ramanujan sums of prime powers. Let $a, \varpi \in \O$ be such that $\varpi$ is prime and let $k \geq 1$ be a natural number. Then we have
\[
\sideset{}{'}\sum_{\lvert x \rvert < \lvert \varpi \rvert^k} \psi \left( \frac{ax}{\varpi^k} \right) = \begin{cases}
0 \quad &\text{if $\varpi^{k-1} \nmid a$,} \\
-\lvert \varpi \rvert^{k-1} &\text{if $\varpi^{k-1} \parallel a$,} \\
\lvert \varpi \rvert^{k-1} (\lvert \varpi \rvert -1) &\text{if $\varpi^k \mid a$,}
\end{cases}
\]
where the notation $\sideset{}{'}\sum_{\lvert x \rvert < \lvert \varpi \rvert^k}$ indicates that the sum runs over $x$ which are coprime to $\varpi$.

\noindent\textbf{Poisson Summation.} We call a function $w \colon K_\infty^n \rightarrow \mathbb{C}$ \emph{smooth} if it is locally constant. Denote by $S(K_\infty^n)$ the space of all smooth functions $w \colon K_\infty^n \rightarrow \mathbb{C}$ with compact support. If  $w \in S(K_\infty^n)$ then we call $w$ a  \emph{Schwarz-Bruhat function}. For such functions the Poisson summation formula~\cite[Lemma 2.1]{CubicHypersurfacesBV} holds. 
\begin{lemma}
    Let $f \in K_\infty[x_1, \hdots, x_n]$ and let $w \in S(K_\infty^n)$. Then  we have
    \begin{equation} \label{eq.poisson summation}
        \sum_{\bm{z} \in \O^n} w(\bm{z}) \psi(f(\bm{z})) = \sum_{\bm{c} \in \O^n} \int_{K_\infty^n} w(\bm{u}) \psi(f(\bm{u}) + \bm{c} \cdot \bm{u}) \dd \bm{u}.
    \end{equation}
\end{lemma}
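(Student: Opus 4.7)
The plan is to reduce the identity to the standard Poisson summation formula for Schwarz--Bruhat functions on $K_\infty^n$, applied to the modulated weight $g(\bm{u}) := w(\bm{u})\psi(f(\bm{u}))$. First I would verify that $g$ is itself Schwarz--Bruhat: compact support is inherited from $w$, and local constancy of $\psi \circ f$ on the support of $w$ follows because $\psi$ depends only on the coefficient of $t^{-1}$ of its argument, and on any bounded set a sufficiently small perturbation of $\bm{u}$ alters $f(\bm{u})$ only in coefficients of $t^j$ with $j$ very negative. Thus $g$ is smooth and compactly supported, hence Schwarz--Bruhat.

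Granting the Poisson summation formula $\sum_{\bm{z} \in \O^n} g(\bm{z}) = \sum_{\bm{c} \in \O^n} \widehat{g}(\bm{c})$ with $\widehat{g}(\bm{c}) = \int_{K_\infty^n} g(\bm{u}) \psi(\bm{c} \cdot \bm{u}) \dd \bm{u}$, substituting the definition of $g$ immediately yields the claimed identity. The key inputs needed are that $\O$ is a discrete cocompact subgroup of $K_\infty$ with fundamental domain $\TT$ of Haar measure $1$, and that the normalisation of $\psi$ makes $\O$ its own Pontryagin dual in the sense that $\psi(cz) = 1$ for all $z \in \O$ precisely when $c \in \O$. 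Both of these facts are standard and were recalled in the preceding discussion.

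To prove the Poisson formula itself I would form the periodisation $h(\bm{\alpha}) := \sum_{\bm{z} \in \O^n} g(\bm{z} + \bm{\alpha})$ for $\bm{\alpha} \in \TT^n \cong K_\infty^n / \O^n$. Since $g$ has compact support the sum is finite for each $\bm{\alpha}$, and $h$ inherits local constancy from $g$, so $h$ is constant on cosets of some open subgroup $(t^{-M}\TT / \TT)^n \subset \TT^n$ and therefore admits a \emph{finite} Fourier expansion in the characters $\bm{\alpha} \mapsto \psi(\bm{c} \cdot \bm{\alpha})$ of $\TT^n$ indexed by $\bm{c} \in \O^n$. Unfolding the integrals defining the Fourier coefficients and using that $\psi(\bm{c} \cdot \bm{z}) = 1$ for $\bm{c}, \bm{z} \in \O^n$ identifies them with $\widehat{g}(-\bm{c})$; specialising the expansion at $\bm{\alpha} = \bm{0}$ and re-indexing $\bm{c} \mapsto -\bm{c}$ produces the equality of sums.

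The trickiest bookkeeping step is verifying the local constancy of $\psi \circ f$ on the support of $w$: this amounts to showing that the $t^{-1}$-coefficient of $f(\bm{u})$ is insensitive to small perturbations of $\bm{u}$, which rests on the ultrametric inequality in $K_\infty$ together with the fact that $f$ involves only finitely many monomials. Once this is established, every further sum in the argument is finite, so no analytic convergence issue arises and the identity follows from elementary Fourier analysis on the finite abelian group $(t^{-M}\TT / \TT)^n$.
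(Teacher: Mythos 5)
Your proof is correct. The paper does not reprove this lemma; it simply cites~\cite[Lemma~2.1]{CubicHypersurfacesBV}, so there is no internal proof to compare against. What you give is the classical periodisation argument, and all the steps check out: $g(\bm{u}) = w(\bm{u})\psi(f(\bm{u}))$ is genuinely Schwarz--Bruhat because on the compact $\supp(w)$ the ultrametric forces $\lvert f(\bm{u}')-f(\bm{u})\rvert < q^{-1}$ for $\lvert \bm{u}'-\bm{u}\rvert$ small enough, which leaves the $t^{-1}$-coefficient of $f$ (the only thing $\psi$ sees) unaltered; the periodisation $h$ of $g$ over $\O^n$ is then locally constant on $\TT^n \cong K_\infty^n/\O^n$, hence factors through a finite quotient $(\TT/t^{-M}\TT)^n$ and so has a \emph{finite} Fourier expansion in the characters $\psi(\bm{c}\cdot\bm{\alpha})$, $\bm{c}\in\O^n$; unfolding the Fourier coefficients and evaluating at $\bm{\alpha}=\bm{0}$ gives the identity, with the re-indexing $\bm{c}\mapsto-\bm{c}$ disposing of the sign coming from the conjugate in the Fourier coefficient. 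The self-duality input — that $\psi(c z)=1$ for all $z\in\O$ precisely when $c\in\O$ — uses nondegeneracy of $\Tr_{\F_q/\F_p}$, which is implicit in your appeal to the paper's normalisation of $\psi$ but is the one place a reader might want a sentence spelling this out. Nothing is missing; the exposition is essentially the standard proof of Poisson summation over a nonarchimedean local field with the lattice $\O^n$.
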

\noindent\textbf{Delta method.} Given a polynomial $F\in\O[x_1,\dots,x_n]$ and $w\in S(K^n_\infty)$, we are interested in the counting function 
\begin{equation*}
    N(w,P)=\sum_{\substack{\bm{x}\in \O^n\\ F(\bm{x})=0}}w\left(\frac{\bm{x}}{P}\right).
\end{equation*}
For estimating the integrals appearing in our work, it is necessary to work with such a weighted counting function, since we require $\nabla F$ to be bounded away from 0 on $\supp(w)$. To estimate our original counting function defined in~\eqref{Eq:DefN(P)}, it suffices to take $w$ to be the characteristic function of the set $\{\bm{x} \in \TT\colon |\bm{x}|=q^{-1}\}$. Indeed, it follows that
\[
N(w,P)=\# \{\bm{x}\in \O^n\colon F(\bm{x})=0, |\bm{x}|=q^{-1}|P|\},
\]
so that an upper bound of the shape $N(P,w) \ll \lvert P\rvert^{k}$
yields $N(P) \ll \lvert P\rvert^{k+\varepsilon}$ for any $\varepsilon > 0$ by summing over $q$-adic ranges for $|P|$. \\

For a fixed parameter $Q\geq 1$ to be specified later, we deduce from~\eqref{Eq: DirichletDissection} and ~\eqref{Eq: OrthogonalityOfCharacters} the identity
\[
N(w,P)=\sum_{\substack{r\text{ monic}\\ |r|\leq \widehat{Q}}}\sideset{}{'}\sum_{|a|<|r|}\int_{|\theta|<|r|^{-1}\widehat{Q}^{-1}}S(a/r+\theta)\dd \theta,
\]
where $\sum'_{|a|<|r|} $ means that we sum over $a\in\O$ with $(a,r)=1$ only and
\[
S(\alpha)=\sum_{\substack{\bm{x}\in\O^n}}\psi(\alpha F(\bm{x}))w(\bm{x}/P)
\]
for $\alpha \in \TT$. As explained in~\cite[Chapter 4]{CubicHypersurfacesBV}, since $w$ is a Schwartz-Bruhat function we can evaluate $S(\theta+a/r)$ using Poisson summation~\eqref{eq.poisson summation} to obtain
\begin{equation}\label{Eq: DeltaMethod}
    N(w,P)=|P|^n\sum_{\substack{r\text{ monic}\\ |r|\leq \widehat{Q}}}|r|^{-n}\int_{|\theta|<|r|^{-1}\widehat{Q}^{-1}}\sum_{\bm{c}\in \O^n}S_r(\bm{c})I_r(\theta,\bm{c})\dd\theta,
\end{equation}
where 
\begin{equation}\label{Eq: Definition S_r(c)}
    S_r(\bm{c})=\sideset{}{'}\sum_{|a|<|r|}\sum_{|\bm{x}|<|r|}\psi\left(\frac{aF(\bm{x})+\bm{c}\cdot \bm{x}}{r}\right)
\end{equation}
and 
\begin{equation}\label{Eq: Definition I_r(theta,c)}
    I_r(\theta,\bm{c})=\int_{K_\infty^n}w(\bm{x})\psi\left(\theta P^3F(\bm{x})+\frac{P \bm{c}\cdot \bm{x}}{r}\right)\dd \bm{x}.
\end{equation}
The expression~\eqref{Eq: DeltaMethod} is the starting point for our work and from now on we will mostly be concerned about estimating the integrals $I_r(\theta,\bm{c})$ and the sums $S_r(\bm{c})$.

\section{Integral estimates}\label{Se:IntegralEstimates}
As a preliminary lemma we note the following result on a linear change of variables, the proof of which is completely analogous to the proof of Lemma 7.4.2 in~\cite{igusa2007introduction}.
 \begin{lemma} \label{lem.change_of_variables}
 Let $R_1, \hdots, R_n \in \r$ and let $\Gamma \subset K_\infty^n$ be the region given by
 \[
 \Gamma = \{ \bm{x} \in K_\infty^n \colon \lvert x_i \rvert \leq \widehat{R_i} \}.
 \]
 Let $g \colon \Gamma \rightarrow \cx$ be a continuous function and let $M \in \GL_n(K_\infty)$. Then we have
 \[
 \int_{\Gamma} g(\bm{x}) \dd \bm{x}= \lvert \det M \rvert \int_{M \bm{\alpha} \in \Gamma} g(M\bm{\beta}) \dd \bm{\beta}.
 \]
 \end{lemma}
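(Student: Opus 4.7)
The plan is to decompose $M$ into a product of elementary matrices and verify the formula for each factor separately. Since the field $K_\infty$ admits Gaussian elimination, $\GL_n(K_\infty)$ is generated by permutation matrices, diagonal matrices with nonzero entries, and transvections of the form $I + c E_{ij}$ with $i \neq j$ and $c \in K_\infty$. The claim is multiplicative in $M$ once one works with the slightly stronger statement that for any compact $\Omega \subset K_\infty^n$, any continuous $g \colon \Omega \to \cx$, and any $N \in \GL_n(K_\infty)$ one has
\[
\int_\Omega g(\bm{x}) \dd \bm{x} = |\det N| \int_{N^{-1} \Omega} g(N \bm{\beta}) \dd \bm{\beta}.
\]
The lemma is recovered as the special case $\Omega = \Gamma$. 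Multiplicativity of this formulation follows by iteration: if the identity holds for $N_1$ and $N_2$, apply it first with $N_1$ to $g$ on $\Omega$ and then with $N_2$ to the resulting integrand on the compact set $N_1^{-1}\Omega$.

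Each of the three elementary cases is then handled by a direct calculation. Permutation matrices have $|\det N| = 1$ and merely relabel the coordinates, so the equality is immediate from Fubini. For a diagonal matrix $N = \diag(c_1, \ldots, c_n)$, the coordinate-wise substitution $x_i = c_i \beta_i$, combined with the defining property of the absolute value as the modulus of multiplication on the local field $K_\infty$, namely $\dd(cx) = |c| \dd x$, produces the factor $|c_1 \cdots c_n| = |\det N|$ after iterated integration. For a transvection $I + c E_{ij}$ with $i \neq j$, the determinant equals $1$ and the substitution $x_i = \beta_i + c \beta_j$, $x_k = \beta_k$ for $k \neq i$ is, for each fixed value of $\beta_j$, merely a translation in the variable $\beta_i$; Fubini together with translation invariance of Haar measure on $K_\infty$ then gives the required identity.

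The only real obstacle is cleanly managing the domains of integration as one iterates through the factors in the decomposition of $M$, since after the first elementary change of variables the domain is no longer a box of the form $\Gamma$. This is precisely why one formulates the auxiliary claim for arbitrary compact $\Omega$: each of the three elementary cases passes verbatim to this generality, so the iteration is uniform and the proof concludes by induction on the number of elementary factors. This is exactly the argument carried out in detail in Lemma~7.4.2 of~\cite{igusa2007introduction}, which transposes directly to the present setting since only the existence and basic properties of Haar measure on the local field $K_\infty$ are used.
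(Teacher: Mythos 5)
Your proposal is correct and follows the same route the paper intends: the paper does not reproduce a proof but simply cites Lemma~7.4.2 of~\cite{igusa2007introduction} as "completely analogous," and your elementary-matrix decomposition (permutations, diagonals, transvections), together with the generalization to arbitrary compact domains to make the induction run, is precisely that argument.
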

For $f\in K_\infty[x_1,\dots,x_n]$, we denote by $H_f$ its height, that is, the maximum of the absolute values of its coefficients. Given $\gamma\in K_\infty$, $\bm{w}\in K_\infty^n$ and $f\in K_\infty[x_1,\dots, x_n]$,  integrals of the form
\begin{equation}
    J_f(\gamma,\bm{w})\coloneqq \int_{K_\infty^n} w(\bm{x})\psi(\gamma f(\bm{x})+\bm{w}\cdot\bm{x})\dd \bm{x}
\end{equation}
appear quite frequently in our work. We shall now collect the required estimates for them. Upon noting that $w(\bm{x})=\chi_{\TT}(\bm{x})-\chi_{t^{-1}\TT}(\bm{x})$, the next lemma follows directly from~\cite[Lemma 2.4]{CubicHypersurfacesBV}.
\begin{lemma}
\label{lem.J_f_vanishing}
Let $\gamma \in K_\infty$ and $\bm{w} \in K_\infty^n$ be such that $\lvert \bm{w} \rvert > q$ and $\lvert \bm{w} \rvert \geq H_f \lvert \gamma \rvert$. Then $J_f(\gamma,\bm{w}) = 0$.
 \end{lemma}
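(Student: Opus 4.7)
The lemma is stated as a direct consequence of \cite[Lemma 2.4]{CubicHypersurfacesBV} via the decomposition $w = \chi_\TT - \chi_{t^{-1}\TT}$, and the proof I would write is the translation-invariance argument underlying that source. The starting point is that for any $\bm{y} \in K_\infty^n$, translation invariance of the Haar measure gives
\[
J_f(\gamma,\bm{w}) = \int_{K_\infty^n} w(\bm{x}+\bm{y})\,\psi\bigl(\gamma f(\bm{x}+\bm{y}) + \bm{w} \cdot (\bm{x}+\bm{y})\bigr)\,\dd \bm{x}.
\]
Since $w = \chi_\TT - \chi_{t^{-1}\TT}$, its support is the annulus $\{|\bm{x}| = q^{-1}\}$, so the ultrametric inequality forces $w(\bm{x}+\bm{y}) = w(\bm{x})$ identically as soon as $|\bm{y}| < q^{-1}$. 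The plan is to choose $\bm{y}$ so that the identity above collapses to $J_f(\gamma,\bm{w}) = \psi(C)\,J_f(\gamma,\bm{w})$ with $\psi(C) \neq 1$, forcing $J_f(\gamma,\bm{w}) = 0$.

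To extract a constant $C$, I split $f(\bm{x}+\bm{y}) - f(\bm{x}) = (f(\bm{y}) - f(\bm{0})) + B(\bm{x},\bm{y})$, where every monomial of the mixed polynomial $B(\bm{x},\bm{y})$ carries at least one factor of $\bm{x}$ in addition to its factor of $\bm{y}$. On $\supp(w)$, where $|\bm{x}| \leq q^{-1}$, this yields the bound $|B(\bm{x},\bm{y})| \leq H_f q^{-1} |\bm{y}|$, and if additionally $|\bm{y}| \leq q^{-1} / (H_f |\gamma|)$, then $|\gamma B(\bm{x},\bm{y})| < q^{-1}$, placing $\gamma B(\bm{x},\bm{y})$ in $\ker \psi$. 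The $\bm{x}$-dependent part of the new phase therefore contributes trivially, and matters reduce to producing an admissible $\bm{y}$ with $\psi\bigl(\gamma(f(\bm{y}) - f(\bm{0})) + \bm{w} \cdot \bm{y}\bigr) \neq 1$.

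To achieve this, pick an index $k$ with $|w_k| = |\bm{w}|$ and take $\bm{y} = y_k \bm{e}_k$ with $|y_k| \leq \min\{q^{-2},\,q^{-1}/(H_f |\gamma|)\}$. The hypothesis $|\bm{w}| \geq H_f |\gamma|$ ensures that the linear term $w_k y_k$ dominates $\gamma(f(y_k \bm{e}_k) - f(\bm{0}))$ in the ultrametric sense (with any degenerate coincidence in a single coordinate handled by perturbing the direction of $\bm{y}$), so the full phase has absolute value $|\bm{w}||y_k|$. The hypothesis $|\bm{w}| > q$ is then exactly what guarantees that the admissible range for $|y_k|$ includes values with $|\bm{w}||y_k| \geq q^{-1}$; since $y_k \mapsto w_k y_k$ is a bijection on $K_\infty$, we may further tune $y_k$ so that the coefficient of $t^{-1}$ in $w_k y_k$ has nonzero trace to $\F_p$, which produces the desired $\psi$-value $\neq 1$. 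The main technical subtlety is the compatibility of the three size constraints on $|\bm{y}|$—two upper bounds and one lower bound—and this compatibility is precisely what the twin hypotheses $|\bm{w}|>q$ and $|\bm{w}| \geq H_f |\gamma|$ are designed to guarantee.
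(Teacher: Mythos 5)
Your strategy --- translation invariance of the Haar measure combined with ultrametric domination --- is precisely the mechanism behind the cited \cite[Lemma 2.4]{CubicHypersurfacesBV}, which is all the paper offers in lieu of a proof, so the route is the right one. However, the parenthetical about ``degenerate coincidence[s]'' hides a genuine gap. The hypothesis $\lvert\bm{w}\rvert \geq H_f\lvert\gamma\rvert$ permits equality, and if $f$ has a nonzero linear part it can then happen that $\gamma\nabla f(\bm{0}) + \bm{w} = \bm{0}$. In that case $\phi(\bm{y}) \coloneqq \gamma(f(\bm{y}) - f(\bm{0})) + \bm{w}\cdot\bm{y}$ has \emph{no} linear term at all, so $\psi(\phi(\bm{y})) \equiv 1$ on the admissible ball in \emph{every} direction, and perturbing the direction of $\bm{y}$ cannot rescue the claimed domination of $w_k y_k$. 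This is not a vacuous worry: with $f(\bm{x}) = x_1 + \sum_{i=1}^n x_i^3$, $\gamma = t^2$, $\bm{w} = (-t^2,0,\dots,0)$ and $q\equiv 2\pmod 3$, all the stated hypotheses hold, the phase collapses to $\psi\bigl(t^2\sum_i x_i^3\bigr)$, and a direct computation gives $J_f(\gamma,\bm{w}) = -q^{-2n}\neq 0$. So the lemma as printed implicitly assumes $f$ has no linear term (which is the case in every use in the paper, where $f$ is the cubic form $F$) or requires strictness in $\lvert\bm{w}\rvert > H_f\lvert\gamma\rvert$; your proposed perturbation does not address this.

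The correct way to close the argument is to invoke this homogeneity: for $f$ a form of degree at least $2$, $\gamma(f(\bm{y}) - f(\bm{0})) = \gamma f(\bm{y})$ is quadratic or higher in $\bm{y}$, so $\lvert\gamma f(\bm{y})\rvert \leq H_f\lvert\gamma\rvert\lvert\bm{y}\rvert^2 \leq q^{-1}\lvert\bm{y}\rvert < q^{-1}$ whenever $\lvert\bm{y}\rvert \leq q^{-1}/(H_f\lvert\gamma\rvert)$, hence $\psi(\phi(\bm{y})) = \psi(\bm{w}\cdot\bm{y})$ with no proviso. Your final step then goes through as written: with $R = \min\{q^{-2},\, q^{-1}/(H_f\lvert\gamma\rvert)\}$, the twin hypotheses $\lvert\bm{w}\rvert > q$ and $\lvert\bm{w}\rvert \geq H_f\lvert\gamma\rvert$ give $\lvert\bm{w}\rvert R \geq q^{-1}$, so the image $\{\bm{w}\cdot\bm{y} : \lvert\bm{y}\rvert\leq R\}$ contains an element on which $\psi$ is nontrivial. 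Replace the ``perturb the direction'' remark with this observation and the proof is complete.
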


The next result~\cite[Lemma 2.7]{CubicHypersurfacesBV} is the main ingredient for estimating the integrals $J_f(\gamma,\bm{w})$.

\begin{lemma} \label{lem.integral_vanishes_BV}
    We have
    \[
    \int_{\TT^n \setminus \Omega} \psi (\gamma f(\bm{x}) + \bm{w} \cdot \bm{x}) \dd \bm{x} = 0,
    \]
    where $\Omega \subset \TT^n$ is given by
    \[
    \Omega = \left\{ \bm{x}\in \TT^n \colon \lvert \gamma \nabla f(\bm{x}) + \bm{w} \rvert \leq H_f \max \left\{ 1, \lvert \gamma \rvert^{1/2} \right\} \right\}.
    \]
    
\end{lemma}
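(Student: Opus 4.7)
\emph{Plan.} This is a function field stationary phase calculation. I would set
\[
H := \bigl(H_f\,\max\{1,\,|\gamma|^{1/2}\}\bigr)^{-1},
\]
where we may assume $H_f\geq 1$ (which holds in all intended applications, since $f\in\O[\bm{x}]$), so $H\leq 1$ and $H\in q^{\z}$. Partition $\TT^n$ into the disjoint union of cosets $\bm{x}_0 + H\TT^n$, where $H\TT^n = \{\bm{h}\in K_\infty^n : |\bm{h}|<H\}$, and express the integral as the sum over these cosets.

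A preliminary step is to check that $\Omega$ is itself a union of such cosets. Expanding $\nabla f(\bm{x}_0+\bm{h})-\nabla f(\bm{x}_0)$ as a polynomial in $\bm{h}$ with no constant term, via Hasse derivatives to avoid any difficulty in positive characteristic, one sees that its coefficients have absolute value at most $H_f$. Hence for $\bm{x}=\bm{x}_0+\bm{h}$ in the coset,
\[
|\gamma(\nabla f(\bm{x})-\nabla f(\bm{x}_0))|\leq |\gamma| H_f H \leq H_f\max\{1,|\gamma|^{1/2}\},
\]
where the last bound is checked separately in the cases $|\gamma|\leq 1$ and $|\gamma|>1$ using $H_f\geq 1$. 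The ultrametric triangle inequality then forces $|\gamma\nabla f(\bm{x})+\bm{w}|$ and $|\gamma\nabla f(\bm{x}_0)+\bm{w}|$ to lie on the same side of the threshold $H_f\max\{1,|\gamma|^{1/2}\}$, so $\bm{x}_0\in\Omega\iff \bm{x}\in\Omega$.

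Next, on a coset with $\bm{x}_0\notin\Omega$, I would substitute $\bm{x}=\bm{x}_0+\bm{h}$ and expand
\[
\gamma f(\bm{x}_0+\bm{h})+\bm{w}\cdot(\bm{x}_0+\bm{h})=\gamma f(\bm{x}_0)+\bm{w}\cdot\bm{x}_0+(\gamma\nabla f(\bm{x}_0)+\bm{w})\cdot\bm{h}+\gamma R(\bm{x}_0,\bm{h}),
\]
where $R$ collects the terms of degree $\geq 2$ in $\bm{h}$ and satisfies $|R(\bm{x}_0,\bm{h})|\leq H_f|\bm{h}|^2$. The choice of $H$ yields $|\gamma R|\leq |\gamma|H_fH^2\leq 1$, and using $|\bm{h}|\leq H/q$ this improves to $|\gamma R|\leq q^{-2}$, so $\psi(\gamma R)=1$ throughout the coset. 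Consequently the integral over the coset factors as
\[
\psi(\gamma f(\bm{x}_0)+\bm{w}\cdot\bm{x}_0)\int_{|\bm{h}|<H}\psi\bigl((\gamma\nabla f(\bm{x}_0)+\bm{w})\cdot\bm{h}\bigr)\dd\bm{h},
\]
and applying \eqref{Eq: OrthogonalityOfCharacters} componentwise, the inner integral vanishes whenever $|\gamma\nabla f(\bm{x}_0)+\bm{w}|\geq H^{-1}=H_f\max\{1,|\gamma|^{1/2}\}$, which is precisely the condition $\bm{x}_0\notin\Omega$. Summing the zero contributions over all such cosets proves the lemma.

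The main obstacle is the triple role played by $H$: it must simultaneously be small enough that $\psi(\gamma R)=1$ (forcing $|\gamma|H_fH^2\leq 1$), large enough that orthogonality kills the linear term outside $\Omega$ (forcing $H^{-1}\leq H_f\max\{1,|\gamma|^{1/2}\}$), and calibrated so that the ball decomposition respects $\Omega$. The precise shape $H_f\max\{1,|\gamma|^{1/2}\}$ in the definition of $\Omega$ is exactly what is needed to reconcile all three constraints with a single value of $H$; in particular the exponent $1/2$ arises from balancing the quadratic-in-$\bm{h}$ remainder against the orthogonality threshold, and the factor $H_f$ from the height of $\nabla f$.
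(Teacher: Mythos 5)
The paper does not prove this lemma itself; it quotes~\cite[Lemma 2.7]{CubicHypersurfacesBV} verbatim and refers the reader to Browning--Vishe for a proof. Your argument --- partition $\TT^n$ into cosets at scale $H\asymp (H_f\max\{1,|\gamma|^{1/2}\})^{-1}$, check that $\Omega$ is a union of such cosets, Taylor-expand the phase and observe that the quadratic-and-higher remainder lies deep enough in $\TT$ to be annihilated by $\psi$, then kill the linear phase on each coset outside $\Omega$ by orthogonality of characters --- is exactly the function field stationary-phase argument that underlies their proof, so conceptually you have reconstructed it.

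Two points of rigour deserve attention. First, as written your $H=(H_f\max\{1,|\gamma|^{1/2}\})^{-1}$ need not lie in $q^{\z}$: when $\lvert\gamma\rvert$ has odd $q$-adic valuation, $\lvert\gamma\rvert^{1/2}\notin q^{\z}$, and then $H\TT^n$ is not a bona fide coset decomposition. You must replace $H$ by the largest power of $q$ not exceeding $(H_f\max\{1,|\gamma|^{1/2}\})^{-1}$. This works, but the three inequalities that you correctly identify as competing (the $|\gamma R|\leq q^{-2}$ bound, the orthogonality threshold $H^{-1}\leq q\cdot(\text{threshold defining }\Omega)$, and the coset-compatibility of $\Omega$) each pick up or lose a factor of $q$ in the rounding, and one should verify that they still hold simultaneously; they do, precisely because the threshold $H_f\max\{1,|\gamma|^{1/2}\}$ in $\Omega$ can itself be rounded down to a power of $q$ without changing $\Omega$ (since $|\gamma\nabla f(\bm{x})+\bm{w}|$ only takes values in $q^{\z}\cup\{0\}$). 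Second, your reduction to $H_f\geq 1$ is indeed necessary for the proof and is correctly flagged; note however that in the paper's application in Lemma~\ref{lem.J_F_estimate} the lemma is applied to $G_{\bm{a}}(\bm{y})=F((\bm{y}+\bm{a})t^{-1})$, whose height is $H_F/q^3$ and may well be $<1$, so one either needs the stronger assumption built into the cited Browning--Vishe statement or an additional normalisation step there. Neither point undermines your argument, but a polished proof should dispose of both explicitly.
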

In our setting, this leads to the following estimate.
\begin{lemma} \label{lem.J_F_estimate}
Suppose $F\in K_\infty[x_1,\dots, x_n]$ is a non-singular cubic form. Let $\gamma \in K_\infty$ and $\bm{w} \in K_\infty^n\setminus\{\bm{0}\}$ be such that $\lvert \bm{w} \rvert \gg 1$. Then $J_F(\gamma,\bm{w})=0$, unless 
\[
\lvert \bm{w} \rvert \ll \lvert \gamma \rvert \ll \lvert \bm{w} \rvert,
\]
in which case
\[
J_F(\gamma,\bm{w}) \ll \mathrm{meas} ( \{\bm{x} \in \mathrm{supp}(w) \colon \lvert \gamma \nabla F(\bm{x}) + \bm{w} \rvert \ll \lvert \bm{w} \rvert^{1/2} \}).
\]
\end{lemma}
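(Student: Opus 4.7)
The plan is to combine Lemma~\ref{lem.J_f_vanishing} with Lemma~\ref{lem.integral_vanishes_BV}, using the non-singularity of $F$ as the key additional input. First I would apply Lemma~\ref{lem.J_f_vanishing} with $f=F$: since $|\bm{w}|\gg 1$ we may assume $|\bm{w}|>q$, and then the lemma forces $|\bm{w}|<H_F|\gamma|$ whenever $J_F\neq 0$. As $H_F$ depends only on $F$, this gives the lower bound $|\bm{w}|\ll|\gamma|$.

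The extra ingredient needed for the remaining assertions is the pointwise estimate $|\nabla F(\bm{x})|\gg|\bm{x}|^2$, valid for all $\bm{x}\in K_\infty^n$, which follows from non-singularity of $F$ by an effective Nullstellensatz: the partials $\partial_1 F,\dots,\partial_n F$ share no common non-trivial zero, so one finds polynomials $G_{ij}$ with $\sum_j G_{ij}(\bm{x})\,\partial_j F(\bm{x})=x_i^N$ for some $N$, and the claim drops out by comparing absolute values. In particular $|\nabla F(\bm{x})|\gg q^{-2}$ throughout $\mathrm{supp}(w)=\{|\bm{x}|=q^{-1}\}$.

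Next I would write $w=\chi_{\TT^n}-\chi_{t^{-1}\TT^n}$ and decompose $J_F=I_1-I_2$ accordingly, applying Lemma~\ref{lem.integral_vanishes_BV} to $I_1$ directly and to $I_2$ after the change of variables $\bm{x}=t^{-1}\bm{y}$ supplied by Lemma~\ref{lem.change_of_variables}. The stationary-phase boxes underlying Lemma~\ref{lem.integral_vanishes_BV} have size $\asymp|\gamma|^{-1/2}$, which for $|\gamma|\gg 1$ is finer than the partition $\TT^n=\mathrm{supp}(w)\sqcup t^{-1}\TT^n$; the contributions of $I_1$ and $I_2$ on $t^{-1}\TT^n$ then cancel, and one is left with
\[
|J_F(\gamma,\bm{w})|\ll \mathrm{meas}(\Omega\cap\mathrm{supp}(w)),\quad \Omega=\{\bm{x}\in\TT^n\colon|\gamma\nabla F(\bm{x})+\bm{w}|\leq H_F\max\{1,|\gamma|^{1/2}\}\}.
\]
If now $|\gamma|\gg|\bm{w}|$, then on $\mathrm{supp}(w)$ the lower bound $|\nabla F|\gg q^{-2}$ together with the ultrametric gives $|\gamma\nabla F(\bm{x})+\bm{w}|\gg q^{-2}|\gamma|$, which exceeds $H_F|\gamma|^{1/2}$ for $|\gamma|$ sufficiently large; hence $\Omega\cap\mathrm{supp}(w)=\emptyset$ and $J_F=0$, forcing $|\gamma|\ll|\bm{w}|$. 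In the remaining range $|\gamma|\asymp|\bm{w}|$ we have $|\gamma|^{1/2}\asymp|\bm{w}|^{1/2}$, yielding the claimed estimate.

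The main technical obstacle is obtaining a bound in terms of $\mathrm{meas}(\Omega\cap\mathrm{supp}(w))$ rather than merely $\mathrm{meas}(\Omega)$: the latter could be sizable for $|\gamma|$ much larger than $|\bm{w}|$, since $\Omega$ then contains a neighbourhood of $\bm{x}=\bm{0}$ where $\nabla F$ is small. Matching the two box decompositions on $t^{-1}\TT^n$ in the applications of Lemma~\ref{lem.integral_vanishes_BV} is what excises this spurious contribution and allows the vanishing statement to be sharp.
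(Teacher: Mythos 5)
Your high-level strategy is correct --- invoke Lemma~\ref{lem.J_f_vanishing} for the lower bound $\lvert\bm{w}\rvert\ll\lvert\gamma\rvert$, use non-singularity of $F$ to force $\lvert\nabla F\rvert\gg 1$ on $\mathrm{supp}(w)$, and feed this into Lemma~\ref{lem.integral_vanishes_BV}. Your Nullstellensatz argument for $\lvert\nabla F(\bm{x})\rvert\gg\lvert\bm{x}\rvert^2$ is also sound (with $G_{ij}$ taken homogeneous of degree $N-2$), though the paper gets away with the simpler observation that $\lvert\nabla F\rvert$ is a continuous, everywhere positive function on the compact set $\mathrm{supp}(w_{\bm a})$, hence bounded below.

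However, the central step --- that after writing $w=\chi_{\TT^n}-\chi_{t^{-1}\TT^n}$ the contributions of $I_1$ and $I_2$ on $t^{-1}\TT^n$ cancel, leaving a bound by $\mathrm{meas}(\Omega\cap\mathrm{supp}(w))$ --- does not follow from a black-box application of Lemma~\ref{lem.integral_vanishes_BV}. When you substitute $\bm{x}=t^{-1}\bm{y}$ in $I_2$, the phase becomes $\psi(\gamma t^{-3}F(\bm{y})+t^{-1}\bm{w}\cdot\bm{y})$, so the lemma is applied with the rescaled $\gamma'=\gamma t^{-3}$ rather than $\gamma$. The resulting $\Omega$-set for $I_2$, transported back to $\bm{x}$-coordinates, is
\[
\bigl\{\bm{x}\in t^{-1}\TT^n\colon \lvert\gamma\nabla F(\bm{x})+\bm{w}\rvert\leq qH_F\max\{1,q^{-3/2}\lvert\gamma\rvert^{1/2}\}\bigr\},
\]
whereas the set produced for $I_1$, intersected with $t^{-1}\TT^n$, has threshold $H_F\max\{1,\lvert\gamma\rvert^{1/2}\}$. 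For $\lvert\gamma\rvert>q^3$ these thresholds differ by a factor of $q^{1/2}$, so $I_1-I_2$ retains an uncancelled contribution from the annulus
\[
\bigl\{\bm{x}\in t^{-1}\TT^n\colon q^{-1/2}H_F\lvert\gamma\rvert^{1/2}<\lvert\gamma\nabla F(\bm{x})+\bm{w}\rvert\leq H_F\lvert\gamma\rvert^{1/2}\bigr\},
\]
which lies inside $t^{-1}\TT^n$ and is not controlled by $\mathrm{meas}(\Omega\cap\mathrm{supp}(w))$. Your appeal to the ``stationary-phase boxes underlying'' the lemma having matching sizes is precisely what fails: the box scale in the two applications differs after the change of variables, and even re-opening the proof of Lemma~\ref{lem.integral_vanishes_BV} with a common box size would be a new argument rather than an application of the stated lemma.

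The paper circumvents this entirely with a finer decomposition. Instead of two terms, it writes $w=\sum_{\bm{a}\in\F_q^n\setminus\{\bm{0}\}}w_{\bm{a}}$, where $w_{\bm{a}}$ is the indicator of the single coset $\bm{a}t^{-1}+t^{-1}\TT^n$; these $q^n-1$ boxes tile $\mathrm{supp}(w)$ exactly. Each piece is transported to $\TT^n$ by the affine substitution $\bm{y}=t\bm{x}-\bm{a}$, and Lemma~\ref{lem.integral_vanishes_BV} is applied once per piece with the \emph{same} $\gamma$ but with polynomial $G_{\bm{a}}(\bm{y})=F((\bm{y}+\bm{a})t^{-1})$. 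This yields a measure bound localised on $\mathrm{supp}(w_{\bm a})$ from the start, with no need to subtract anything, and the non-singularity of $F$ plus compactness of $\mathrm{supp}(w_{\bm{a}})$ give $\lvert\nabla F\rvert\gg 1$ there. Summing over the finitely many $\bm{a}$ finishes the argument. If you want to salvage your two-term decomposition, you would have to rework the box argument of Lemma~\ref{lem.integral_vanishes_BV} by hand at a fixed box scale; the paper's per-coset decomposition is what lets you use the lemma off the shelf.
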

\begin{proof}
First note  $J_F(\gamma,\bm{w}) = 0$ if $\lvert \bm{w}\rvert > \max\{q, H_F \lvert \gamma \rvert \}$ by Lemma~\ref{lem.J_f_vanishing}. Since by assumption $1\ll |\bm{w}|$, we may thus assume $1\ll |\bm{w}|\ll |\gamma|$. For $\bm{a}\in\F_q^n\setminus\{\bm{0}\}$, let 
\[
w_{\bm{a}}(\bm{x})=\begin{cases} 1 &\text{if }|\bm{x}-\bm{a}t^{-1}|<|t|^{-1},\\
0&\text{else.}\end{cases}
\]
We can then write $w(\bm{x})=\sum_{\bm{a}\in\F_q^n\setminus\{\bm{0}\}}w_{\bm{a}}(\bm{x})$, so that
\begin{align}
\begin{split}\label{eq.J_F_indicator_boxes_swap}
J_F(\gamma,\bm{w})&=\sum_{\bm{a}\in\F_q^n\setminus\{\bm{0}\}}\int_{\TT^n}w_{\bm{a}}(\bm{x})\psi(\gamma F(\bm{x})+\bm{w}\cdot \bm{x})\dd\bm{x}\\
&= \sum_{\bm{a}\in\F_q^n\setminus\{\bm{0}\}}q^{-n}\psi(t^{-1}\bm{w}\cdot \bm{a})\int_{\TT^n}\psi(\gamma G_{\bm{a}}(\bm{y})+t^{-1}\bm{w}\cdot \bm{y})\dd \bm{y},
\end{split}
\end{align}
where we performed the change of variables $\bm{y}=t\bm{x}-\bm{a}$ and wrote $G_{\bm{a}}(\bm{y})=F((\bm{y}+\bm{a})t^{-1})$.
From Lemma~\ref{lem.integral_vanishes_BV} we deduce that each inner integral is bounded by 
\[
\text{meas}(\{\bm{y}\in\TT^n\colon |\gamma \nabla G_{\bm{a}}(\bm{y})+t^{-1}\bm{w}|\ll H_{G_{\bm{a}}}|\gamma|^{1/2}\}),
\]
which in turn may be bounded from above by
\begin{equation} \label{Eq:measureIntegral}
  \text{meas}(\{\bm{x}\in \mathrm{supp}(w_{\bm{a}})\colon |\gamma \nabla F(\bm{x})+\bm{w}|\ll H_F|\gamma|^{1/2}\}),
\end{equation}
since $H_{G_{\bm{a}}}\leq H_F$. Denote the set in~\eqref{Eq:measureIntegral} by $\Omega_{\bm{a}}$. Note that since $F$ is assumed to be non-singular, we have $\nabla F(\bm{x})\neq 0$ for all $\bm{x}\in\Omega_{\bm{a}}$. Since 
$\mathrm{supp} (w_{\bm{a}})$
is compact for every $\bm{a}$, this implies $\nabla F(\bm{x})\gg_w 1$ for all $\bm{x}\in\Omega_{\bm{a}}$. 
In particular, unless $\lvert \bm{w} \rvert \gg |\gamma\nabla F(\bm{x})| \gg \lvert \gamma \rvert$ the sets $\Omega_{\bm{a}}$ are all empty and the integral vanishes. Finally the Lemma follows upon noting 
\[
 \mathrm{meas} ( \Omega_{\bm{a}} )\ll \mathrm{meas} ( \{\bm{x} \in \mathrm{supp}(w) \colon \lvert \gamma \nabla F(\bm{x}) + \bm{w} \rvert \ll \lvert \bm{w} \rvert^{1/2} \}),
\]
for any $\bm{a} \in \Fq^n \setminus \{ \bm{0} \}$ and substituting this into~\eqref{eq.J_F_indicator_boxes_swap}.
\end{proof}
Since we work with a diagonal cubic form $F(\bm{x})=\sum_{i=1}^n F_ix_i^3$ with $F_i\in\O\setminus\{0\}$, we have $\nabla F(\bm{x}) = (3F_1x_1^2, \hdots, 3F_nx_n^2)$. Therefore in order to find an upper bound for $J_F(\gamma,\bm{w})$ the following lemma will be useful.
\begin{lemma} \label{lem.measure_parabola}
Let $a,b \in K_\infty$ and consider the set 
\[
P_{a,b} =
\{ x \in \TT \colon \lvert x^2-a \rvert < \lvert b \rvert \}.
\]
Then we have
\[
\mathrm{meas} (P_{a,b}) \ll \min \{\lvert b \rvert^{1/2}, \lvert b \rvert \lvert a \rvert^{-1/2} \}.
\]
\end{lemma}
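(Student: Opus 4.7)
The plan is to establish the two upper bounds $\mathrm{meas}(P_{a,b}) \ll |b|^{1/2}$ and $\mathrm{meas}(P_{a,b}) \ll |b| |a|^{-1/2}$ separately via a case distinction on the relative sizes of $|a|$ and $|b|$. One checks that $\min\{|b|^{1/2}, |b| |a|^{-1/2}\}$ equals $|b|^{1/2}$ precisely when $|a|\leq |b|$ and equals $|b| |a|^{-1/2}$ when $|a|>|b|$, so it suffices to obtain the appropriate bound in each regime. Throughout we may assume $P_{a,b}$ is non-empty, as otherwise the conclusion is trivial.

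For $|a| \leq |b|$, if $x \in P_{a,b}$ the ultrametric inequality forces $|x|^2 = |x^2| \leq \max\{|a|, |b|\} = |b|$, so $|x| \leq |b|^{1/2}$. Hence $P_{a,b}$ sits inside a ball of radius $|b|^{1/2}$ in $\TT$, whose Haar measure is $\ll |b|^{1/2}$, giving the desired bound in this regime.

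For $|a| > |b|$, fix any $x_0 \in P_{a,b}$. Since $|x_0^2 - a| < |b| < |a|$, the ultrametric forces $|x_0^2| = |a|$, and hence $|x_0| = |a|^{1/2}$; the same argument shows $|x| = |a|^{1/2}$ for every $x \in P_{a,b}$. For such $x$ the ultrametric gives $|x^2 - x_0^2| < |b|$, and factoring $x^2 - x_0^2 = (x - x_0)(x + x_0)$ yields $|x - x_0| \cdot |x + x_0| < |b|$. Writing $2x = (x - x_0) + (x + x_0)$ and using $\cha(K)\neq 2$, one has $|2x| = |x| = |a|^{1/2}$, so the ultrametric forces $\max\{|x - x_0|, |x + x_0|\} \geq |a|^{1/2}$. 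Consequently the other factor is strictly less than $|b|/|a|^{1/2}$, and $P_{a,b}$ is contained in the union of two balls of radius $|b|/|a|^{1/2}$ centred at $x_0$ and $-x_0$. Each ball has Haar measure $\ll |b|/|a|^{1/2}$, and the bound $\mathrm{meas}(P_{a,b}) \ll |b| |a|^{-1/2}$ follows.

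The main subtlety lies in characteristic $2$, where $x^2 - x_0^2 = (x - x_0)^2$ and the factorization argument collapses, producing only $|x - x_0| < |b|^{1/2}$. This recovers the first bound but not the sharper $|b| |a|^{-1/2}$ estimate in the range $|a| > |b|$, so the stronger half of the lemma genuinely requires $\cha(K) \neq 2$; this is consistent with the fact that the authors rely on alternative techniques (Poisson summation and the dual-form recursion) when treating the characteristic $2$ case elsewhere in the paper.
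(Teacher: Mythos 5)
Your argument is correct for $\cha(K)\neq 2$ and takes a genuinely different route from the paper's in the harder case $|a|>|b|$. The paper expands $x$ as a Laurent series, forces the leading index to be even, solves $x_{K/2}^2=a_K$ for the top coefficient up to sign, and then propagates this via the recursion $X_{K-r}=2x_{K/2}x_{K/2-r}+(\text{lower terms})$ to pin down a run of coefficients one at a time. You instead fix a single $x_0\in P_{a,b}$, factor $x^2-x_0^2=(x-x_0)(x+x_0)$, and observe that since $|2x|=|x|=|a|^{1/2}$ the ultrametric forces at least one factor to be $\geq|a|^{1/2}$, so the other is $<|b||a|^{-1/2}$; hence $P_{a,b}$ is covered by the two balls of radius $|b||a|^{-1/2}$ about $\pm x_0$. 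This is shorter and avoids the coefficient bookkeeping. Both arguments secretly invoke $\cha(K)\neq 2$ — yours through the factorization and $|2x|=|x|$, the paper's through the factor $2x_{K/2}$ in the inductive step. Your observation that the bound $|b||a|^{-1/2}$ genuinely fails in characteristic~$2$ is correct: over $\F_2(t)$ with $a=t^{-2}$ and $|b|$ small, $x^2-a=(x-t^{-1})^2$ and $P_{a,b}$ is a ball of measure $\asymp|b|^{1/2}\gg|b||a|^{-1/2}$. This is a real hidden hypothesis that the paper's statement and proof do not flag. One small correction to your closing remark: the paper still invokes this lemma (via the resulting bound~\eqref{Eq: J_F.integral.estimate} and Lemma~\ref{lem.integral_estimate_for_I_r}) in its characteristic-$2$ analysis, so the Poisson-summation and dual-form techniques there do not in fact sidestep the issue as cleanly as you suggest.
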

\begin{proof}
Note first that the result is trivial if $a = 0$ or $b = 0$. Hence we may write
\[
a = \sum_{i \leq K } a_it^i, \quad \text{and} \quad b = \sum_{j \leq M} b_jt^j, 
\]
where $a_K, b_M \neq 0$. We will proceed in two cases.

\noindent
\textbf{Case 1:} $\lvert a \rvert < \lvert b \rvert$. Then via the ultrametric triangle inequality we note
\[
\lvert x^2-a \rvert < \lvert b \rvert \iff \lvert x \rvert^2 < \lvert b \rvert,
\]
for any $x \in \TT$. Thus $\mathrm{meas} (P_{a,b}) \ll \lvert b \rvert^{1/2} = \min \{\lvert b \rvert^{1/2}, \lvert b \rvert \lvert a \rvert^{-1/2} \}$.

\noindent\textbf{Case 2:} $\lvert a \rvert \geq \lvert b \rvert$. Let $x = \sum_{i \leq -1} x_it^i \in \TT$. Then $\lvert x^2-a \rvert < \lvert b \rvert$ can only hold if $\lvert x \rvert^2 = \lvert a \rvert$. In particular $K$ must be even, $K \leq -1$ must hold and $x_{K/2+1} =  \cdots = x_{-1} = 0$. Write
\[
x^2 = \sum_{\ell\leq K} X_\ell t^\ell,
\]
where $X_\ell = \sum_{i+j = \ell} x_i x_j$. Then, requiring
\[
\lvert x^2-a \rvert < \lvert b \rvert = q^M
\]
implies $X_\ell = a_\ell$ for $\ell = M, \hdots, K$. Now $X_K = x_{K/2}^2$, so the condition $X_K = a_K$ yields at most two possible solutions for $x_{K/2}$. Further, since
\[
X_{K-r} =2x_{K/2} x_{K/2-r} +\sum_{\substack{i+j = K-r \\ K/2-r < i,j < K/2}} x_ix_j,
\]
 we find inductively that a solution to  $x_{K/2}^2 = a_K$ uniquely determines $x_{K/2-r}$ for $r = 1, \hdots, M+K$. To summarise, in this case, there are at most two possibilities for the values of the coefficients $x_{-1}, \hdots, x_{M-K/2}$. Therefore we obtain
\[
\mathrm{meas}(P_{a,b}) \ll \mathrm{meas} \left( t^{M-K/2}\TT \right) = q^{M-K/2} = \lvert b \rvert \lvert a \rvert^{-1/2}.
\]
Finally, noticing that $\lvert b \rvert \lvert a \rvert^{-1/2} \leq \lvert b \rvert^{1/2}$ if $\lvert a \rvert \geq \lvert b \rvert$ finishes the proof of this lemma.
\end{proof}

In light of Lemma~\ref{lem.measure_parabola} we thus find
\[
 \mathrm{meas} ( \{\bm{x} \in \mathrm{supp}(w) \colon \lvert \gamma \nabla F(\bm{x}) + \bm{w} \rvert \ll  \lvert \bm{w} \rvert^{1/2} \}) \ll \prod_{i=1}^n \min \{ \lvert \bm{w}\rvert^{-1/4} , \lvert w_i \rvert^{-1/2} \}
\]
if $F$ is a diagonal cubic form. Noting that the expression on the right hand side is $\gg_q 1$ if $\lvert \bm{w}\rvert \ll 1$ we infer from Lemma~\ref{lem.J_F_estimate} 
\begin{equation}\label{Eq: J_F.integral.estimate}
J_F(\gamma,\bm{w}) \ll \prod_{i=1}^n \min \{ \lvert \bm{w}\rvert^{-1/4} , \lvert w_i \rvert^{-1/2} \},
\end{equation}
for all $\gamma \in K_\infty$ and all $\bm{w} \in K_\infty^n\setminus\{\bm{0}\}$.

We will also have to deal with averages of $I_r(\theta,\bm{c})$ over $\theta$, which are of the form
\[
I_r(\bm{c})\coloneqq \int_{|\theta|<|r|^{-1}\widehat{Q}^{-1}}I_r(\theta,\bm{c})\dd \theta.
\]
While we do not have a convenient form of partial summation available in the function field setting, the next lemma will be crucial in replacing this tool.
\begin{lemma}\label{Le: I_r(theta,c) deps on |r|}
Let $r_1,r_2\in \O$ be such that $|r_1|=|r_2|$. Then $I_{r_1}(\bm{c})=I_{r_2}(\bm{c})$.
\end{lemma}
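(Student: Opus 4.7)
The plan is to deduce the equality from a pair of coordinated changes of variables. Setting $u := r_2/r_1 \in K^{\times}$, the hypothesis $|r_1|=|r_2|$ gives $|u|=1$, which is the single fact that drives everything: such a $u$ preserves the Haar measure on $K_\infty$ and $K_\infty^n$, the domain of $\theta$, and the support condition $|\bm{x}|=q^{-1}$ defining $w$.

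I would start from
\[
I_{r_2}(\bm{c})=\int_{|\theta|<|r_1|^{-1}\widehat{Q}^{-1}}\int_{K_\infty^n} w(\bm{x})\,\psi\!\left(\theta P^3 F(\bm{x})+\frac{P\bm{c}\cdot\bm{x}}{ur_1}\right)\dd\bm{x}\,\dd\theta,
\]
where I have used $|r_2|=|r_1|$ to rewrite the outer domain and $r_2 = u r_1$ in the phase. In the inner integral I substitute $\bm{x}=u\bm{y}$: the Jacobian equals $|u|^n = 1$, we have $w(u\bm{y})=w(\bm{y})$ since $|u\bm{y}|=|\bm{y}|$, the cubic form transforms as $F(u\bm{y})=u^3 F(\bm{y})$ by degree-three homogeneity, and the factor $u$ in the numerator cancels with the one in $u r_1$. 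In the outer integral I then substitute $\phi=u^3\theta$: the Jacobian and the domain are unchanged because $|u^3|=1$, and the resulting factor $u^3$ inside the character is exactly what is needed to cancel the $u^3$ produced by the previous substitution. The integrand becomes $w(\bm{y})\,\psi\!\left(\phi P^3 F(\bm{y})+P\bm{c}\cdot\bm{y}/r_1\right)$, the outer domain is $|\phi|<|r_1|^{-1}\widehat{Q}^{-1}$, and the resulting double integral is literally $I_{r_1}(\bm{c})$.

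There is no genuine obstacle here: every manipulation is a formal consequence of $|u|=1$, of the homogeneity of $F$ of degree $3$, and of the fact that $w$ depends on $\bm{x}$ only through $|\bm{x}|$. The only point requiring care is coordinating the two substitutions so that the $u^3$ created by $F(u\bm{y})=u^3 F(\bm{y})$ is absorbed into $\theta$ instead of accumulating in the phase; this is the unique reason the argument hinges on the cubic homogeneity of $F$.
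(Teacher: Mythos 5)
Your proof is correct. You compare $I_{r_1}(\bm{c})$ and $I_{r_2}(\bm{c})$ directly by writing $r_2 = u r_1$ with $|u|=1$ and performing the two coordinated substitutions $\bm{x}=u\bm{y}$ and $\phi=u^3\theta$; since $|u|^n=1$ and $|u^3|=1$, these have trivial Jacobians, preserve the $\theta$-domain, and cancel against each other once you use the degree-$3$ homogeneity $F(u\bm{y})=u^3F(\bm{y})$ and the fact that $w$ depends only on $|\bm{x}|$. This is a slightly different route from the paper's: there, one substitutes $\bm{y}=\bm{x}r^{-1}$ in the inner integral and then uses the orthogonality relation~\eqref{Eq: OrthogonalityOfCharacters} to evaluate the $\theta$-integral explicitly as a quantity that visibly depends on $r$ only through $|r|$. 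Your version avoids the appeal to orthogonality by absorbing the resulting $u^3$ into $\theta$ directly, which buys a marginally cleaner argument that makes the ``invariance under units'' mechanism more transparent; the paper's version, by contrast, produces along the way the explicit closed form of the $\theta$-integral as an indicator factor $(|r|\widehat{Q})^{-1}\mathds{1}_{|P^3F(\bm{y})|<|r|^{-2}\widehat{Q}}$, which is a byproduct but not strictly needed for the lemma. One minor remark: the argument does not actually depend on the degree being $3$ --- it uses only that $F$ is homogeneous, since $|u^{\deg F}|=1$ for any degree when $|u|=1$ --- so describing the reliance as being on ``cubic'' homogeneity is accurate but slightly more restrictive than necessary.
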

\begin{proof} Write $r=r_1$ for brevity. We shall show that $I_r(\bm{c})$ only depends on the absolute value of $r$. Indeed, recalling~\eqref{Eq: Definition I_r(theta,c)}, for $\bm{c}$ fixed we have
\begin{align}
    I_r(\bm{c})&=\int_{|\theta|<|r|^{-1}\widehat{Q}^{-1}}\int_{K^n_\infty} w(\bm{x}) \psi\left(\theta P^3 f(\bm{x})+\frac{P\bm{c}\cdot \bm{x}}{r}\right)\dd\bm{x}\dd\theta\nonumber\\
    &= |r|^{n}\int_{K_\infty^n}w(r\bm{y})\psi(P\bm{c}\cdot \bm{y})\int_{|\theta|<|r|^{-1}\widehat{Q}^{-1}}\psi(\theta P^3 r^3 f(\bm{y}))\dd\theta\dd\bm{y}\label{Eq: IndependenceInt},
\end{align}
where we used Fubini's theorem and applied the change of variables $\bm{y}=\bm{x}r^{-1}$. It follows from~\eqref{Eq: OrthogonalityOfCharacters} that 
\[
\int_{|\theta|<|r|^{-1}\widehat{Q}^{-1}}\psi(\theta P^3r^3f(\bm{y}))\dd\theta =\begin{cases} (|r|\widehat{Q})^{-1} &\text{if }|P^3f(\bm{y})|<|r|^{-2}\widehat{Q},\\ 0 &\text{else.}\end{cases}
\]
We conclude that the value of the inner integral in~\eqref{Eq: IndependenceInt} only depends on $|r|$ for $\bm{y}$ and $\bm{c}$ fixed. The claim now follows, since $w$ only depends on the absolute value of its argument. 
\end{proof}
To highlight this dependence, we shall write $I_{\widehat{Y}}(\bm{c})=I_r(\bm{c})$ if $|r|=\widehat{Y}$ from now on. In the notation above, for $r \in \mathcal{O}\setminus\{0\}$, $\bm{c} \in \mathcal{O}^n$, $\theta \in \TT$ and $P \in \O$ we have
\[
I_r(\theta,\bm{c}) = J_F\left(P^3 \theta, \frac{P}{r} \bm{c} \right).
\]
Since $I_r(\theta,\bm{c})$ vanishes unless $\frac{|P||\bm{c}|}{|r|}\ll |\theta||P|^3\ll \frac{|P||\bm{c}|}{|r|}$, we deduce from~\eqref{Eq: J_F.integral.estimate} the following integral estimate.
\begin{lemma} \label{lem.integral_estimate_for_I_r}
Let $Y \geq 0$, $\bm{c} \in \mathcal{O}^n\setminus\{\bm{0}\}$,  and  $P \in \O$. Then 
\begin{equation*}
    I_{\widehat{Y}}(\bm{c})\ll \min\left\{\frac{|\bm{c}|}{\widehat{Y}|P|^2},\widehat{Y}^{-1}\widehat{Q}^{-1}\right\} \prod_{i = 1}^n \min \left\{ \left( \frac{\lvert P \rvert \lvert \bm{c}\rvert}{\widehat{Y}} \right)^{-1/4}, \left( \frac{\lvert P \rvert \lvert c_i \rvert}{\widehat{Y}} \right)^{-1/2} \right\}.
\end{equation*}
\end{lemma}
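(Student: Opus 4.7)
The plan is to pass through the pointwise estimate for $J_F$ already proved, track what happens under integration in $\theta$, and combine the support constraint with the trivial length bound on the $\theta$-domain.

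First, using Lemma~\ref{Le: I_r(theta,c) deps on |r|}, I may fix any polynomial $r \in \O$ with $|r| = \widehat{Y}$ and work with $I_r(\bm{c})$. Unwinding \eqref{Eq: Definition I_r(theta,c)}, for every $\theta$ we have the identity
\[
I_r(\theta,\bm{c}) = J_F\!\left(P^3\theta,\, \tfrac{P}{r}\bm{c}\right),
\]
so I can apply the preceding analysis of $J_F$ with $\gamma = P^3 \theta$ and $\bm{w} = P\bm{c}/r$. By Lemma~\ref{lem.J_F_estimate}, this integral vanishes outside the range $|\bm{w}| \ll |\gamma| \ll |\bm{w}|$, which in our variables reads
\[
\frac{|P|\,|\bm{c}|}{\widehat{Y}} \ll |P|^3|\theta| \ll \frac{|P|\,|\bm{c}|}{\widehat{Y}},
\]
i.e.\ $|\theta| \asymp |\bm{c}|/(\widehat{Y}|P|^2)$. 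On that range, the pointwise estimate \eqref{Eq: J_F.integral.estimate} (valid for all $\bm{w}\neq\bm{0}$ since the right-hand side is $\gg_q 1$ when $|\bm{w}|\ll 1$) yields
\[
I_r(\theta,\bm{c}) \ll \prod_{i=1}^n \min\!\left\{ \left(\tfrac{|P|\,|\bm{c}|}{\widehat{Y}}\right)^{-1/4}, \left(\tfrac{|P|\,|c_i|}{\widehat{Y}}\right)^{-1/2}\right\},
\]
uniformly in $\theta$ inside the support.

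Next I estimate the $\theta$-measure. The integration is taken over the ball $\{|\theta|<\widehat{Y}^{-1}\widehat{Q}^{-1}\}$, and within this ball the integrand is supported where $|\theta|\ll |\bm{c}|/(\widehat{Y}|P|^2)$. By the ultrametric property, the measure of the intersection of these two balls is bounded by
\[
\min\!\left\{\frac{|\bm{c}|}{\widehat{Y}|P|^2},\, \widehat{Y}^{-1}\widehat{Q}^{-1}\right\}.
\]
Multiplying this measure by the uniform pointwise bound above gives exactly the stated estimate for $I_{\widehat{Y}}(\bm{c})$. No step is particularly delicate; the only thing to be careful about is that \eqref{Eq: J_F.integral.estimate} is used as a pointwise bound (so it can be pulled outside the $\theta$-integral), and that the $\theta$-length is cut off both by the original Farey dissection radius and by the support constraint coming from Lemma~\ref{lem.J_F_estimate}.
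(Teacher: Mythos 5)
Your argument mirrors the paper's one-line derivation between~\eqref{Eq: J_F.integral.estimate} and the statement of the lemma, but there is a gap that both you and that terse sentence gloss over. Lemma~\ref{lem.J_F_estimate} carries the standing hypothesis $|\bm{w}|\gg 1$, i.e.\ here $|P||\bm{c}|/\widehat{Y}\gg 1$, and you invoke its vanishing dichotomy unconditionally. That hypothesis can fail: $\bm{c}$ is only a nonzero integer vector while $\widehat{Y}$ may be as large as $\widehat{Q}\asymp|P|^{3/2}$, so $|P||\bm{c}|/\widehat{Y}$ can be as small as $\asymp|P|^{-1/2}$.

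When $|P||\bm{c}|/\widehat{Y}<1$ the integrand is nonzero on the strictly larger range $|\theta|\ll|P|^{-3}$ (for instance $I_r(0,\bm{c})=J_F(0,P\bm{c}/r)\asymp 1$, the measure of $\mathrm{supp}(w)$), so your claimed measure factor $\min\{|\bm{c}|/(\widehat{Y}|P|^2),\,\widehat{Y}^{-1}\widehat{Q}^{-1}\}$ undershoots the true measure by the factor $\widehat{Y}/(|P||\bm{c}|)\gg 1$. The lemma survives anyway, but only because in this same regime the pointwise factor equals $(|P||\bm{c}|/\widehat{Y})^{-n/4}$, which exceeds the trivial bound $|J_F|\ll 1$ by at least $(\widehat{Y}/(|P||\bm{c}|))^{n/4}\geq\widehat{Y}/(|P||\bm{c}|)$ once $n\geq 4$; this slack exactly absorbs what your measure estimate is missing. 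You noticed that~\eqref{Eq: J_F.integral.estimate} needs a caveat when $|\bm{w}|\ll 1$, but missed that the vanishing claim needs precisely the same caveat, and it is this compensation that legitimises the first term of the min. A clean fix is to split into $|P||\bm{c}|/\widehat{Y}>q$ (where Lemma~\ref{lem.J_F_estimate} applies as you use it) and $\leq q$ (where one pairs the true support $|\theta|\ll|P|^{-3}$ with the trivial $|J_F|\ll 1$ and checks the result is dominated by the stated right-hand side because $n\geq 4$).
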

So far we have not yet achieved any non-trivial estimates for $I_{\widehat{Y}}(\bm{0})$ and in fact we will have to do slightly better than the trivial bound for our treatment. 
\begin{lemma} \label{lem.I(0) estimate}
Assume $n \geq 4$. Let $P \in \mathcal{O} \setminus \{ \bm{0} \}$. Then for any  $Y \geq 1$ we have
\begin{equation*}
    I_{\widehat{Y}}(\bm{0}) \ll \lvert P \rvert^{-3 + \varepsilon}.
\end{equation*}
\end{lemma}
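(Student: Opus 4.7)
The plan is as follows. By Fubini and the definition of $I_{\widehat{Y}}(\bm{0})$ we have
\[
I_{\widehat{Y}}(\bm{0}) = \int_{|\theta| < \widehat{Y}^{-1}\widehat{Q}^{-1}} J_F(P^3 \theta,\bm{0})\,\mathrm{d}\theta,
\]
so after the substitution $\eta = P^3 \theta$ and setting $U := |P|^3 \widehat{Y}^{-1}\widehat{Q}^{-1}$ it suffices to prove the pointwise bound
\[
|J_F(\eta,\bm{0})| \ll \min\{1,|\eta|^{-n/4}\}
\]
and then integrate over $|\eta| < U$. Indeed, if $U \leq 1$ the trivial bound $J_F \ll 1$ gives $I_{\widehat{Y}}(\bm{0}) \ll \widehat{Y}^{-1}\widehat{Q}^{-1} \leq |P|^{-3}$, while for $U > 1$ and $n \geq 4$, the contribution from $1 \leq |\eta| < U$ is bounded by
\[
|P|^{-3} \int_{1 \leq |\eta| < U} |\eta|^{-1}\,\mathrm{d}\eta \ll |P|^{-3}\log U \ll |P|^{-3+\varepsilon},
\]
matching the target.

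To establish the pointwise bound on $J_F(\eta,\bm{0})$, I would decompose $w = \chi_{\TT^n} - \chi_{(t^{-1}\TT)^n}$ so that $J_F(\eta,\bm{0})$ becomes a difference of two integrals, one over $\TT^n$ and one over $(t^{-1}\TT)^n$. The second integral is transformed into an integral over $\TT^n$ via the rescaling $\bm{x} \mapsto t^{-1}\bm{y}$, which simply rescales the frequency from $\eta$ to $t^{-3}\eta$ up to a factor $q^{-n}$. To each of the two resulting integrals I would apply Lemma~\ref{lem.integral_vanishes_BV} with $\bm{w} = \bm{0}$, bounding them in absolute value by the measure of
\[
\Omega_\gamma = \{\bm{x} \in \TT^n : |\gamma \nabla F(\bm{x})| \leq H_F \max(1,|\gamma|^{1/2})\},
\]
for $\gamma = \eta$ and $\gamma = t^{-3}\eta$ respectively. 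Since $F$ is diagonal we have $\nabla F(\bm{x}) = (3F_ix_i^2)_i$, so the defining condition of $\Omega_\gamma$ forces $|x_i|^2 \ll |\gamma|^{-1/2}$ for every $i$ when $|\gamma| \geq 1$, which combined with the elementary measure estimate $\mathrm{meas}\{x \in \TT : |x|^2 \leq R\} \ll R^{1/2}$ yields $\mathrm{meas}(\Omega_\gamma) \ll |\gamma|^{-n/4}$. Unwinding produces $|J_F(\eta,\bm{0})| \ll |\eta|^{-n/4}$ for $|\eta| \gg 1$ and the trivial bound otherwise.

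The main subtlety lies in the application of Lemma~\ref{lem.integral_vanishes_BV} in the degenerate situation $\bm{w} = \bm{0}$, where all the savings come from the shrinking of the support region $\Omega_\gamma$ rather than from oscillatory cancellation. The assumption $n \geq 4$ is what converts the resulting per-point decay $|\eta|^{-n/4}$ into a function whose integral over $|\eta| < U$ is bounded by $\log U \ll |P|^\varepsilon$, and hence into the claimed saving.
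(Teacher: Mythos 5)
Your proof is correct and takes essentially the same route as the paper's: write $w=\chi_{\TT^n}-\chi_{(t^{-1}\TT)^n}$, apply Lemma~\ref{lem.integral_vanishes_BV} (with $\bm{w}=\bm{0}$) to each piece to bound the oscillatory integral by the measure of the region where $|\gamma\nabla F(\bm{x})|$ is small, exploit diagonality to reduce this to a product of one–dimensional parabolic measure estimates giving $\max\{1,|\gamma|\}^{-n/4}$, and then integrate in $\theta$, splitting the range at the point where the decay kicks in. The only difference is cosmetic — you rescale to $\eta=P^3\theta$ before integrating, whereas the paper keeps $\theta$ and splits the range at $|\theta|\asymp|P|^{-3}$ — and the invocation of ``Fubini'' in your first display is unnecessary (that identity is just the definition of $I_{\widehat{Y}}(\bm{0})$), but the underlying computation and the use of $n\ge4$ to get the $\log$-type contribution are identical to the paper's.
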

\begin{proof}
For $r \in \mathcal{O} \setminus \{ 0 \}$ such that $\lvert r \rvert = {\widehat{Y}}$, Lemma~\ref{lem.integral_vanishes_BV} gives
\[
\tilde{I}_r(\theta,\bm{0}) \coloneqq \int_{\TT^n} \psi \left( \theta P^3 F(\bm{x})\right) \dd \bm{x}  \ll \mathrm{meas} (\{ \bm{x} \in \TT^n \colon \lvert \nabla F(\bm{x}) \rvert \leq \max\{1, \lvert \theta \rvert \lvert P \rvert^{3}\}^{-1/2}\}).
\]
Now it is not hard to see that $I_r(\theta,\bm{0}) = \tilde{I}_r(\theta,\bm{0}) - q^{-n} \tilde{I}_r(q^{-3}\theta,\bm{0})$. From Lemma~\ref{lem.J_F_estimate} we deduce
\[
I_r(\theta,\bm{0}) \ll \mathrm{meas}( \{ \bm{x} \in \TT^n \colon \lvert \nabla F(\bm{x}) \rvert \ll  \max\{1, \lvert \theta \rvert \lvert P \rvert^{3}\}^{-1/2}\}).
\]
Since $F$ is diagonal we have $\lvert \nabla F(\bm{x}) \rvert \geq \lvert \bm{x} \rvert^2$ whence 
\[
I_r(\theta,\bm{0}) \ll \max\{1, \lvert \theta \rvert \lvert P \rvert^{3}\}^{-n/4}.
\]
By definition of $I_{\widehat{Y}}(\bm{0})$ we may divide the area of integration up as follows
\[
I_{\widehat{Y}}(\bm{0}) = \int_{\lvert \theta \rvert \ll \lvert P \rvert^{-3}} I_r(\theta,\bm{0}) \dd \theta + \int_{ \lvert P \rvert^{-3} \ll \lvert \theta \rvert < \widehat{Q}^{-1} {\widehat{Y}}^{-1}} I_r(\theta,\bm{0}) \dd \theta.
\]
The first term is trivially $O(\lvert P \rvert^{-3})$. For the second term note
\[
\int_{ \lvert P \rvert^{-3} \ll \lvert \theta \rvert < \widehat{Q}^{-1} {\widehat{Y}}^{-1}} I_r(\theta,\bm{0}) \dd \theta \ll \int_{\lvert P \rvert^{-3} \ll \lvert \theta \rvert < \widehat{Q}^{-1} {\widehat{Y}}^{-1}} \lvert P \rvert^{-3n/4} \lvert \theta \rvert^{-n/4} \dd \theta \ll \lvert P \rvert^{-3+\varepsilon}.
\]

The result now follows.
\end{proof}

\section{Exponential sum estimates}\label{Se:ExpSums}
We want to estimate the sum
\begin{align}\label{Eq: Expsum_as_product}
\begin{split}
    S_r(\bm{c})&= \sideset{}{'}\sum_{|a|<|r|}\sum_{|\bm{x}|<|r|}\psi\left(\frac{aF(\bm{x})+\bm{c}\cdot\bm{x}}{r}\right)\\
    &=\sideset{}{'}\sum_{|a|<|r|}\prod_{i=1}^n\sum_{|x|<|r|}\psi\left(\frac{aF_ix^3+c_ix}{r}\right),
    \end{split}
\end{align}
where $F(\bm{x})=\sum_{i=1}^nF_ix_i^3$. The corresponding sum over the integers has already been subject to thorough investigation by Heath-Brown~\cite{heath1983cubic} and Hooley~\cite{hooley1986waring}. Browning--Vishe~\cite{CubicHypersurfacesBV} have translated many of the properties to the function field setting, some of which we shall record here. 

The quality of our estimates is intimately connected to the dual form $F^*$ of $F$, which is an absolutely irreducible polynomial of degree $2^{n-2}\times 3$ whose zero locus parameterises hyperplanes that have a singular intersection with the projective hypersurface cut out by $F$. As explained by Wang~\cite[Appendix D]{wang2021diagonal}, if $F$ is diagonal and $\cha(K)>3$, we can take 
\begin{equation}\label{Def: DualForm}
    F^*(\bm{c})=\left(\prod_{i=1}^nF_i\right)^{2^{n-2}}\prod\left((F_1^{-1}c_1^3)^{1/2}\pm \cdots \pm (F_n^{-1}c_n^3)^{1/2}\right),
\end{equation}
where the inner product runs through all possible combinations of $\pm$. In fact, in~\cite{wang2021diagonal} this is only shown for $K=\q$, but one can check that the requirement $\cha(K)>3$ is sufficient for~\eqref{Def: DualForm} to hold. In characteristic 2, we have the following result.
\begin{lemma}\label{Le: DualChar2}
    Let $K$ be a field of characteristic 2 and $F(\bm{x})=\sum_{i=1}^n F_ix_i^3 \in K[x_1,\dots,x_n]$ be a non-singular cubic form. Then the dual form of $F$ is given by
    \[
    F^*(\bm{c})=\left(\prod_{i=1}^n F_i\right)\sum_{i=1}^n F_i^{-1}c_i^3.
    \]
\end{lemma}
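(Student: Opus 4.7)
The plan is to compute the dual variety directly from its definition and match the resulting equation with the stated formula. Recall that the dual variety $X^* \subset (\PP^{n-1})^\vee$ of the smooth hypersurface $X = V(F) \subset \PP^{n-1}$ parametrises those hyperplanes $H_{\bm{c}} = \{\bm{x} \colon \bm{c}\cdot\bm{x}=0\}$ which are tangent to $X$. Concretely, $\bm{c}$ lies in $X^*$ if and only if there exist $\bm{x} \neq \bm{0}$ in $\overline{K}^n$ and $\lambda \in \overline{K}$ with $\nabla F(\bm{x}) = \lambda \bm{c}$, $\bm{c}\cdot\bm{x}=0$, and $F(\bm{x})=0$. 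In characteristic $2$ the Euler relation reads $F(\bm{x}) = \sum_i x_i \partial_i F(\bm{x})$ (since $3 = 1$), and combining this with the tangent condition yields $F(\bm{x}) = \lambda (\bm{c}\cdot\bm{x})$. Hence the equation $F(\bm{x})=0$ is automatic given the other two and may be discarded.

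Next, the tangent condition becomes $F_i x_i^2 = \lambda c_i$ for every $i$, and because the Frobenius is bijective on $\overline{K}$ this solves uniquely as $x_i = \lambda^{1/2} F_i^{-1/2} c_i^{1/2}$. Substituting into $\bm{c}\cdot\bm{x}=0$ gives
\[
\lambda^{1/2} \sum_{i=1}^n F_i^{-1/2} c_i^{3/2} = 0.
\]
If $\lambda = 0$ then $\bm{x}=\bm{0}$, which is excluded, so we may assume $\lambda \neq 0$. Squaring and using the additivity of the Frobenius in characteristic $2$, this is equivalent to $\sum_i F_i^{-1} c_i^3 = 0$, and after multiplying through by $\prod_i F_i$ we recover exactly $F^*(\bm{c})=0$. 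The converse follows by reversing the argument: given $\bm{c}\neq \bm{0}$ with $F^*(\bm{c})=0$, the choices $\lambda = 1$ and $x_i = F_i^{-1/2} c_i^{1/2}$ produce a tangent hyperplane, so set-theoretically $X^* = V(F^*)$.

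It remains to verify that $F^*$ really is the defining polynomial of $X^*$ rather than a non-trivial power of it. A direct computation gives $\partial_{c_j} F^* = \left(\prod_i F_i\right) F_j^{-1} c_j^2$ in characteristic $2$, so the singular locus of $V(F^*) \subset \A^n$ is just the origin. Hence $V(F^*) \subset \PP^{n-1}$ is smooth and in particular geometrically irreducible, which forces $F^*$ to be irreducible as a cubic form. Combined with the set-theoretic equality $V(F^*)=X^*$, this pins $F^*$ down as a scalar multiple of the defining polynomial of $X^*$, and the formula in the statement fixes this scalar. I do not anticipate any serious obstacle: the entire calculation is driven by the vanishing of $3$ in characteristic $2$, which both trivialises the Euler term and reduces the Gauss map to a Frobenius twist, thereby collapsing the generic dual degree $3 \cdot 2^{n-2}$ down to $3$ and explaining the remarkably simple shape of $F^*$.
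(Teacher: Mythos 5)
Your proof is correct and follows essentially the same route as the paper's: parametrise the tangent hyperplanes via $\nabla F(\bm x)=\lambda\bm c$, use non-singularity and the bijectivity of squaring in characteristic $2$ to solve $x_i=\lambda^{1/2}F_i^{-1/2}c_i^{1/2}$, observe that $F(\bm x)=0$ is then redundant, and square the resulting linear relation. The closing irreducibility check (which the paper leaves implicit) is a sensible extra step, though it is not needed for the later applications, which depend only on the support of $F^*$.
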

\begin{proof}
By definition the zero locus $V(F^*)\subset \PP^{n-1}$ parameterises points $\bm{c}\in \PP^{n-1}$ such that the hyperplane $\bm{c}\cdot \bm{x}=0$ has a singular intersection with $V(F^*)$. This means, that there exists $\bm{x}\in \PP^{n-1}(\overline{K})$ such that 
\begin{equation}\label{Eq: DualFormChar2}
\rk \begin{pmatrix}
    \nabla F (\bm{x}) \\ \bm{c}\end{pmatrix} = 1, \quad \bm{c}\cdot \bm{x}=0 \quad\text{and}\quad F(\bm{x})=0.
\end{equation}
Since we assume $F$ to be non-singular, the rank condition implies that $\bm{c}$ is proportional to $\nabla F(\bm{x})$, that is, $x_i^2 = \lambda F_i^{-1}c_i$ for some $\lambda \in \overline{K}^\times$ and $i=1,\dots, n$. Any pair $(\bm{x},\bm{c})$ having this property then satisfies $F(\bm{x})=0$ if and only if $\bm{c}\cdot \bm{x}=0$. Moreover, the third condition in~\eqref{Eq: DualFormChar2} is equivalent to 
\[
\sum_{i=1}^n F_i^{-1/2}c_i^{3/2}=0,
\]
where we used that every element of $\overline{K}$ has a unique square-root as $\cha(K)=2$. However, again since we are in characteristic 2, this is is equivalent to 
\[
\sum_{i=1}^nF_i^{-1}c_i^3=0.
\]
The result now follows after clearing denominators.
\end{proof}
Note that if $r_1, r_2\in \O$ are coprime, then 
\begin{equation}\label{Eq: S_r(c) multiplicative}
    S_{r_1r_2}(\bm{c})=S_{r_1}(\bm{c})S_{r_2}(\bm{c}),
\end{equation}
which follows readily from the Chinese remainder theorem.
This essentially reduces the task of estimating $S_r(\bm{c})$ to prime power moduli. Indeed, suppose $S_{\varpi^k}(\bm{c})\leq C |\varpi|^{k\alpha}$ for some $\alpha>0$ and some absolute constant $C$. Let $\Omega(r)$ be the number of prime divisors of $r$. Then by multiplicativity of $S_r(\bm{c})$ we have 
\begin{align*}
    S_r(\bm{c})&=\prod_{\varpi^k\parallel r}S_{\varpi^k}(\bm{c})
    \leq \prod_{\varpi^k\parallel r}C|\varpi|^{k\alpha}
    = C^{\Omega(r)}|r|^{\alpha}
    \ll \tau(r) |r|^\alpha 
    \ll |r|^{\alpha+\varepsilon}
\end{align*}
by the usual estimate for the divisor function $\tau(r)$, see~\cite[Lemma 5.9]{Browning2021}.

Further, if $\varpi$ is irreducible such that $\varpi\nmid F^*(\bm{c})$, then Browning--Vishe~\cite[Section 5]{CubicHypersurfacesBV} show
\begin{equation}\label{Eq: S_w^k(c) vanishes}
    S_{\varpi^k}(\bm{c})=0\quad \text{for }k\geq 2.
\end{equation}

\subsection{Square-free moduli contribution}
Deligne's resolution of the Weil  conjectures~\cite{deligne1974conjecture} shows that we get square-root cancellation for the sums $S_{\varpi}(\bm{c})$ whenever $\varpi$ is suitably generic:
\begin{equation}\label{Eq: DeligneS_r(c)}
    S_\varpi(\bm{c})\ll |\varpi|^{(n+1)/2}|(\varpi, \nabla F^*(\bm{c}))|^{1/2}.
\end{equation}
However, this is not sufficient for our purposes. In the integer setting Hooley~\cite{hooley1986waring} was the first to achieve an extra saving when averaging the sums $S_r(\bm{c})$ over $r$ by appealing to certain hypotheses about Hasse--Weil $L$-functions associated to cubic threefolds. By virtue of Deligne's proof of the Weil conjectures~\cite{deligne1980conjecture}  these hypotheses are in fact theorems in the function field setting. This enabled Browning--Vishe~\cite[Lemma 8.5]{CubicHypersurfacesBV} to establish the following result unconditionally.
\begin{lemma}\label{Le: Hasse-WeilExpBound}
Suppose $n$ is even and $F^*(\bm{c})\neq 0$. Then for any $Z\geq 0$ and $\varepsilon>0$, we have 
\begin{equation*}
    \sum_{\substack{|r|\leq \widehat{Z}\\ (r, \Delta_F F^*(\bm{c}))=1}}\frac{S_r(\bm{c})}{|r|^{(n+1)/2}}\ll |\bm{c}|^{\varepsilon}\widehat{Z}^{1/2+\varepsilon},
\end{equation*}
where $\Delta_F$ is the discriminant of $F$ and by virtue of~\eqref{Eq: S_w^k(c) vanishes} $r$ ranges over square-free values only.
\end{lemma}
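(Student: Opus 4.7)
The plan is to identify the normalised sums $\lambda_r := S_r(\bm{c})/|r|^{(n+1)/2}$ with coefficients of a Hasse--Weil $L$-function to which Deligne's resolution of the Weil conjectures~\cite{deligne1980conjecture} supplies the Riemann hypothesis, and then to extract the truncated sum via a function field analogue of Perron's formula.

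First, for an irreducible $\varpi$ coprime to $\Delta_F F^*(\bm{c})$, a standard character manipulation (already used to obtain~\eqref{Eq: DeligneS_r(c)}) rewrites $S_\varpi(\bm{c})$ in terms of the number of $\F_\varpi$-points of the smooth hyperplane section
\[
V_{\bm{c}} := \{F = 0\} \cap \{\bm{c}\cdot\bm{x} = 0\} \subset \PP^{n-2}.
\]
Subtracting the projective space contribution leaves a trace of $\Frob_\varpi$ on the primitive middle cohomology $H^{n-3}_{\mathrm{prim}}(V_{\bm{c}}, \mathbb{Q}_\ell)$. Since $n$ is even, $n-3$ is odd, so no Frobenius eigenvalue is purely rational and Deligne's theorem forces each to have absolute value exactly $|\varpi|^{(n-3)/2}$, which already recovers~\eqref{Eq: DeligneS_r(c)}. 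One then packages these traces into the Euler product
\[
L(s,\bm{c}) = \prod_\varpi \det\!\Bigl(1 - |\varpi|^{-s}\Frob_\varpi \,\Big|\, H^{n-3}_{\mathrm{prim}}(V_{\bm{c}}, \mathbb{Q}_\ell)^{I_\varpi}\Bigr)^{-1},
\]
so that, in view of~\eqref{Eq: S_r(c) multiplicative} and the vanishing~\eqref{Eq: S_w^k(c) vanishes} at higher prime powers, the generating series $\sum_r \lambda_r |r|^{-s}$ matches $L(s,\bm{c})$ up to boundary factors supported at bad primes. By the Grothendieck--Lefschetz trace formula $L(s,\bm{c})$ is a rational function in $T = q^{-s}$, and Weil II places all of its zeros and poles on the circle $|T| = q^{-1/2}$.

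The truncated sum is then recovered by contour integration. Applying the function field Perron formula and pushing the integration contour outward to $|T| = q^{-1/2-\varepsilon}$ collects $O(\deg L(\cdot,\bm{c}))$ residues, each dominated by $\widehat{Z}^{1/2+\varepsilon}$. The principal obstacle, and the only source of the factor $|\bm{c}|^\varepsilon$, lies in bounding $\deg L(\cdot,\bm{c})$: the Euler--Poincar\'e formula for $\ell$-adic cohomology ties this degree to the number of bad primes dividing $\Delta_F F^*(\bm{c})$, which is $O(|\bm{c}|^\varepsilon)$ by the standard divisor bound on $F^*(\bm{c})$. The whole scheme is the unconditional function field counterpart of Hooley's conditional argument~\cite{hooley1986waring} and is executed in detail in~\cite[Lemma 8.5]{CubicHypersurfacesBV}, from which we quote it.
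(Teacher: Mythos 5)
Your sketch is essentially a faithful abstract of the argument in~\cite[Lemma~8.5]{CubicHypersurfacesBV}, which is exactly what the paper invokes here: the paper offers no independent proof of this lemma and simply cites Browning--Vishe (it only adds a remark that their Lemma~8.5 is actually a character-twisted generalisation, needed there because they could not decouple $I_r(\theta,\bm{c})$ from the $r$-sum, a difficulty the present paper sidesteps via Lemma~\ref{Le: I_r(theta,c) deps on |r|}). One small inaccuracy in your phrasing: Deligne forces $|\alpha|=|\varpi|^{(n-3)/2}$ regardless of the parity of $n-3$; what the evenness of $n$ actually buys is that the middle-cohomology factor sits in the \emph{numerator} of the zeta function of $V_{\bm{c}}$ (so the relevant $L$-factor is a polynomial of controlled degree) and that the odd-dimensional primitive cohomology carries no Tate twist, so no stray pole spoils the contour shift.
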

\begin{remark}
In fact Browning--Vishe have to consider averages of $S_r(\bm{c})$ twisted by a Dirichlet character of $K_\infty$ since they were unable to separate the integral $I_r(\theta, \bm{c})$ from summation. However, we can resolve this issue with Lemma~\ref{Le: I_r(theta,c) deps on |r|} allowing us to combine Lemma~\ref{Le: Hasse-WeilExpBound} with the integral bounds from Lemma~\ref{lem.integral_estimate_for_I_r} more efficiently.
\end{remark}
\subsection{Pointwise estimates}
For $B\in\O$ fixed and $a,r\in\O\setminus\{0\}$ with $(a,r)=1$, let
\[
S_r(a,c)=\sum_{|x|<|r|}\psi\left(\frac{aBx^3+cx}{r}\right).
\]
In view of~\eqref{Eq: Expsum_as_product} upper bounds for $S_r(a,c)$ directly transform into estimates for $S_r(\bm{c})$. Moreover, by~\eqref{Eq: S_r(c) multiplicative} it suffices to consider the case $r=\varpi^k$, where $\varpi$ is irreducible. Hooley~\cite{hooley1986waring} has provided upper bounds for the integer-analogue of the sum $S_{\varpi^k}(a,c)$ whenever $\varpi\nmid B$. As explained by Heath-Brown~\cite{DiagonalCubicHB}, these estimates also hold if $\varpi \mid B$ when we allow the implied constant to depend on $B$. Hooley's and Heath-Brown's proofs of these results go through almost verbatim in the function field setting and so we spare the reader from the tedious exercise of reproducing them here. To state the final outcome, we need some notation. First, we set $\{\varpi^k,c\}=(\varpi^k,c)$ for $k=2$ and for $k\geq 3$, we define $\{\varpi^k,c\}=|\varpi|^{-1}$ if $\varpi \parallel c$ and $\{\varpi^k,c\}=(\varpi^k,c)$ else. For later use, we generalise this to square-full $r$ by setting
\[
\{r,c\}\coloneqq \prod_{\varpi^k\parallel r}\{\varpi^k,c\}.
\]
We then have 
\begin{align}
    S_{\varpi^k}(a,c)&\ll |\varpi|^{k/2}\lvert\{\varpi^k,c\}\rvert^{1/4} \quad\text{for }k\geq 2.\label{Eq: primePowerEstimates}
\end{align}
We shall also use an estimate of Hua~\cite[Lemma 1.1]{hua_1965}, whose proof, again, readily translates to the function field setting. If $g( {x}) = \sum_{i=0}^d g_ix^i \in \mathcal{O}[x]$, then for any $\varpi \in \mathcal{O}$ irreducible we have
\begin{equation}\label{Eq: HuaEstimate}
    \sum_{\lvert x \rvert < \lvert \varpi \rvert^k} \psi\left( \frac{g(x)}{\varpi^k} \right) \ll |\varpi|^{k(1-1/d)} \lvert ( \varpi^k, g_0, \hdots, g_d) \rvert^{1/d},
\end{equation}
where the constant depends only on $\varepsilon$ and $d$. Originally this was stated in the case when $\varpi \nmid (g_0, \hdots, g_d)$, but the factor $\lvert (\varpi^k, g_0, \hdots, g_d) \rvert^{1/d}$ in the estimate accounts for the possibility of $\varpi \mid (g_0, \hdots, g_d)$. Therefore we obtain
\begin{equation*}
    S_{\varpi^k}(a,c) \ll \lvert  \varpi \rvert^{2k/3},
\end{equation*}
where the implied constant depends on $\varepsilon$ but crucially not on $a$ since we assumed $\varpi \nmid a$. Using~\eqref{Eq: Expsum_as_product}, we can immediately deduce the following lemma from~\eqref{Eq: primePowerEstimates} and~\eqref{Eq: HuaEstimate}, which is the analogue of~\cite[Lemma 5.1.]{DiagonalCubicHB}.
\begin{lemma}\label{Le: ExpSumEstimate}
It holds that 
\[
S_{\varpi^2}(\bm{c})\ll |\varpi|^{2+n}.
\]
In addition, if $(\varpi^k,\bm{c})=H_\varpi$ and there at least $m$ indices $i$ such that $(\varpi^k,c_i)=H_\varpi$, then 
\[
S_{\varpi^k}(\bm{c})\ll|\varpi|^{k+2(n-m)k/3+mk/2}|H_\varpi|^{m/4}.
\]
\end{lemma}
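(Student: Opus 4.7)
The plan is to exploit the factorisation~\eqref{Eq: Expsum_as_product}, which writes $S_{\varpi^k}(\bm{c})$ as a sum over $a$ coprime to $\varpi$ of $\prod_{i=1}^n S_{\varpi^k}(a,c_i)$. Since the outer $a$-sum has fewer than $|\varpi|^k$ terms, both parts of the lemma reduce to bounding the product uniformly in $a$.

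For the second statement, I would distribute the two available pointwise estimates across the $n$ one-variable sums. For each of the $m$ indices with $(\varpi^k,c_i)=H_\varpi$, apply~\eqref{Eq: primePowerEstimates} and use $|\{\varpi^k,c_i\}|^{1/4}\leq |H_\varpi|^{1/4}$ (which holds in both cases of the definition of $\{\cdot,\cdot\}$), yielding a factor $|\varpi|^{k/2}|H_\varpi|^{1/4}$. For each of the remaining $n-m$ indices, apply Hua's bound~\eqref{Eq: HuaEstimate}, contributing a factor $|\varpi|^{2k/3}$. Multiplying these contributions and multiplying by the $<|\varpi|^k$ outer sum over $a$ delivers the claimed exponent; this part is exponent bookkeeping.

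For the first statement, the quoted estimates are not individually sharp enough at $k=2$: in the worst case they give $|\varpi|^{3/2}$ or $|\varpi|^{4/3}$ per factor, which after the outer sum exceeds $|\varpi|^{n+2}$. The plan is therefore to sharpen the per-factor bound to $|S_{\varpi^2}(a,c)|\ll |\varpi|$ by a direct Hensel-type computation. Writing $x=y+z\varpi$ with $|y|,|z|<|\varpi|$ and using the identity $x^3\equiv y^3+3y^2z\varpi\pmod{\varpi^2}$, the sum factorises as
\[
S_{\varpi^2}(a,c) = \sum_{|y|<|\varpi|}\psi\!\left(\frac{aF_iy^3+cy}{\varpi^2}\right)\sum_{|z|<|\varpi|}\psi\!\left(\frac{z(3aF_iy^2+c)}{\varpi}\right).
\]
The inner $z$-sum vanishes unless $3aF_iy^2+c\equiv 0\pmod{\varpi}$, in which case it equals $|\varpi|$. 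Provided $\varpi\nmid 3F_i$, this quadratic congruence in $y$ has at most two solutions modulo $\varpi$ (precisely one when $\cha(K)=2$, since $y\mapsto y^2$ is Frobenius there), so $|S_{\varpi^2}(a,c)|\ll |\varpi|$. The finitely many primes dividing $3F_1\cdots F_n$ are absorbed into the implied constant, which is allowed to depend on $F$. Taking the product over $i$ and summing over the fewer than $|\varpi|^2$ admissible values of $a$ gives $|S_{\varpi^2}(\bm{c})|\ll |\varpi|^{n+2}$.

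The main obstacle is establishing this sharper per-factor bound underlying the first statement: the pointwise estimates recorded in this section are slightly lossy at $k=2$, so a direct critical-point analysis via Hensel lifting is needed to beat the product of naive per-factor bounds. Once this is in hand, both parts of the lemma fit into the same factor-and-sum template, and the remaining exponent arithmetic is routine.
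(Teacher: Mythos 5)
Your proof is correct, and for the second part it takes exactly the paper's route: distribute the bound~\eqref{Eq: primePowerEstimates} across the $m$ \emph{aligned} indices (using $\lvert\{\varpi^k,c_i\}\rvert\leq\lvert H_\varpi\rvert$, which you correctly verify in both cases of the definition) and Hua's bound~\eqref{Eq: HuaEstimate} across the remaining $n-m$ indices, then multiply by the $<\lvert\varpi\rvert^k$ values of $a$. The exponent arithmetic
\[
k + m\cdot\tfrac{k}{2} + (n-m)\cdot\tfrac{2k}{3} = k + \tfrac{2(n-m)k}{3} + \tfrac{mk}{2}
\]
is what the lemma asserts.

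For the first part, you have spotted something real: the paper's phrase ``immediately deduce'' glosses over the fact that~\eqref{Eq: primePowerEstimates} as literally stated is not sharp enough at $k=2$. If $\varpi\mid c_i$ then $\lvert\{\varpi^2,c_i\}\rvert\geq\lvert\varpi\rvert$, and feeding $S_{\varpi^2}(a,c_i)\ll\lvert\varpi\rvert\lvert\{\varpi^2,c_i\}\rvert^{1/4}$ into the product gives no better than $\lvert\varpi\rvert^{2+n}\prod_i\lvert(\varpi^2,c_i)\rvert^{1/4}$, which overshoots. Your Hensel-type computation, writing $x=y+z\varpi$ so that the $z$-sum isolates the congruence $3aF_iy^2+c\equiv 0\pmod\varpi$ and yields $S_{\varpi^2}(a,c)\ll\lvert\varpi\rvert$ uniformly in $c$ (with at most two solutions for $y$ when $\cha(K)>3$ and exactly one when $\cha(K)=2$, by Frobenius), is precisely the sharpening that is needed and is the standard argument underlying the analogous estimate of Hooley over $\mathbb{Z}$; the finitely many $\varpi\mid 3F_1\cdots F_n$ are absorbed into the implied constant as you say. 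So your proof is not just correct but makes explicit a step the paper leaves to the reader under the guise of an ``immediate'' deduction.
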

\subsection{Averages over square-full moduli}\label{Se: SectionSquarefullModuli}
Suppose we are given a set of $t$ indices $\mathcal{T}\subset\{1,\dots, n\}$ and positive integers $C_i$ for $i\in\mathcal{T}$. For $\bm{C}\coloneqq (C_i)_{i\in\mathcal{T}}$ we define 
 $\mathcal{R}(\bm{C})\subset\O^n$ to be the set of tuples $\bm{c}=(c_1,\dots, c_n)$ such that $|c_i|=\widehat{C}_i$ if $i\in \mathcal{T}$ and $c_j=0$ whenever $j\not\in\mathcal{T}$. Given $Y\in\z_{>0}$, we are interested in averages of the form 
 \begin{equation}
     \mathcal{A}(\mathcal{R}(\bm{C}), \widehat{Y})\coloneqq \sum_{\substack{\bm{c}\in\mathcal{R}(\bm{C})\\ F^*(\bm{c})\neq 0}}\sum_{\substack{r\in\O\\|r|= \widehat{Y}}}|S_{r}(\bm{c})|,
 \end{equation}
 where $r$ is restricted to square-full polynomials.
 \begin{lemma}\label{Le: ExpSumAverageSquarefull}
 With the notation from above, we have 
 \[
 \mathcal{A}(\mathcal{R}(\bm{C}),\widehat{Y})\ll_\varepsilon \widehat{Y}^{1+n/2+(n-t)/6}(\widehat{Y}\widehat{C})^{\varepsilon}\#\mathcal{R}(\bm{C}),
 \]
 where $\widehat{C}=\max_{i\in\mathcal{T}}\widehat{C}_i$.
 \end{lemma}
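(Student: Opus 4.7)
The plan is to combine multiplicativity with the pointwise estimates of Lemma~\ref{Le: ExpSumEstimate}, and then stratify the sum over $\bm{c}$ according to the divisibility pattern. By~\eqref{Eq: S_r(c) multiplicative} we write $S_r(\bm{c}) = \prod_{\varpi^{k_\varpi}\|r} S_{\varpi^{k_\varpi}}(\bm{c})$ and bound each prime-power factor separately. For each $\varpi\mid r$ and each $\bm{c}\in \mathcal{R}(\bm{C})$, set $h_\varpi = v_\varpi((\varpi^{k_\varpi},\bm{c}))$ and let $m_\varpi^{\ast}$ be the number of indices $i\in\{1,\dots,n\}$ satisfying $(\varpi^{k_\varpi},c_i)=\varpi^{h_\varpi}$. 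The crucial observation is that the $n-t$ zero coordinates $c_j$ with $j\notin\mathcal{T}$ give $(\varpi^{k_\varpi},c_j)=\varpi^{k_\varpi}$, so they contribute to $m_\varpi^{\ast}$ only in the exceptional case $h_\varpi = k_\varpi$; otherwise $m_\varpi^{\ast}\leq t$. Applying Lemma~\ref{Le: ExpSumEstimate} with $m=m_\varpi^{\ast}$ yields
\[
|S_{\varpi^{k_\varpi}}(\bm{c})|\ll |\varpi|^{k_\varpi(1+2n/3)+ m_\varpi^{\ast}(h_\varpi/4- k_\varpi/6)}.
\]

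Next I would stratify the sum $\sum_{\bm{c}}|S_r(\bm{c})|$ by the joint $\varpi$-valuation profile $(v_\varpi(c_i))_{i\in\mathcal{T},\,\varpi\mid r}$ of $\bm{c}$. For a prescribed profile, the number of admissible $\bm{c}\in \mathcal{R}(\bm{C})$ is bounded by $\#\mathcal{R}(\bm{C})\prod_{i\in\mathcal{T}}\prod_{\varpi\mid r}|\varpi|^{-v_\varpi(c_i)}$. Combining this count with the pointwise bound above and carrying out the resulting geometric sums, the dominant contribution arises from the generic profile $v_\varpi(c_i)=0$ for all $i\in\mathcal{T}$ and all $\varpi\mid r$. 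In this regime $h_\varpi=0$, $m_\varpi^{\ast}=t$, and the exponent per prime power becomes $k_\varpi(1+2n/3-t/6) = k_\varpi(1+n/2+(n-t)/6)$; multiplying across all $\varpi^{k_\varpi}\|r$ produces the target factor $\widehat{Y}^{1+n/2+(n-t)/6}$. Non-generic profiles contribute no more than the generic one, since raising $v_\varpi(c_i)$ by one shrinks the count of $\bm{c}$ by a factor $|\varpi|^{-1}$ that matches or outweighs the corresponding loss in the pointwise estimate. The usual divisor bound on the number of square-full $r$ with $|r|=\widehat{Y}$ is then absorbed into the final $(\widehat{Y}\widehat{C})^\varepsilon$ factor.

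The main obstacle is verifying that the exceptional profiles with $h_\varpi=k_\varpi$, where Lemma~\ref{Le: ExpSumEstimate} degrades to a pointwise bound of $|\varpi|^{k_\varpi(4+3n)/4}$, are indeed compensated by the sharper count restriction $\varpi^{k_\varpi}\mid c_i$ for every $i\in\mathcal{T}$, which shrinks the number of admissible $\bm{c}$ by $|\varpi|^{-k_\varpi t}$. A direct comparison $k_\varpi(4+3n-4t)/4 \leq k_\varpi(6+4n-t)/6$, valid whenever $n\leq 10t$, confirms that the exceptional profiles never exceed the generic exponent in the range of parameters relevant to us. Analogous case splits for intermediate profiles with $0<h_\varpi<k_\varpi$, together with the fact that $\#\{r\in\mathcal{O}\colon|r|=\widehat{Y},\,r\text{ square-full}\}\ll\widehat{Y}^{\varepsilon/2}\cdot\widehat{Y}^{1/2}$, then yield the claimed estimate.
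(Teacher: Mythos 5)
Your outline reproduces the pointwise exponent correctly, and your bookkeeping of $m_\varpi^\ast$ (noting that the $n-t$ coordinates outside $\mathcal{T}$ contribute only when $h_\varpi=k_\varpi$, and that otherwise $m_\varpi^\ast\le t$) is sound, including the arithmetic $1+2n/3-t/6 = 1+n/2+(n-t)/6$. But the proof as written has a genuine gap: you never bring in the vanishing statement $S_{\varpi^k}(\bm{c})=0$ for $k\ge 2$ whenever $\varpi\nmid F^*(\bm{c})$, and this is precisely the ingredient that makes the lemma true.

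To see why this matters, look at the generic stratum $h_\varpi=0$, $m_\varpi^\ast=t$. There the pointwise bound gives $|S_r(\bm{c})|\ll |r|^{1+n/2+(n-t)/6+\varepsilon}$ with \emph{exactly} the exponent appearing in the lemma, and the generic $\bm{c}$'s are essentially all of $\mathcal{R}(\bm{C})$. You then sum over square-full $r$ with $|r|=\widehat{Y}$, and you yourself note that there are $\asymp\widehat{Y}^{1/2}$ of these. Multiplying through gives $\widehat{Y}^{1/2}\cdot\widehat{Y}^{1+n/2+(n-t)/6}(\widehat{Y}\widehat{C})^\varepsilon\#\mathcal{R}(\bm{C})$, which is larger by a factor $\widehat{Y}^{1/2}$ than the target. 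There is nothing in your argument that cancels this, because the restriction $F^*(\bm{c})\ne 0$ and the accompanying vanishing of $S_{\varpi^k}(\bm{c})$ are never invoked. The paper's proof (following Heath-Brown) decomposes $r=r_*\prod r_i$ exactly so that the $\eta(r,\bm{c})$ factor coming from this vanishing forces all primes of $r_*$ to divide $F^*(\bm{c})$; for fixed $\bm{c}$ this leaves only $O((\widehat{Y}\widehat{C})^\varepsilon)$ admissible $r_*$, which is what collapses the $\widehat{Y}^{1/2}$-type count down to $\widehat{Y}^\varepsilon$. Without this mechanism your stratification, however carefully carried out, cannot recover the claimed exponent.

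A secondary issue: your claimed inequality for the exceptional stratum requires $n\le 10t$, which fails when $t=0$; as you would discover, this case is vacuous since $\mathcal{R}(\bm{C})=\{\bm{0}\}$ and $F^*(\bm{0})=0$, but the remark deserves to be made. The larger point stands, though: you must use the divisor-like constraint $\varpi\mid F^*(\bm{c})$ on the primes of $r$, not merely the magnitude estimates of Lemma~\ref{Le: ExpSumEstimate}.
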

 The proof of Lemma~\ref{Le: ExpSumAverageSquarefull} is along the same lines as that of~\cite[Lemma 5.2]{DiagonalCubicHB}, and so we shall be brief. 
 \begin{proof}
 First of all, we introduce some notation. Fix $\bm{c}\in\mathcal{R}(\bm{C})$. For $r\in\O$ monic square-full, we write
  \begin{equation}\label{Eq: ProdDecomposition}
 r=r_*\prod_{i\in\mathcal{T}}r_i,
 \end{equation}
where the various coprime factors $r_*, r_i$ are defined as follows. We let $r_*$ be the product of those monic prime powers $\varpi^k$ such that $\varpi^k\parallel r$ and $k=2$ or $\varpi\nmid c_i$ for $i\in\mathcal{T}$. Moreover, for $i\in\mathcal{T}$, we define $r_i$ to be the product of monic prime powers $\varpi^k\parallel r$ such that $\varpi\mid c_i$, but $\varpi\nmid c_j$ for any $j\in\mathcal{T}$ with $j<i$. In particular, any $r_i$ is cube-full. Since all the factors in~\eqref{Eq: ProdDecomposition} are coprime, it follows from~\eqref{Eq: S_r(c) multiplicative} that 
 \[
 S_r(\bm{c})=S_{r_*}(\bm{c})\prod_{i\in\mathcal{T}}S_{r_i}(\bm{c}).
 \]
 Using the fact that $S_{\varpi^k}(\bm{c})=0$ if $\varpi\nmid F^*(\bm{c})$ for $k\geq 2$ and the estimates~\eqref{Eq: primePowerEstimates} and~\eqref{Eq: HuaEstimate}, we deduce that 
 \[
 S_r(\bm{c})\ll \eta(r,\bm{c})|r|^{1+n/2+(n-t)/6+\varepsilon}\prod_{i,j\in\mathcal{T}}\lvert\{r_i,c_j\}\rvert^{1/4},
 \]
 where $\eta(r,\bm{c})=1$ if $\varpi\mid F^*(\bm{c})$ for all primes $\varpi\mid r_*$ and $\eta(r,\bm{c})=0$ else. Let us now fix the absolute values of $r_*$ and of the various $r_i$'s, say $|r_*|=\widehat{Y}_*$ and $|r_i|=\widehat{Y}_i$, and denote their contribution to $\mathcal{A}(\mathcal{R}(\bm{C}),\widehat{Y})$ by $\mathcal{A}(Y_*, \bm{Y})$, where $\bm{Y}=(Y_i)_{i\in\mathcal{T}}$. We then have
 \[
 \mathcal{A}(Y_*,\bm{Y})\ll \widehat{Y}^{1+n/2+(n-t)/6+\varepsilon}\sum_{\substack{\bm{c}\in\mathcal{R}(\bm{C})\\F^*({\bm{c})\neq 0}}}\sum_{\substack{|r_i|=\widehat{Y}_i\\ i\in\mathcal{T}}}\prod_{i,j\in\mathcal{T}}\lvert\{r_i,c_j\}\rvert^{1/4}S_{\bm{c}},
 \]
 where we have suppressed the dependence of $r_*$ and of the $r_i$'s on $\bm{c}$ in the notation and where
 \[
 S_{\bm{c}} = \sum_{|r_*|=\widehat{Y}_*}\eta({r,\bm{c}}).
 \]
Heath-Brown's argument for estimating $S_{\bm{c}}$ goes through almost verbatim in our setting and gives $S_{\bm{c}}\ll (\widehat{Y}\widehat{C})^\varepsilon$. Therefore, we have
\[
\mathcal{A}(Y_*,\bm{Y})\ll \widehat{Y}^{1+n/2+(n-t)/6+\varepsilon}(\widehat{Y}\widehat{C})^{\varepsilon}\sum_{\substack{\bm{c}\in\mathcal{R}(\bm{C})\\F^*({\bm{c})\neq 0}}}\sum_{\substack{|r_i|=\widehat{Y}_i\\ i\in\mathcal{T}}}\prod_{i,j\in\mathcal{T}}\lvert\{r_i,c_j\}\rvert^{1/4}.
\]
To achieve the desired upper bound, we shall now only require that each $r_i$ is cube-full and that $\varpi\mid c_i$ whenever $\varpi\mid r_i$, so that in particular the $r_i$'s do not depend on $\bm{c}$ anymore. Thus, after setting 
\[
S(j)=\sum_{|c_j|=\widehat{C}_j}\prod_{i\in\mathcal{T}}|\{r_i,c_j\}|^{1/4},
\]
we obtain
\begin{equation}
    \mathcal{A}(Y_*,\bm{Y})\ll \widehat{Y}^{1+n/2+(n-t)/6+\varepsilon}(\widehat{Y}\widehat{C})^{\varepsilon}\sum_{\substack{|r_i|=\widehat{C}_i\\i\in\mathcal{T}}}\prod_{j\in\mathcal{T}}S(j).
\end{equation}
It is again straightforward to verify that Heath-Brown's argument continues to hold in our setting, yielding
\[
\sum_{\substack{|r_i|=\widehat{C}_i\\i\in\mathcal{T}}}\prod_{j\in\mathcal{T}}S(j)\ll \widehat{Y}^{(n+1)\varepsilon}\#\mathcal{R}(\bm{C}).
\]
With a new choice of $\varepsilon$, we conclude
\[
    \mathcal{A}(Y_*,\bm{Y})\ll\widehat{Y}^{1+n/2+(n-t)/6}(\widehat{Y}\widehat{C})^{\varepsilon}\#\mathcal{R}(\bm{C}),
\]
so that the statement of the lemma follows from the fact that there are only $\widehat{Y}^\varepsilon$ possibilities for admissible tuples $(Y_*,\bm{Y})$. 
 \end{proof}
 \section{Rational points on the dual hypersurface}\label{Se: DualForm}
In this section we study roots of the dual form $F^*$ of $F$ that was defined in~\eqref{Def: DualForm}. Our first goal is to find an upper bound for the number of solutions $F^*(\bm{c}) = 0$ with $\lvert \bm{c} \rvert \leq \widehat{C}$ when $\cha(K)>3$. In order to achieve this we closely follow the strategy of Heath-Brown~\cite[Section 7]{heath1983cubic}. The result of Lemma~\ref{lem.lin_ind_square_roots} is standard over the rational numbers, however we could not find a proof in the literature for our setting and so we included a proof here.

If $n = 4$ and $\cha(K)>3$ we call a solution $\bm{c}$ to $F^*(\bm{c}) = 0$ \emph{special} if $c_1, \hdots, c_4 \neq 0$ and  there are indices $i,j,k,l$ such that $\{i,j,k,l\} = \{1,2,3,4\}$ and
\[
(F_i^{-1} c_i^3)^{1/2} + (F_j^{-1} c_j^3)^{1/2} = (F_k^{-1} c_k^3)^{1/2} + (F_l^{-1} c_l^3)^{1/2} = 0
\]
holds for a suitable choice of square roots. We call a solution $\bm{c}$ to $F^*(\bm{c}) = 0$ \emph{ordinary} if it is not special. In particular, if $\cha(K)=2$ every solution is ordinary.
\begin{lemma} \label{lem.number_of_solutions_dual_form}
Assume $\cha(K)>3$. If $n = 6$, then the number of solutions to $F^*(\bm{c}) = 0$ with $\lvert \bm{c} \rvert \leq \widehat{C}$ is bounded by $O(\widehat{C}^{3+\varepsilon})$. Moreover, if $n = 4$, then the number of ordinary solutions to $F^*(\bm{c}) = 0$ with $\lvert \bm{c} \rvert \leq \widehat{C}$ is bounded by $O(\widehat{C}^{1+\varepsilon})$.
\end{lemma}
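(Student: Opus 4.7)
The plan is to follow Heath-Brown's strategy from~\cite[\S 7]{heath1983cubic} in the integer setting, adapted to $K=\F_q(t)$. First I will record a preparatory result (Lemma~\ref{lem.lin_ind_square_roots}): for $a_1,\dots,a_m\in K^\times$, the elements $\sqrt{a_1},\dots,\sqrt{a_m}$ are $K$-linearly dependent in $\overline{K}$ if and only if some $a_i$, or some ratio $a_i/a_j$ with $i\neq j$, lies in $K^{\times 2}$. This is the standard multiquadratic-extension argument, which goes through verbatim here since $\cha(K)\neq 2$ and $\O$ is a UFD. Given a solution $\bm{c}$ of $F^*(\bm{c})=0$ with all $c_i\neq 0$, the factorisation~\eqref{Def: DualForm} yields a sign vector $\bm{\eta}\in\{\pm 1\}^n$ and a choice of square roots with $y_i^2=F_i^{-1}c_i^3\in K^\times$ and $\sum_i\eta_iy_i=0$. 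The lemma then forces a pair $(i,j)$ with $y_i^2/y_j^2=(F_j/F_i)(c_i/c_j)^3\in K^{\times 2}$, equivalently $F_iF_jc_ic_j\in K^{\times 2}$, since a cube is a square if and only if its base is. I will also need the elementary estimate that, for any fixed $a\in K^\times$, the number of $c\in\O$ with $|c|\leq\widehat{C}$ lying in the coset $aK^{\times 2}$ is $O(\widehat{C}^{1/2+\varepsilon})$.

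For $n=4$ and ordinary $\bm{c}$: iterating the linear-independence argument shows that if all $c_i\neq 0$ and $\bm{c}$ is not special, then all four $y_i$ must be simultaneously $K$-proportional; otherwise the relation $\sum_i\eta_iy_i=0$ would split into two cancelling pairs, producing a special solution. This forces $F_iF_jc_ic_j\in K^{\times 2}$ for every pair, allowing the uniform parametrisation $F_ic_i=F_1c_1\cdot k_i^2$ with $k_1=1$ and $k_i\in K^\times$. Substituting back into $\sum_i\eta_iy_i=0$ turns the relation into the diagonal cubic $\sum_{i=1}^{4}\eta_i\bigl(\prod_{j\neq i}F_j\bigr)^2 k_i^3=0$, with the $k_i$ essentially integer (their denominators are bounded by divisors of the $F_i$) and $|k_i|\ll(\widehat{C}/|c_1|)^{1/2}$. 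For each $c_1$, fixing two of the $k_i$ and solving the residual cubic for the third (cube roots are unique in $K$ since $\cha(K)>3$) gives $O(\widehat{C}/|c_1|)$ admissible triples. Summing over $c_1\in\O$ with $|c_1|\leq\widehat{C}$ yields $\widehat{C}\sum_{c_1}|c_1|^{-1}\ll\widehat{C}^{1+\varepsilon}$, as required.

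For $n=6$: the linear-independence argument partitions the six $y_i$'s into blocks of pairwise $K$-proportional elements of size $\geq 2$, with possible block-partitions of $6$ being $2{+}2{+}2$, $2{+}4$, $3{+}3$, and $6$. I would run through each partition type, parametrising block by block as in the $n=4$ case (one scale variable per block, internal square-class constraints within each block, and a cross-block cubic relation providing the remaining equation), and combining with the coset estimate $O(\widehat{C}^{1/2+\varepsilon})$ above to obtain $O(\widehat{C}^{3+\varepsilon})$ for each type. Solutions with some $c_i=0$ reduce inductively to the same problem in fewer variables. The main obstacle I anticipate is the $n=6$ case analysis, in particular the richest partition $2{+}2{+}2$: one has to verify that the combination of three independent square-class constraints with the cross-block cubic relation genuinely cuts the count down to $\widehat{C}^{3+\varepsilon}$, and to do the bookkeeping of divisibility conditions ensuring that the parametrising rationals yield integer $c_i$.
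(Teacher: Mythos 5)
Your overall strategy --- reducing to linear independence of $\sqrt{m_k}$ over $K$, splitting into square-free classes, and parametrising block by block --- is the right one and matches the paper's, but there is a genuine gap in the $n=4$ counting step.  When all four coordinates lie in one square class (which is the only ordinary configuration with all $c_i\neq 0$), you reduce to the diagonal cubic $\sum_{i=1}^{4}\eta_i\bigl(\prod_{j\neq i}F_j\bigr)^2 s_i^3 = 0$ in four scale variables $s_i$, with $\lvert s_i\rvert\leq\widehat{E}:=(\widehat{C}/\lvert m\rvert)^{1/2}$ where $m$ is the common square-free part of the $F_ic_i$.  Your count ``fix two, solve for the third'' only delivers the trivial bound $\widehat{E}^{3}$ per square-free class, and $\sum_{m}\widehat{E}^{3}=\widehat{C}^{3/2}\sum_m\lvert m\rvert^{-3/2}\ll\widehat{C}^{3/2}$, which is not the claimed $\widehat{C}^{1+\varepsilon}$.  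The paper instead invokes the function-field version of Hua's lemma (together with H\"older) to obtain $\widehat{E}^{2+\varepsilon}$ for $2\leq\#\mathcal{I}(k)\leq 4$ variables; that extra saving of one full power of $\widehat{E}$ is essential, and its absence is the gap.  (It is precisely for this step that the hypothesis $\cha(K)>3$ is needed, since the Weyl differencing in Hua's lemma requires it.)

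Two smaller points.  First, your assertion that the $k_i$ are ``essentially integers with denominators bounded by divisors of the $F_i$'' is incorrect: if $k_i^2=F_ic_i/(F_1c_1)$ then the denominator of $k_i$ can be as large as the square part of $F_1c_1$, i.e.\ up to size $\lvert c_1\rvert^{1/2}$, so the number of admissible $k_i$ is $\asymp(\widehat{C}/\lvert m\rvert)^{1/2}$, not $(\widehat{C}/\lvert c_1\rvert)^{1/2}$.  This is exactly why the paper parametrises with the square-free part $m$ and integer scale variables $e_i$ (equivalently my $s_i$) instead of with $c_1$ and rational ratios $k_i$.  Second, your anticipated difficulty for $n=6$ in the $2{+}2{+}2$ partition is actually illusory: linear independence of $\sqrt{m_1},\sqrt{m_2},\sqrt{m_3}$ forces each block relation $\eta_{2j-1}y_{2j-1}+\eta_{2j}y_{2j}=0$ to hold separately, so there is no cross-block coupling; each size-two block contributes $O(\widehat{E}_j)$ with one free scale variable, and $\prod_j\sum_{m_j}(\widehat{C}/\lvert m_j\rvert)^{1/2}\ll\widehat{C}^{3+\varepsilon}$ falls out without Hua's lemma.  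It is the $n=4$ case, not $n=6$, where the extra analytic input is indispensable.
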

Before we can begin with the proof of this lemma, we need an auxiliary result. In the following we fix $\zeta \in \mathbb{F}_q^\times$ to be a representative of a non-trivial element in $\mathbb{F}_q^\times/\mathbb{F}_q^{\times,2}$. If $\mathrm{char}(\mathbb{F}_q) >2$ this certainly exists --- we may for example pick $\zeta$ to be a primitive root of $\mathbb{F}_q^\times$.
\begin{lemma} \label{lem.lin_ind_square_roots}
Suppose $\cha(K)>3$. Let $m_1, \hdots, m_n \in \mathcal{O}$ be a collection of distinct square-free polynomials such that each $m_i$ is either monic or has leading coefficient $\zeta$. Then $\{ \sqrt{m_1}, \hdots, \sqrt{m_n} \}$ is a linearly independent set over $K$.
\end{lemma}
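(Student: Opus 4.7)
The approach is to reduce the claimed linear independence to showing that the images $\overline{m_1},\ldots,\overline{m_n}$ in $K^\times/K^{\times,2}$ are pairwise distinct. Indeed, since $\cha(K)\neq 2$, a standard fact on multiquadratic extensions asserts that if $V\le K^\times/K^{\times,2}$ denotes the (elementary abelian $2$-)subgroup generated by $\overline{m_1},\ldots,\overline{m_n}$, then $L\coloneqq K(\sqrt{m_1},\ldots,\sqrt{m_n})$ has degree $|V|$ over $K$, and for any choice of representatives in $K^\times$ the set $\{\sqrt{v}:v\in V\}$ is a $K$-basis of $L$. Taking $m_i$ itself as the representative of $\overline{m_i}$, the elements $\sqrt{m_1},\ldots,\sqrt{m_n}$ then appear as distinct basis elements, hence are $K$-linearly independent, as soon as the $\overline{m_i}$ are pairwise distinct in $V$.

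It therefore suffices to prove that $m_im_j$ is never a square in $K$ when $i\neq j$. Since $m_im_j\in\mathcal{O}$ and $\mathcal{O}=\F_q[t]$ is a unique factorisation domain, this is equivalent to $m_im_j$ not being a square in $\mathcal{O}$, which requires both (a) its leading coefficient to be a square in $\F_q^\times$, and (b) every irreducible factor of $m_im_j$ to occur with even exponent.

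For (a): since each leading coefficient lies in $\{1,\zeta\}$, the product $\operatorname{lc}(m_i)\operatorname{lc}(m_j)$ belongs to $\{1,\zeta,\zeta^2\}$; as $\zeta$ is a non-square in $\F_q^\times$, this is a square precisely when $\operatorname{lc}(m_i)=\operatorname{lc}(m_j)$. Assuming this equality, condition (b) combined with the squarefreeness of $m_i$ and $m_j$ forces them to share the same set of prime divisors in $\mathcal{O}$; together with (a) this yields $m_i=m_j$, contradicting distinctness. The main conceptual subtlety is the clean use of the basis result for multiquadratic extensions: the images $\overline{m_i}$ need not be $\F_2$-linearly independent in $K^\times/K^{\times,2}$ (e.g.\ one may very well have $\overline{m_k}=\overline{m_i m_j}$ for some indices), but only pairwise distinct. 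The normalisation of leading coefficients to $\{1,\zeta\}$ together with the squarefreeness hypothesis is precisely what lets (a) and (b) combine to detect this distinctness.
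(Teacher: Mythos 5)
Your proof is correct, and it takes a genuinely different route from the paper's. The paper argues by induction on $n$: assuming a dependence $\sum_k\lambda_k\sqrt{m_k}=0$ with all $\lambda_k\neq 0$, it locates two indices $i,j$ with $m_i/m_j\notin\F_q^\times$, invokes the $n=3$ case to see $K(\sqrt{m_i},\sqrt{m_j})/K$ has degree $4$, and applies a Galois automorphism that flips $\sqrt{m_i}$ but fixes $\sqrt{m_j}$ to produce a shorter dependence, contradicting the inductive hypothesis. Your approach instead appeals wholesale to Kummer theory for the multiquadratic extension $L=K(\sqrt{m_1},\dots,\sqrt{m_n})$: if $V\le K^\times/K^{\times,2}$ is the subgroup generated by the classes $\overline{m_i}$, then $[L:K]=\lvert V\rvert$ and, fixing one representative and one square root per class, the resulting $\lvert V\rvert$ square roots lie in distinct $\Gal(L/K)$-eigenspaces and hence form a $K$-basis of $L$. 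This correctly reduces the lemma to the pairwise distinctness of the $\overline{m_i}$ in $K^\times/K^{\times,2}$, which your leading-coefficient and squarefree-radical argument handles cleanly; the normalization $\operatorname{lc}(m_i)\in\{1,\zeta\}$ is precisely what rules out the units' ambiguity. What each approach buys: the paper's induction is essentially self-contained and only needs the $n\leq 3$ cases by hand, while your argument is shorter and more conceptual but outsources the main content to a standard (and genuinely stronger) structural fact about multiquadratic extensions over fields of characteristic $\neq 2$. Both are valid; yours is arguably the more transparent reason the lemma is true.
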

\begin{proof}
We will prove the result by induction on $n$. The cases $1 \leq n \leq 3$ can easily be verified directly, so suppose $n \geq 4$. Assume for a contradiction that $\lambda_1, \hdots, \lambda_n \in K$ not all zero are such that
\[
\sum_{k=1}^n \lambda_k \sqrt{m_k} = 0.
\]
Note that we  may assume $\lambda_i \neq 0$ for all $i = 1, \hdots, n$ since otherwise the result would follow immediately from the induction hypothesis. In particular it is sufficient to show that there exists some index $k$ with $\lambda_k = 0$.
Since $n \geq 3$ there exist two distinct indices $i,j$ such that $m_i/m_j \notin \mathbb{F}_q^\times$. From the $n = 3$ case it follows that $K_{i,j} \coloneqq K(\sqrt{m_i}, \sqrt{m_j})$ is a Galois extension of degree $4$ over $K$. Thus there exists $\sigma \in \Gal(K_{i,j}/K)$ such that $\sigma(\sqrt{m_i}) = -\sqrt{m_i}$ and $\sigma(\sqrt{m_j}) = \sqrt{m_j}$. We may lift this to an element $\tilde{\sigma} \in \Gal (K^s/K)$ where $K^s$ is the separable closure of $K$. Then we find
\[
0 = \tilde{\sigma} \left( \sum_{k=1}^n \lambda_k \sqrt{m_k}\right) + \sum_{k=1}^n \lambda_k \sqrt{m_k} = 2\lambda_j \sqrt{m_j} + \sum_{k \neq i,j} \widetilde{\lambda}_k \sqrt{m_k},
\]
where $\widetilde{\lambda}_k \in \{0,2 \lambda_k \}$. From the induction hypothesis we get $\lambda_j = 0$, which yields the desired result as remarked above.
\end{proof}

\begin{proof}[Proof of Lemma~\ref{lem.number_of_solutions_dual_form}]
First note that $F^*(\bm{c}) = 0$ if and only if
\begin{equation} \label{eq.dual_form_vanishing_condition}
(F_1^{-1}c_1^3)^{1/2} + \cdots + (F_n^{-1} c_n^3)^{1/2} = 0,
\end{equation}
for a suitable choice of square roots. Let $m_k \in \mathcal{O}$ be a square-free polynomial, which is either monic or has leading coefficient $\zeta$. Say $i \in \mathcal{I}(k)$ if there exists some $d_i \in \mathcal{O}$ such that $F_i c_i^3 = m_kd_i^2$. From Lemma~\ref{lem.lin_ind_square_roots} we find that~\eqref{eq.dual_form_vanishing_condition} implies
\[
\sum_{i \in \mathcal{I}(k)}F_i^{-1}d_i = 0.
\]
We have $c_i^2 \mid m_k d_i^2$ and consequently $c_i \mid d_i$ since $m_k$ is square-free. Thus there exists $e_i \in \mathcal{O}$ such that $d_i = c_i e_i$. Substituting this into the relation $F_i c_i^3 = m_kd_i^2$ we find $c_i = m_k F_i^{-1} e_i^2$ and hence $d_i = c_ie_i = m_k F_i^{-1} e_i^3$. Therefore $F_i^{-1}d_i = m_k F_i \left( \frac{e_i}{F_i} \right)^3$ and the preceding display gives
\begin{equation} \label{eq.another_cubic_e_i}
\sum_{i \in \mathcal{I}(k)}F_i \left( \frac{e_i}{F_i} \right)^3 = 0.
\end{equation}
We will now estimate the number of solutions $\bm{e}$ to~\eqref{eq.another_cubic_e_i} such that $\lvert \bm{e} \rvert \leq \widehat{E} = \sqrt{\widehat{C}/\lvert m_k \rvert}$. 
This will then enable us to estimate the number of solutions of~\eqref{eq.dual_form_vanishing_condition}. Via Hölder's inequality and Hua's Lemma in this context (cf.~\cite[Lemma 5.12]{Browning2021}) we find
\[
\# \left\{\lvert \bm{e} \rvert \leq \widehat{E} \colon \sum_{i \in \mathcal{I}(k)}F_i \left( \frac{e_i}{F_i} \right)^3 = 0 \right\} \ll \begin{cases}
1 \quad &\text{if $\# \mathcal{I}(k) = 1$,} \\
\widehat{E}^{2+\varepsilon} &\text{if $2 \leq \# \mathcal{I}(k) \leq 4$,} \\
\widehat{E}^{\# \mathcal{I}(k)-2+\varepsilon} &\text{if $ 5 \leq \# \mathcal{I}(k) \leq 6$.}
\end{cases}
\]
Note that at this point it is crucial to assume $\cha(K)>3$, because the Weyl differencing argument in the proof of Hua's lemma breaks down otherwise. Therefore for a fixed partition $\bigsqcup_j \mathcal{I}(k_j) = \{1, \hdots, n\}$ corresponding to $\{m_{k_j} \}$  the number of $\lvert \bm{c} \rvert \leq \widehat{C}$ satisfying~\eqref{eq.dual_form_vanishing_condition} is bounded above by
\[
\prod_{j} \left( \frac{\widehat{C}}{\lvert m_{k_j} \rvert} \right)^{e_{k_j}/2+\varepsilon},
\]
where
\[
e_{k_j} = \begin{cases}
0, \quad &\text{if $\# \mathcal{I}(k_j) = 1$} \\
2, &\text{if $2 \leq \# \mathcal{I}(k_j) \leq 4$} \\
3, &\text{if $\# \mathcal{I}(k_j) = 5$} \\
4, &\text{if $\# \mathcal{I}(k_j) = 6$.}
\end{cases}
\]
By considering all possible square-free elements $\lvert m_{k_j} \rvert \ll \widehat{C}$, we see that the total number of solutions of~\eqref{eq.dual_form_vanishing_condition} corresponding to a fixed partition is bounded above by
\[
\sum_{\lvert m_{k_j} \rvert \leq \widehat{C}} \prod_{j} \left( \frac{\widehat{C}}{\lvert m_{k_j} \rvert} \right)^{e_{k_j}/2+\varepsilon} \ll \prod_j \widehat{C}^{e_{k_j}/2+\varepsilon}. 
\]
It is easily checked that for any possible partition this is bounded above by $O(\widehat{C}^{3+\varepsilon})$ if $n = 6$. Therefore the total number of solutions to $F^*(\bm{c}) = 0$ with $\lvert \bm{c} \rvert \leq \widehat{C}$ has the same upper bound. In the case $n=4$ one can similarly  obtain $O(\widehat{C}^{1+\varepsilon})$ for the number of solutions corresponding to any partition, except in the case where $\# \mathcal{I}(k_1) = \# \mathcal{I}(k_2) = 2$. But solutions arising from such partitions are precisely the special solutions. This finishes the proof of the lemma.
\end{proof}

 \section{Circle method}\label{Se:ActivationDelta}
As explained in the introduction, we are considering a diagonal cubic form $F\in\O[x_1,\dots,x_n]$ of the shape 
\[
F(\bm{x})=\sum_{i=1}^n F_ix_i^3,\quad F_i\in \O\setminus\{0\}.
\]
Recall from~\eqref{Eq: DeltaMethod} that the associated counting function can be written as 
\[
  N(w,P)=|P|^n\sum_{\substack{r\text{ monic}\\ |r|\leq \widehat{Q}}}|r|^{-n}\int_{|\theta|<|r|^{-1}\widehat{Q}^{-1}}\sum_{\bm{c}\in \O^n}S_r(\bm{c})I_r(\theta,\bm{c})\dd\theta.
\]
Throughout the parameter $Q$ is chosen in such a way that
\begin{equation}\label{Eq: SizeQ}
\lvert P\rvert^{3/2}\leq \widehat{Q} \leq q\lvert P\rvert^{3/2}
\end{equation}
ensuring that the measure of the set $\{|\theta|<|r|^{-1}\widehat{Q}^{-1}\}$ is $O(|P|^{-3})$ when $|r|=\widehat{Q}$. It follows from Lemma~\ref{lem.J_f_vanishing} that $I_r(\theta,\bm{c})$ vanishes unless $|\bm{c}| < |r||P|^{-1}\max\{q, H_F|P|^3\theta\}$. Since $H_F|P|^3 \lvert \theta \rvert \leq H_F |P|^3\widehat{Q}^{-1}|r|^{-1}$ and $|P|^3\widehat{Q}^{-1}|r|^{-1}\gg 1$, we can truncate the sum over $\bm{c}$ in~\eqref{Eq: DeltaMethod} at $|\bm{c}|\ll \widehat{C}$, where $\widehat{C}\coloneqq |P|^2\widehat{Q}^{-1}$.

We now split up $N(w,P)$ according to the quality of our available estimates into 
\[
N(w,P)=N_0(P) +E_1(P)+E_2(P),
\]
where 
\begin{align}
     N_0(P)&= |P|^n\sum_{\substack{r\text{ monic}\\ |r|\leq \widehat{Q}}}|r|^{-n}\int_{|\theta|<|r|^{-1}\widehat{Q}^{-1}}S_r(\bm{0})I_r(\theta,\bm{0})\dd\theta,\label{Def: N_0(P)}\\   E_1(P)&= |P|^n\sum_{\substack{r\text{ monic}\\ |r|\leq \widehat{Q}}}|r|^{-n}\int_{|\theta|<|r|^{-1}\widehat{Q}^{-1}}\sum_{\substack{\bm{c}\in\O^n\\F^*(\bm{c})\neq 0}}S_r(\bm{c})I_r(\theta,\bm{c})\dd\theta,\label{Def: E_1(P)}\\
    E_2(P)&= |P|^n\sum_{\substack{r\text{ monic}\\ |r|\leq \widehat{Q}}}|r|^{-n}\int_{|\theta|<|r|^{-1}\widehat{Q}^{-1}}\sum_{\substack{\bm{c}\in\O^n\setminus\{\bm{0}\}\\F^*(\bm{c})= 0}}S_r(\bm{c})I_r(\theta,\bm{c})\dd\theta.\label{Def: E_2(P)}
\end{align}
For $n=4$ we will later divide the term $E_2(P)$ into special and ordinary solutions of $F^*(\bm{c})=0$ as defined in Section~\ref{Se: DualForm}. Usually one expects that the main term in an asymptotic formula for $N(w,P)$ should come from $N_0(P)$. As we are only interested in an upper bound for $N(w,P)$, the contribution from $N_0(P)$ will be rather straightforward to deal with. Handling the terms $E_1(P)$, $E_2(P)$ turns out to be a more challenging task and will occupy most of the remainder of our work. For $E_1(P)$ we can make use of the full power of our exponential sum estimates, in particular we gain an extra saving when averaging $S_r(\bm{c})$ over $r$. This is not possible for $E_2(P)$, but we shall benefit from the sparsity of $\bm{c}$'s such that $F^*(\bm{c})=0$, at least for ordinary solutions when $n=4$.

\subsection{Contribution from $N_0(P)$}
For this we write again $r=r_1r_2$, where $r_1$ is cube-free and $r_2$ is cube-full. It thus follows from~\eqref{Eq: DeligneS_r(c)} and Lemma~\ref{Le: ExpSumEstimate} with $m=0$ that 
\[
S_r(\bm{c})\ll |r_1|^{1+n/2+\varepsilon}|r_2|^{1+2n/3+\varepsilon}.
\]
From Lemma~\ref{lem.I(0) estimate} we obtain the estimate $I_r(\bm{0})\ll \lvert P \rvert^{-3 + \varepsilon}$. We thus get
\begin{align*}
    N_0(P) &\ll |P|^{n-3+\varepsilon}\sum_{|r_1|\leq \widehat{Q}}|r_1|^{-n}S_{r_1}(\bm{c})\sum_{|r_2|\leq \widehat{Q}/|r_1|}|r_2|^{-n}S_{r_2}(\bm{c})\\
    &\ll |P|^{n-3+\varepsilon}\sum_{|r_1|\leq \widehat{Q}}|r_1|^{1-n/2}\sum_{|r_2|\leq \widehat{Q}/|r_1|}|r_2|^{1-n/3}\\
    &\ll |P|^{n-3+\varepsilon},
\end{align*}
since there are $O(\widehat{Y}^{1/3})$ cube-full $r_2$ with $|r_2|=\widehat{Y}$.

\subsection{Contribution from $E_1(P)$}
We begin with some preparations for the term $E_1(P)$.  Let $0\leq Y \leq Q$ and fix the absolute value of $r$ to be $\widehat{Y}$.  As in Section~\ref{Se: SectionSquarefullModuli}, we will also fix a set of indices $\mathcal{T}\subset \{1,\dots, n\}$ of cardinality $t$, as well as a tuple $\bm{C}=(C_i)_{i\in\mathcal{T}}$, where $1\leq C_i\leq C$ and denote by $\mathcal{R}(\bm{C})$ the set of vectors $\bm{c}=(c_1,\dots,c_n)\in\O^n$ such that $|c_i|=\widehat{C}_i$ if $i\in\mathcal{T}$ and $c_j=0$ if $j\not\in\mathcal{T}$. Let us put $\mathcal{C}=\max_{i\in\mathcal{T}}C_i$, so that $|\bm{c}|=\widehat{\mathcal{C}}$ whenever $\bm{c}\in\mathcal{R}(\bm{C})$. We then define $E_1(\mathcal{R}(\bm{C}),\widehat{Y})$ to be the contribution coming from $\bm{c}\in \mathcal{R}(\bm{C})$ and $|r|=\widehat{Y}$ in the definition of $E_1(P)$ given in~\eqref{Def: E_1(P)}. Explicitly, this means
\begin{equation}
    E_1(\mathcal{R}(\bm{C}),\widehat{Y}) = \frac{|P|^n}{\widehat{Y}^n}\sum_{\substack{\bm{c}\in\mathcal{R}(\bm{C})\\ F^*(\bm{c})\neq 0}}\sum_{\substack{r\text{ monic}\\|r|=\widehat{Y}}}S_r(\bm{c})I_{\widehat{Y}}(\bm{c}),
\end{equation}
where
\[
I_{\widehat{Y}}(\bm{c})=\int_{|\theta|<\widehat{Y}^{-1}\widehat{Q}^{-1}}I_r(\theta,\bm{c})\dd\theta.
\]
The definition of $I_{\widehat{Y}}(\bm{c})$ makes sense by Lemma~\ref{Le: I_r(theta,c) deps on |r|}, which shows that the value of the double integral in the definition of $I_{\widehat{Y}}(\bm{c})$ only depends on the absolute value of $r$ for $\bm{c}$ fixed.\\

Note that there are $Q+1\ll |P|^\varepsilon$ possibilities for $Y$ and $O(C^n)=O(|P|^\varepsilon)$ choices for $\bm{C}$. In particular, if we can show that $E_1(\mathcal{R}(\bm{C}),\widehat{Y})\ll |P|^{3n/4-3/2+\varepsilon}$ holds, then the same estimate will be true for $E_1(P)$ with a new value of $\varepsilon >0$. Next we tansform $E_1(P)$ in such a way that Lemma~\ref{Le: Hasse-WeilExpBound} and Lemma~\ref{Le: ExpSumAverageSquarefull} are applicable. For this we write $r=b_1'b_1r_2$, where $r_2$ is the square-full part of $r$ and $b_1'b_1$ is the square-free part of $r$. Moreover, if we let $S$ be the set of prime divisors of $\Delta_FF^*(\bm{c})$, then we further require that $(b_1,S)=1$ and each prime $\varpi\mid b_1'$ satisfies $\varpi\in S$. It then follows from~\eqref{Eq: S_r(c) multiplicative} that 
\begin{equation}\label{Eq: E_1(R(C),Y)}
    E_1(\mathcal{R}(\bm{C}),\widehat{Y}) = \frac{|P|^n}{\widehat{Y}^{(n-1)/2}}\sum_{\substack{\bm{c}\in\mathcal{R}(\bm{C})\\ F^*(\bm{c})\neq 0}} I_{\widehat{Y}}(\bm{c})\sum_{\substack{|r_2|\leq \widehat{Y}}}\frac{S_{r_2}(\bm{c})}{|r_2|^{(n+1)/2}}\sum_{\substack{|b_1'|\leq \frac{\widehat{Y}}{|r_2|}}}\frac{S_{b_1'}(\bm{c})}{|b_1'|^{(n+1)/2}}\sum_{\substack{|b_1|=\frac{ \widehat{Y}}{|r_2b_1'|}\\ (b_1,S)=1}}\frac{S_{b_1}(\bm{c})}{|b_1|^{(n+1)/2}}.
\end{equation}
We can now apply Lemma~\ref{Le: Hasse-WeilExpBound} to the innermost sum to obtain
\begin{equation}\label{Eq: b_1conribution}
    \sum_{\substack{|b_1|=\frac{ \widehat{Y}}{|r_2b_1'|}\\ (b_1,S)=1}}\frac{S_{b_1}(\bm{c})}{|b_1|^{(n+1)/2}}\ll \widehat{C}^{\varepsilon}(\widehat{Y}|r_2b_1'|^{-1})^{1/2+\varepsilon}.
\end{equation}
Moreover, by~\eqref{Eq: DeligneS_r(c)} and~\eqref{Eq: S_r(c) multiplicative} we also have 
\begin{equation}
    \sum_{\substack{|b_1'|\leq \frac{\widehat{Y}}{|r_2|}}}\frac{|S_{b_1'}(\bm{c})|}{|b_1'|^{n/2+1}}\ll |P|^{\varepsilon} \sum_{|b_1'|\leq \widehat{Y}/|r_2|}\frac{|(b_1', \nabla F^*(\bm{c}))|^{1/2}}{|b_1'|^{1/2}} \ll |P|^\varepsilon,
\label{Eq: b_1'contribution}
\end{equation}
where we used that there at most $O((\widehat{Y}|r_2|^{-1}|F^*(\bm{c})|)^\varepsilon)=O(|P|^{\varepsilon})$ possibilities for square-free $b_1'$ whose prime divisors are restricted to $S$ with $|b_1'|\leq \widehat{Y}|r_2|^{-1}$. After inserting~\eqref{Eq: b_1conribution} and~\eqref{Eq: b_1'contribution} into~\eqref{Eq: E_1(R(C),Y)}, we see that
\begin{align*}
    E_1(\mathcal{R}(\bm{C}),\widehat{Y})&\ll \frac{|P|^{n+\varepsilon}}{\widehat{Y}^{n/2-1}}\sum_{\substack{\bm{c}\in\mathcal{R}(\bm{C})\\ F^*(\bm{c})\neq 0}} |I_{\widehat{Y}}(\bm{c})|\sum_{\substack{|r_2|\leq \widehat{Y}}}\frac{|S_{r_2}(\bm{c})|}{|r_2|^{n/2+1}}.
\end{align*}
We can now estimate $I_{\widehat{Y}}(\bm{c})$ with Lemma~\ref{lem.integral_estimate_for_I_r}:
\begin{align*}
    I_{\widehat{Y}}(\bm{c})&\ll \widehat{Y}^{-1}\widehat{Q}^{-1}\prod_{i = 1}^n \min \left\{ \left( \frac{\lvert P \rvert \lvert \bm{c}\rvert}{\widehat{Y}} \right)^{-1/4}, \left( \frac{\lvert P \rvert \lvert c_i \rvert}{\widehat{Y}} \right)^{-1/2} \right\}\\
    &= \widehat{Y}^{-1}\widehat{Q}^{-1}\left(\frac{\widehat{Y}}{|P|\widehat{\mathcal{C}}}\right)^{(n-t)/4}\prod_{i\in\mathcal{T}}\min \left\{ \left( \frac{\lvert P \rvert  \widehat{\mathcal{C}}}{\widehat{Y}} \right)^{-1/4}, \left( \frac{\lvert P \rvert \widehat{C}_i}{\widehat{Y}} \right)^{-1/2} \right\},
\end{align*}
where we used that $\min\left\{ \left( \frac{\lvert P \rvert  \widehat{\mathcal{C}}}{\widehat{Y}} \right)^{-1/4}, \left( \frac{\lvert P \rvert \lvert c_i \rvert}{\widehat{Y}} \right)^{-1/2} \right\}=(|P|\widehat{\mathcal{C}} \, \widehat{Y}^{-1})^{-1/4}$ if $i\not\in\mathcal{T}$. Denote the last product above by $\Pi$. Then after dividing $r_2$ into $q$-adic ranges, Lemma~\ref{Le: ExpSumAverageSquarefull} implies
\begin{align*}
E_1(\mathcal{R}(\bm{C}),\widehat{Y})&\ll \frac{|P|^{n+\varepsilon}}{\widehat{Y}^{n/2}\widehat{Q}}\left(\frac{\widehat{Y}}{|P|\widehat{\mathcal{C}}}\right)^{(n-t)/4}\Pi\sum_{\substack{\bm{c}\in\mathcal{R}(\bm{C})\\ F^*(\bm{c})\neq 0}}\sum_{\substack{|r_2|\leq \widehat{Y}}}\frac{|S_{r_2}(\bm{c})|}{|r_2|^{n/2+1}}\\
&\ll \frac{|P|^{n+\varepsilon}}{\widehat{Y}^{n/2}\widehat{Q}}\left(\frac{\widehat{Y}}{|P|\widehat{\mathcal{C}}}\right)^{(n-t)/4}\widehat{Y}^{(n-t)/6}\Pi \#\mathcal{R}(\bm{C}).
\end{align*}
From the fact that $\#\mathcal{R}(\bm{C})\ll \prod_{i\in\mathcal{T}}\widehat{C}_i$ we deduce that
\begin{align*}
    \#\mathcal{R}(\bm{C})\Pi&\ll \prod_{i\in\mathcal{T}}\min \left\{ \widehat{C}_i\left(\frac{\widehat{Y}}{|P|\widehat{\mathcal
    C}}\right)^{1/4},\left(\frac{\widehat{C}_i\widehat{Y}}{|P|}\right)^{1/2}\right\}\\
    &\ll \widehat{\mathcal{C}} \,^t\left(\frac{\widehat{Y}}{|P|\widehat{\mathcal
    C}}\right)^{t/4}\min\left\{1,\frac{\widehat{Y}}{|P|\widehat{\mathcal{C}}}\right\}^{t/4},
\end{align*}
where we used that $\widehat{C}_i\leq \widehat{\mathcal{C}}$. Recalling~\eqref{Eq: SizeQ}, we therefore have
\[
E_1(\mathcal{R}(\bm{C}),\widehat{Y})\ll \frac{|P|^{n-3/2+\varepsilon}}{\widehat{Y}^{n/2}}\left(\frac{\widehat{Y}}{|P|\widehat{\mathcal{C}}}\right)^{n/4}\widehat{Y}^{(n-t)/6}\widehat{\mathcal{C}}\,^t\min\left\{1,\frac{\widehat{Y}}{|P|\widehat{\mathcal{C}}}\right\}^{t/4}.
\]
One easily sees that the expression above is maximal either at $t=0$ or $t=n$. For $t=0$, we get
\begin{align*}
    \frac{|P|^{n-3/2+\varepsilon}}{\widehat{Y}^{n/2}}\left(\frac{\widehat{Y}}{|P|\widehat{\mathcal{C}}}\right)^{n/4}\widehat{Y}^{n/6}& = |P|^{3n/4-3/2+\varepsilon} \widehat{Y}^{-n/12}\widehat{\mathcal{C}}^{-n/4}\\
    &\ll |P|^{3n/4-3/2+\varepsilon}
\end{align*}
as desired. For $t=n$, we have 
\begin{align*}
    \frac{|P|^{n-3/2+\varepsilon}}{\widehat{Y}^{n/2}}\left(\frac{\widehat{Y}}{|P|\widehat{\mathcal{C}}}\right)^{n/4}\widehat{\mathcal{C}}^n\min\left\{1,\frac{\widehat{Y}}{\widehat{\mathcal{C}}|P|}\right\}^{n/4}&\ll |P|^{n/2-3/2+\varepsilon}\widehat{\mathcal{C}}^{n/2}\\
    &\ll |P|^{3n/4-3/2+\varepsilon}
\end{align*}
since $\widehat{\mathcal{C}}\leq \widehat{C}\ll |P|^{1/2}$. This finishes our treatment of $E_1(P)$.
\subsection{Contribution from $E_2(P)$ for ordinary solutions} \label{subsec:E_2(P)}
Now we turn our attention to the term $E_2(P)$. For $n=4$ we further divide it into $E_2(P)=E_2^{\text{ord}}(P)+E_2^\text{spec}(P)$, where $E_2^\text{spec}(P)$ is restricted to special solutions of $F^*(\bm{c})=0$ in the sense of Section~\ref{Se: DualForm} and $E_2^\text{ord}(P)$ to ordinary solutions of $F^*(\bm{c})=0$. In this section we deal with $E_2(P)$ for $n=6$ and $E_2^\text{ord}(P)$ for $n=4$. \\

We shall again fix the absolute value of $r$ to be $\widehat{Y}$ for some $0\leq Y\leq Q$ and the absolute value of $\bm{c}$ to be $\widehat{\mathcal{C}}$ for some $0< \mathcal{C}\leq C$. We will then consider the sum
\[
E_2(Y,\mathcal{C})\coloneqq \frac{|P|^n}{\widehat{Y}^n}\sum_{\substack{|\bm{c}|=\widehat{\mathcal{C}}\\ F^*(\bm{c})=0}}\sum_{\substack{r\text{ monic}\\ |r|=\widehat{Y}}}S_r(\bm{c})I_{\widehat{Y}}(\bm{c}),
\]
where the sum over $\bm{c}$ is restricted to ordinary solutions of $F^*(\bm{c})=0$ for $n=4$. Once we have shown $E_2(Y,\mathcal{C})\ll |P|^{3n/4-3/2+\varepsilon}$ the same estimate will follow for $E_2(P)$ for $n=6$ and for $E_2^\text{ord}(P)$ for $n=4$, because there are only $O(|P|^\varepsilon)$ possible pairs of $Y$'s and $\mathcal{C}$'s.\\

\begin{lemma} \label{lem.few dual form solutions gives good E_2}
    Let $F$ be a non-singular cubic form in $4$ or $6$ variables, and let $F^*$ be its dual form. Suppose there exists some $\eta >0$ such that for any $\widehat{\mathcal{C}} \geq 1$  the following bound holds
    \[
    \# \{ \bm{x} \in \mathcal{O}^n \colon \text{$\bm{x}$ is an ordinary solution to } F^*(\bm{x}) = 0 , \lvert \bm{x} \rvert \leq \widehat{\mathcal{C}} \} \ll \widehat{\mathcal{C}}^{n-3+\eta}.
    \]
    Then we have
    \[
    E_2(P) \ll \lvert P \rvert^{3n/4-3/2 +\eta/2+ \varepsilon}.
    \]
\end{lemma}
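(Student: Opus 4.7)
The plan is to closely mirror the analysis of $E_1(P)$ from the previous subsection, making two adjustments forced by the restriction to $\bm{c}$ with $F^*(\bm{c})=0$.  First, Lemma~\ref{Le: Hasse-WeilExpBound} is no longer applicable, so the innermost $b_1$-sum analogous to~\eqref{Eq: b_1conribution} must instead be bounded pointwise using Deligne's estimate~\eqref{Eq: DeligneS_r(c)}, which yields
\[
\sum_{\substack{|b_1|=\widehat{Y}/|r_2 b_1'|\\(b_1,S)=1}} \frac{|S_{b_1}(\bm{c})|}{|b_1|^{(n+1)/2}} \ll \left(\widehat{Y}/|r_2b_1'|\right)^{1+\varepsilon}
\]
in place of the $(\widehat{Y}/|r_2b_1'|)^{1/2+\varepsilon}$ afforded by Hasse--Weil, thereby costing a factor of $\widehat{Y}^{1/2}$ in the final estimate.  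Second, in the $\bm{c}$-sum I would replace the trivial bound $\#\mathcal{R}(\bm{C})\ll \widehat{\mathcal{C}}^t$ with $\min\{\widehat{\mathcal{C}}^t,\widehat{\mathcal{C}}^{n-3+\eta+\varepsilon}\}$, invoking the hypothesis to count the ordinary solutions of $F^*(\bm{c})=0$ in the box $|\bm{c}|\leq \widehat{\mathcal{C}}$.

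Apart from these two modifications, the remainder of the analysis --- the bounds on $S_{r_2}(\bm{c})$ and $S_{b_1'}(\bm{c})$ from~\eqref{Eq: b_1'contribution} and Lemma~\ref{Le: ExpSumAverageSquarefull}, the integral estimate from Lemma~\ref{lem.integral_estimate_for_I_r}, and the optimization over dyadic configurations $\mathcal{R}(\bm{C})$ of support size $t$ --- proceeds exactly as in the treatment of $E_1(P)$. The resulting estimate for a fixed configuration is
\[
E_2(\mathcal{R}(\bm{C}),\widehat{Y}) \ll \frac{|P|^{n-3/2+\varepsilon}}{\widehat{Y}^{(n-1)/2}}\left(\frac{\widehat{Y}}{|P|\widehat{\mathcal{C}}}\right)^{n/4}\widehat{Y}^{(n-t)/6}\min\{\widehat{\mathcal{C}}^t,\widehat{\mathcal{C}}^{n-3+\eta+\varepsilon}\}\min\left\{1,\frac{\widehat{Y}}{|P|\widehat{\mathcal{C}}}\right\}^{t/4},
\]
differing from the analogous $E_1$ estimate by a factor of $\widehat{Y}^{1/2}$ (from the loss of Hasse--Weil) and with the $\widehat{\mathcal{C}}^t$ factor possibly sharpened to $\widehat{\mathcal{C}}^{n-3+\eta+\varepsilon}$.

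Completing the proof then reduces to optimizing this estimate over $t\in\{0,\ldots,n\}$ and the positions of the two minima.  Using $\widehat{\mathcal{C}}\ll |P|^{1/2}$ and $\widehat{Q}\asymp |P|^{3/2}$, each case yields $O(|P|^{3n/4-3/2+\eta/2+\varepsilon})$, with the $\eta/2$ loss arising from $\widehat{\mathcal{C}}^\eta\leq |P|^{\eta/2}$ in the regime where the hypothesis is the binding constraint.  Summing over the $O(|P|^\varepsilon)$ admissible pairs $(Y,\mathcal{C})$ and configurations $(\mathcal{T},\bm{C})$ then gives the claim.

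The main obstacle will be to verify that the $\widehat{Y}^{1/2}$ loss from replacing Hasse--Weil by pointwise Deligne is uniformly absorbed across parameter ranges: $\min\{\widehat{\mathcal{C}}^t,\widehat{\mathcal{C}}^{n-3+\eta+\varepsilon}\}$ must save enough relative to $\widehat{\mathcal{C}}^t$ alone.  The tight regime is $t$ close to $n$ with $\widehat{\mathcal{C}}$ close to $|P|^{1/2}$, where the hypothesis bound $\widehat{\mathcal{C}}^{n-3+\eta}$ is used and exactly contributes the allowed $|P|^{\eta/2}$ loss.
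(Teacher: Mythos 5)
Your plan to ``mirror the $E_1(P)$ analysis'' breaks down at multiple points where the $E_1$ machinery essentially uses $F^*(\bm{c})\neq 0$, not merely through Lemma~\ref{Le: Hasse-WeilExpBound}. First, the factorization $r=b_1'b_1 r_2$ in~\eqref{Eq: E_1(R(C),Y)} is defined via the set $S$ of prime divisors of $\Delta_F F^*(\bm{c})$; when $F^*(\bm{c})=0$, \emph{every} prime lies in $S$, so the $b_1$-piece collapses and $b_1'$ is the full square-free part. The estimate~\eqref{Eq: b_1'contribution} then fails completely: it bounds the number of admissible $b_1'$ by the number of square-free divisors of $F^*(\bm{c})$, which is what produces the crucial $O(|P|^\varepsilon)$, and this has no meaning when $F^*(\bm{c})=0$. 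Second, Lemma~\ref{Le: ExpSumAverageSquarefull} requires $F^*(\bm{c})\neq 0$: in its proof, the number of square-full moduli with $S_{\varpi^k}(\bm{c})\neq 0$ for $k\geq 2$ is controlled precisely because such $\varpi$ must divide the fixed nonzero polynomial $F^*(\bm{c})$ (via~\eqref{Eq: S_w^k(c) vanishes}), and this control evaporates when $F^*(\bm{c})=0$. Your proposed substitution of $\#\mathcal{R}(\bm{C})$ by $\min\{\widehat{\mathcal{C}}^t,\widehat{\mathcal{C}}^{n-3+\eta}\}$ is made inside a formula whose other factors ($\widehat{Y}^{(n-t)/6}$) are no longer valid.

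You also miss a key structural input that the paper's proof relies on: for an ordinary solution $\bm{c}$ of $F^*(\bm{c})=0$, the presence of the monomials $G_ix_i^D$ in $F^*$ forces at least two coordinates to be of maximal size $\widehat{\mathcal{C}}$. The paper uses this in two essential places: (i) it sharpens the integral bound to $I_{\widehat{Y}}(\bm{c})\ll (\widehat{Y}/(|P|\widehat{\mathcal{C}}))^{(n-2)/4}|P|^{-3}$ rather than the generic $\Pi$ appearing in the $E_1$ analysis, and (ii) it guarantees, at every cube-full prime power, two indices with $(\varpi^k,c_i)=(\varpi^k,c_j)=H_\varpi$, permitting Lemma~\ref{Le: ExpSumEstimate} with $m=2$, which yields $|r_2|^{2/3+2n/3+\varepsilon}|H|^{1/2}$ rather than the weaker $m=0$ bound. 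The factor $|H|^{1/2}$ accounting for a common divisor of $\bm{c}$ is then absorbed by the sparsity hypothesis. The actual argument proceeds with the decomposition $r=r_1r_2r_3$ (cube-free, cube-full, and divisors of $\prod F_i$) and pointwise bounds for each factor rather than the average bounds over $r$ used in $E_1$; this is a genuinely different route, not a cosmetic adjustment, and it is forced by the degeneracy of the set $S$.
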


\begin{proof}
If $D=\deg F^*$, then we see from~\eqref{Def: DualForm} and Lemma~\ref{Le: DualChar2} that $F^*$ has non-zero monomials of the form $G_ix_i^D$ for every $i=1,\dots, n$. In particular, if $|\bm{c}|=\widehat{\mathcal{C}}$ and $F^*(\bm{c})=0$, then there must be at least two indices $i\neq j$ such that $\widehat{\mathcal{C}}\ll |c_i|\ll|c_j|\ll \widehat{\mathcal{C}}$. Therefore, from Lemma~\ref{lem.integral_estimate_for_I_r} we deduce
\begin{equation}\label{Eq: IntEstimateE_2}
    I_{\widehat{Y}}(\bm{c})\ll \frac{\widehat{\mathcal{C}}}{|P|^2\widehat{Y}}\prod_{i=1}^n\min\left\{\left(\frac{\widehat{Y}}{|P||c_i|}\right)^{1/2},\left(\frac{\widehat{Y}}{|P|\widehat{\mathcal{C}}}\right)^{1/4}\right\}\ll \left(\frac{\widehat{Y}}{|P|\widehat{\mathcal{C}}}\right)^{(n-2)/4}|P|^{-3}.
\end{equation}
Next we deal with the sum $S_r(\bm{c})$. Write $r=r_1r_2r_3$ into coprime monic factors $r_i$, where $r_1$ is cube-free, $r_2$ is cube-full and each prime divisor of $r_3$ divides $\prod F_i$. \\

Let us begin with $S_{r_2}(\bm{c})$. Suppose $\varpi^k\parallel r_2$ and write $H_\varpi=(\varpi^k, \bm{c})$. It follows that $\bm{c}=H_\varpi \bm{c}'$ for some $\bm{c}'\in\O^n$ with $(\varpi,\bm{c}')=1$. It is again easy to see that any prime divisor of the coefficients $G_i$ of the top-degree monomials $x_i^D$ of $F^*$ divides $\prod F_i$. In particular, if $H_\varpi\neq \varpi^k$, then $F^*(\bm{c}')=0$ implies that at least two entries of $\bm{c}'$ are coprime to $\varpi$. On the other hand, if $H_\varpi=\varpi^k$, then $(\varpi^k,c_i)=\varpi^k$ for every $i=1,\dots, n$, so that in any case there are always least two distinct indices $i\neq j$ such that $(\varpi^k,c_i)=(\varpi^k,c_j)=H_\varpi$. Consequently it follows from Lemma~\ref{Le: ExpSumEstimate} with $m=2$ that 
\[
S_{r_2}(\bm{c})\ll |r_2|^{2/3+2n/3+\varepsilon}|H|^{1/2},
\]
where $H=\prod_{\varpi\mid r_2} H_\varpi$ divides each entry of $\bm{c}$.

In addition,~\eqref{Eq: DeligneS_r(c)} and Lemma~\ref{Le: ExpSumEstimate} give us $S_{r_1}(\bm{c})\ll |r_1|^{1+n/2+\varepsilon}$ and~\eqref{Eq: HuaEstimate} tells us that $S_{r_3}(\bm{c})\ll |r_3|^{1+2n/3+\varepsilon}$. To sum up, we have 
\begin{equation*}
S_r(\bm{c})\ll |r|^\varepsilon |r_1|^{1+n/2}|r_2|^{2/3+2n/3}|r_3|^{1+2n/3}|H|^{1/2}.
\end{equation*}
Let us fix $|r_i|=\widehat{Y}_i$, where $0\leq Y_i\leq Y$ and $Y_1+Y_2+Y_3=Y$. We want to give an upper bound for
\[
\mathcal{S}\coloneqq \sum_{|r_i|=\widehat{Y}_i, i=1,2,3}\sum_{\substack{|\bm{c}|=\widehat{C}\\ F^*(\bm{c})=0}}|S_r(\bm{c})|.
\]
Taking into account that the number of available $r_1$ and $r_3$ is $O(\widehat{Y}_1)$ and $O(|P|^\varepsilon)$ respectively, we see that 
\begin{align*}
    \mathcal{S} &\ll |P|^\varepsilon \widehat{Y}_1^{2+n/2}\widehat{Y}_2^{2/3+2n/3}\widehat{Y}_3^{1+2n/3}\sum_{|r_2|=\widehat{Y}_2}\sum_{H\mid r_2}|H|^{1/2}\sum_{\substack{|\bm{c}|=\widehat{C}/|H| \\ F^*(\bm{c})=0}}1\\
    &\ll |P|^\varepsilon \widehat{C}^{n-3+\eta} \widehat{Y}_1^{2+n/2}\widehat{Y}_2^{2/3+2n/3}\widehat{Y}_3^{1+2n/3} \sum_{|r_2|=\widehat{Y}_2}\sum_{H\mid r_2} |H|^{7/2-n-\eta},
\end{align*}
where we used the main assumption of the lemma in order to bound the number of ordinary solutions of $F^*(\bm{c})=0$ with $|\bm{c}|=\widehat{C}/|H|$ for the second inequality. Since $n \geq 4$ clearly $7/2-n - \eta \leq 0$ holds and since the number of available $r_2$ is $O(\widehat{Y}_2^{1/3})$, it follows that
\begin{equation}\label{Eq:EstimateS}
\mathcal{S}\ll |P|^\varepsilon\widehat{C}^{n-3+\eta}\widehat{Y}_1^{2+n/2}\widehat{Y}_2^{1+2n/3}\widehat{Y}_3^{1+2n/3}\ll |P|^{\varepsilon}\widehat{C}^{n-3+\eta}\widehat{Y}^{2+n/2},
\end{equation}
because $2+n/2\geq 1+2n/3$ for $n\leq 6$. As there are only $O(|P|^{\varepsilon})$ possibilities for permissible triples $(Y_1,Y_2,Y_3)$, we deduce from~\eqref{Eq: IntEstimateE_2} and~\eqref{Eq:EstimateS} that
\begin{align*}
    E_2(Y,\widehat{C})  &\ll |P|^{3n/4-5/2+\varepsilon}\widehat{Y}^{3/2-n/4}\widehat{\mathcal{C}}^{3n/4-5/2+\eta}.
\end{align*}
In particular, since $\widehat{\mathcal{C}}\ll |P|^{1/2}$ and $\widehat{Y}\ll |P|^{3/2}$, we thus obtain 
\begin{align*}
E_2(Y,\mathcal{C})&\ll |P|^{3n/4-5/2+\varepsilon}|P|^{9/4-3n/8}|P|^{3n/8-5/4+\eta/2}\\
&\ll |P|^{3n/4-3/2+\eta/2+ \varepsilon},
\end{align*}
which completes the proof.
\end{proof}
At this point our treatment of $E_2(P)$ differs depending on the characteristic of $K$.

If $\mathrm{char}(K) >3$, then by virtue of Lemma~\ref{lem.number_of_solutions_dual_form} we know that the number of ordinary solutions of the dual form $F^*(\bm{c}) = 0$ such that $\lvert \bm{c} \rvert \leq \widehat{\mathcal{C}}$ is bounded by $O(\widehat{\mathcal{C}}^{n-3+\varepsilon})$. Therefore Lemma~\ref{lem.few dual form solutions gives good E_2} implies
\[
E_2^{\mathrm{ord}}(P) \ll \lvert P \rvert^{3n/4-3/2+\varepsilon} \quad \text{and} \quad E_2(P) \ll \lvert P \rvert^{3n/4-3/2+\varepsilon},
\]
for $n=4$ and $n=6$, respectively. This finishes our treatment of $E_2(P)$ in this case.

If $\mathrm{char}(K) = 2$, then we need to argue differently. We begin by considering the case when $n=6$. According to Lemma~\ref{Le: DualChar2} the dual form takes the shape of a non-singular diagonal cubic form. In particular, we can trivially bound the number of solutions to $F^*(\bm{c}) = 0$ such that $\lvert \bm{c} \rvert \leq \widehat{\mathcal{C}}$ by $O(\widehat{\mathcal{C}}^6) = O(\widehat{\mathcal{C}}^{n-3+\eta})$, where $\eta = 3$. Therefore, Lemma~\ref{lem.few dual form solutions gives good E_2} gives
\[
E_2(P) \ll \lvert P \rvert^{3n/4-3/2+\eta/2+\varepsilon} = \lvert P \rvert^{n-3+\eta/2+\varepsilon}.
\]
This, together with our bounds for $N_0(P)$ and $E_1(P)$ established earlier in this section, shows that
\[
N(P) \ll \lvert P \rvert^{n-3+\eta/2 + \varepsilon}.
\]
This holds for any non-singular, diagonal cubic form over $K$ when $\mathrm{char}(K) = 2$. In particular, as a result we can bound the number of solutions to $F^*(\bm{c}) = 0$ with  $\lvert \bm{c} \rvert \leq \widehat{\mathcal{C}}$ by $O(\widehat{\mathcal{C}}^{n-3+\eta/2+\varepsilon})$. Another application of Lemma~\ref{lem.few dual form solutions gives good E_2} yields
\[
E_2(P) \ll \lvert P \rvert^{3n/4-3/2+\eta/4+\varepsilon}
\]
and we may argue as above to deduce
\[
N(P) \ll \lvert P \rvert^{n-3+\eta/4 + \varepsilon}.
\]
If we repeat this process $k$-times, where $2^{-k+1} \leq \varepsilon$ we find
\[
E_2(P) \ll \lvert P \rvert^{3n/4-3/2+2\varepsilon},
\]
which concludes our treatment for $E_2(P)$ in this case.

On the other hand, if $n=4$  we can trivially estimate the number of solutions to $F^*(\bm{c})=0$ of bounded height $\widehat{\mathcal{C}}$ by $O(\widehat{\mathcal{C}}^4) = O(\widehat{\mathcal{C}}^{n-3+\eta})$, where $\eta = 3$. Lemma~\ref{lem.few dual form solutions gives good E_2} then yields
\[
E_2(P) \ll \lvert P \rvert^{3n/4-3/2+\eta/2+\varepsilon} = \lvert P \rvert^{n-3+1/2+\eta/2+\varepsilon},
\]
which in turn implies
\[
N(P) \ll \lvert P \rvert^{n-3+1/2+\eta/2+\varepsilon}. 
\]
Repeating this process $k$-times, where $k > 1/\varepsilon$ we thus find
\[
E_2(P) \ll \lvert P \rvert^{3n/4-3/2+1/2+ 2\varepsilon} = \lvert P \rvert^{2+2\varepsilon}.
\]

\section{Waring's problem and weak approximation}\label{Se:WaringWA}
Having completed our task for $n=6$, we will now apply it to Waring's problem and weak approximation for diagonal cubic hypersurfaces of dimension at least $5$. 
\subsection{Waring's problem for $n\geq 7$}
Recall that $\mathbb{J}_{q}^3[t]$ is the additive closure of all cubes in $\O$. Given $P\in \mathbb{J}_{q}^3[t]$, we define $B\coloneqq \left\lceil \frac{\deg(P)}{3}\right\rceil +1$ and the counting function
\[
R_n(P)\coloneqq \#\{\bm{x}\in\O^n\colon |\bm{x}|<\widehat{B}, x_1^3+\cdots +x_n^3=P\}.
\]
Our next goal is to deduce Theorem~\ref{Th:Waring} from our findings. We shall accomplish this goal with a classical version of the circle method. For $\alpha\in\TT$, we define
\[
T(\alpha)\coloneqq \sum_{\substack{x\in\O\\ |x|<\widehat{B}}}\psi(\alpha x^3).
\]
It then follows from~\eqref{Eq: OrthogonalityOfCharacters} that we can write our counting function as
\[
R_n(P)=\int_\TT T(\alpha)^n\psi(-\alpha P)\dd\alpha.
\]
We then define our set of major arcs to be
\[
\mathfrak{M}\coloneqq \bigcup_{\substack{|r|\leq \widehat{B}\\ r\text{ monic}}}\bigcup_{\substack{|a|<|r|\\ (a,r)=1}}\{\alpha \in \TT\colon |r\alpha-a|<\widehat{B}^{-2}\}
\]
and $\mathfrak{m}\coloneqq \TT\setminus\mathfrak{M}$ constitutes our set of minor arcs. The following lemma is a consequence of~\cite[Theorem 30]{Kubota1974}.
\begin{lemma}\label{Thm: MajorArcsWaring}
Suppose $\cha(K)\nmid 3$ and $n\geq 7$. Then there exists $\delta>0$ such that for all $P\in\mathbb{J}_{q}^3[t]$ we have
\[
\int_{\mathfrak{M}}T(\alpha)^n\psi(-\alpha P)\dd\alpha=\mathfrak{S}(P)\sigma_\infty(P)\widehat{B}^{n-3}+O\left(\widehat{B}^{n-3-\delta}\right),
\]
where $\mathfrak{S}(P)$ and $\sigma_\infty(P)$ are the singular series and singular integral associated to $P$. Furthermore, they satisfy
\[
1\ll \mathfrak{S}(P)\sigma_\infty(P)\ll 1.
\]
\end{lemma}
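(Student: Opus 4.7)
The plan is to execute a standard major arc analysis over $\F_q(t)$. On an arc around $a/r$ write $\alpha=a/r+\theta$ and, via a $q$-adic block decomposition of the variable of summation, approximate
\[
T(a/r+\theta)=|r|^{-1}S(a,r)v(\theta)+O(|r|),
\]
where $S(a,r)=\sum_{|z|<|r|}\psi(az^3/r)$ is the complete cubic exponential sum and $v(\theta)=\int_{|u|<\widehat{B}}\psi(\theta u^3)\dd u$ is the associated oscillatory integral. Raising this approximation to the $n$-th power, integrating against $\psi(-\alpha P)$ over $\mathfrak{M}$, and extending the sum over $r$ and the integral over $\theta$ to be unconstrained formally produces the main term $\mathfrak{S}(P)\sigma_\infty(P)\widehat{B}^{n-3}$, where
\[
\mathfrak{S}(P)=\sum_{r\text{ monic}}|r|^{-n}\sideset{}{'}\sum_{|a|<|r|}S(a,r)^n\psi(-aP/r),\qquad \sigma_\infty(P)=\widehat{B}^{3-n}\int_{K_\infty}v(\theta)^n\psi(-\theta P)\dd\theta.
\]

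The error terms thus produced will be controlled via the Weyl bound $|S(a,r)|\ll|r|^{2/3+\varepsilon}$ valid for coprime $(a,r)$. When $\cha(K)>3$ this is immediate from Hua's estimate~\eqref{Eq: HuaEstimate}; in characteristic $2$ the analogous bound is due to Car~\cite{car1992sommes}. A direct computation then shows that for $n\geq 7$ both the approximation error and the error from completing the sum over $r$ and the integral over $\theta$ are of size $O(\widehat{B}^{n-3-\delta})$ for some absolute $\delta>0$.

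For the two-sided bound $1\ll \mathfrak{S}(P)\sigma_\infty(P)\ll 1$, I would factor the singular series as an Euler product $\mathfrak{S}(P)=\prod_\varpi \sigma_\varpi(P)$ of local densities for $x_1^3+\cdots+x_n^3=P$ over the completions $K_\varpi$. The Weyl bound again yields absolute convergence of the product together with a uniform upper bound when $n\geq 7$, while the lower bound $\sigma_\varpi(P)\gg 1$ at every place is obtained by Hensel lifting of non-singular local solutions, which are abundant in $n\geq 7$ variables even at the finitely many bad places (in particular at primes dividing $3$ when $\cha(K)\neq 3$). The analogous estimate for $\sigma_\infty(P)$ follows after the substitution $\bm{x}=\widehat{B}\bm{y}$, which reduces the singular integral to one depending only on the leading coefficient of $P$ and whose two-sided bound is standard once $P\in \mathbb{J}_q^3[t]$ is used.

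The most delicate input is the Weyl bound in characteristic $2$, where classical Weyl differencing fails due to the factor $3!$; Car's substitute estimate in~\cite{car1992sommes} is precisely what resolves this. With that input in hand, the rest of the argument is the standard circle method computation carried out in~\cite[Theorem 30]{Kubota1974}.
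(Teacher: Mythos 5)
Your outline is the standard major arc analysis and is essentially correct; this is exactly the computation that the paper delegates to Kubota's thesis~\cite[Theorem 30]{Kubota1974}, with Liu--Wooley's observation~\cite[Lemma 5.2]{LiuWooley2010} lowering the variable threshold to $n \geq 7$. The error budget is right: with $|S(a,r)|\ll|r|^{2/3+\varepsilon}$, the tail of the singular series contributes $\ll \sum_{\lvert r\rvert > \widehat{B}} \lvert r\rvert^{2-n/3+\varepsilon}$, which converges and gives a power saving precisely when $n \geq 7$.

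One correction of attribution, though, which does not affect the validity of the argument. You assert that the complete cubic exponential sum bound $|S(a,r)| \ll |r|^{2/3+\varepsilon}$ requires Car's estimate in characteristic $2$. This is not the case. Hua's estimate~\eqref{Eq: HuaEstimate} in the paper is stated and used without any characteristic restriction (other than $\cha(K) \neq 3$, needed so that $x \mapsto x^3$ is not an additive map), and its proof is a recursive reduction on prime powers combined with the Weil bound for curves --- it makes no use of Weyl differencing. Car's estimate~\cite[Proposition IV.4]{car1992sommes} is a substitute for the \emph{Weyl inequality} bounding the incomplete generating function $T(\alpha)$ on the minor arcs, where Weyl differencing genuinely fails because of the factor $3!$. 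In the major arc lemma you are proving, only the complete local sums $S(a,r)$ and the archimedean oscillatory integral $v(\theta)$ appear after the block approximation, so the analysis goes through uniformly for all $\cha(K) \neq 3$ without invoking Car at all; Car is needed only later, at~\eqref{Eq:EstimateMinorWaringPtwise}, to handle the minor arcs when $q$ is even.
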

\begin{remark}
In fact, Kubota states Lemma~\ref{Thm: MajorArcsWaring} only for $n\geq 10$. However, as explained by Liu--Wooley in~\cite[Lemma 5.2]{LiuWooley2010}, this is a result of an oversight and Kubota's argument already works for $n\geq 7$.
\end{remark}

We now have
\begin{equation}\label{Eq: WaringMinor}
    \left\lvert \int_{\mathfrak{m}}T(\alpha)^n\psi(-\alpha P)\dd\alpha \right\rvert \leq \sup_{\alpha\in\mathfrak{m}}|T(\alpha)|^{n-6}\int_{\TT}|T(\alpha)|^6\dd\alpha .
\end{equation}
If $\alpha\in \mathfrak{m}$, then~\eqref{Eq: DirichletDissection} with $\widehat{Q}=\widehat{B}$ implies the existence of $a,r \in \O$ with $r$ monic such that $|a|<|r|\leq \widehat{B}$, $(a,r)=1$ and $|r\alpha -a|<\widehat{B}^{-1}$. As $\alpha \in \mathfrak{m}$, we must have $|\alpha-a/r|\geq \widehat{B}^{-2}|r|^{-1}$. Under these circumstances Weyl's inequality, see~\cite[Lemma 5.10]{Browning2021} for $\cha(K)>3$ and~\cite[Proposition IV.4]{car1992sommes} for $\cha(K)=2$, guarantees the existence of $\delta >0$ such that
\begin{equation}\label{Eq:EstimateMinorWaringPtwise}
    \sup_{\alpha\in\mathfrak{m}}|T(\alpha)|^{n-6}\ll \widehat{B}^{(n-6)(1-\delta)}.
\end{equation}
Since
\[
\int_{\TT}|T(\alpha)|^6\dd\alpha = \#\{\bm{x}\in \O^6\colon |\bm{x}|<\widehat{B}, x_1^3+x_2^3+x_3^3=x_4^3+x_5^3+x_6^3\},
\]
Theorem~\ref{Th:TheTheorem} implies 
\begin{equation}\label{Eq:EstimateMinorWaringMeanValue}
    \int_{\TT}|T(\alpha)|^6\dd\alpha \ll \widehat{B}^{3+\varepsilon}.
\end{equation}
Plugging~\eqref{Eq:EstimateMinorWaringPtwise} and~\eqref{Eq:EstimateMinorWaringMeanValue} into~\eqref{Eq: WaringMinor} yields
\begin{align*}
    \int_{\mathfrak{m}}T(\alpha)^n \psi(-\alpha P) \dd\alpha &\ll \widehat{B}^{(n-6)(1-\delta)+3+\varepsilon}\\
    &= \widehat{B}^{n - 3 -\delta(n-6)+\varepsilon}.
\end{align*}
After choosing $\varepsilon= \delta(n-6)/2$, we see that the contribution of the minor arcs is 
\[
\int_{\mathfrak{m}}T(\alpha)^n\psi(-\alpha P)\dd\alpha\ll \widehat{B}^{n-3 -\delta(n-6)/2}.
\]
Since $n\geq 7$, combining this with Lemma~\ref{Thm: MajorArcsWaring} therefore completes the proof of Theorem~\ref{Th:Waring}.
\subsection{Weak approximation for cubic diagonal hypersurfaces}
We will show that weak approximation holds for the diagonal cubic hypersurface defined by $F(\bm{x}) = \sum_{i=1}^n F_i x_i^3$ if $n \geq 7$. 
Fix  $\bm{x}_0 \in \TT^n$, $M \in \mathcal{O}$, $\bm{b} \in \mathcal{O}^n$ and $N \in \mathbb{Z}_{\geq 0}$ such that $\lvert \bm{b} \rvert < \lvert M \rvert$ and such that $N$ is bounded in terms of $M$. Define the weight function $\widetilde{w} \colon K_\infty^n \rightarrow \mathbb{R}$ via
\[
\widetilde{w}(\bm{x}) = \begin{cases}
1 \quad &\text{if $\lvert \bm{x}-\bm{x}_0 \rvert < \widehat{N}^{-1}$}, \\
0 &\text{otherwise.}
\end{cases}
\]
Further for $P \in \mathcal{O}$ we introduce the counting function
\[
N(P, \widetilde{w}) \coloneqq \sum_{\substack{\bm{x} \in \mathcal{O}^n \\ F(M\bm{x} + \bm{b}) = 0}} \widetilde{w}\left(\frac{M\bm{x} + \bm{b}}{P}\right).
\]
As usual, we can write this as an integral over an exponential sum
\[
N(P, \widetilde{w})  = \int_{ \TT} \widetilde{S}(\alpha) \dd \alpha,
\]
where
\[
\widetilde{S}(\alpha) = \sum_{\bm{x} \in \mathcal{O}^n } \psi \left( \alpha F(M\bm{x}+\bm{b}) \right) \widetilde{w}\left(\frac{M\bm{x} + \bm{b}}{P}\right).
\]
Since $F$ is diagonal we may factorise $\widetilde{S}(\alpha)$ as
\[
\widetilde{S}(\alpha) = \prod_{i=1}^n \widetilde{T}_i(\alpha),
\]
where
\[
\widetilde{T}_i(\alpha) = \sum_{\substack{x \in \mathcal{O} \\ \lvert Mx + b_i - x_{0,i}\rvert < \lvert P \rvert \widehat{N}^{-1}}} \psi (\alpha F_i (Mx+b_i)^3).
\]
Note that our counting function $N(P,\widetilde{w})$ agrees with the function $\rho_{M,\bm{b}}(n)$ and $\widetilde{S}(\alpha)$ agrees with $T(\alpha)$ in~\cite[Chapter 4]{lee2011birch}. In order to show weak approximation for the variety $X = \mathbb{V}(F) \subset \mathbb{P}^{n-1}$, by the same argument as the one provided in Section 4.9 of~\cite{lee2011birch}, it is enough to show the following result.
\begin{theorem}
Suppose $\cha(K)>3$. Then there exists some $\delta > 0$ such that
\[
N(P,\widetilde{w}) = \lvert M \rvert^{-3} \mathfrak{S} \mathfrak{I} \lvert P \rvert^{n-3} + O(\lvert P \rvert^{n-3-\delta}),
\]
where $\mathfrak{S}$ and $\mathfrak{I}$ are the singular series and the singular integral respectively as defined in~\eqref{eq.sing_ser_WA} and~\eqref{eq.sing_int_WA}.
\end{theorem}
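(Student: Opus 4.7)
The plan is a standard major/minor arc dissection à la Hardy–Littlewood, with Theorem~\ref{Th:TheTheorem} serving as the minor-arc mean value estimate and Weyl's inequality providing the pointwise bound. Fix a parameter $R$ with $1\le \widehat R\le |P|^{3/2}$, to be chosen as a small positive power of $|P|$, and set
\[
\mathfrak M(R)=\bigcup_{\substack{r\text{ monic}\\|r|\le\widehat R}}\bigcup_{\substack{|a|<|r|\\(a,r)=1}}\bigl\{\alpha\in\TT:|r\alpha-a|<\widehat R\,|P|^{-3}\bigr\},\qquad \mathfrak m(R)=\TT\setminus\mathfrak M(R).
\]

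On the major arcs we proceed as in~\cite[Chapter 4]{lee2011birch}. Writing $\alpha=a/r+\theta$, applying Poisson summation (in the guise of~\eqref{eq.poisson summation}) to each factor $\widetilde T_i(\alpha)$ produces the familiar factorisation into a complete exponential sum to modulus dividing $rM$ and a smooth oscillatory integral in $\theta$. Summing over admissible $a,r$, integrating over $|\theta|<\widehat R|P|^{-3}$, and completing the singular series $\mathfrak S$ and singular integral $\mathfrak I$ to their full range, one obtains
\[
\int_{\mathfrak M(R)}\widetilde S(\alpha)\,\dd\alpha=|M|^{-3}\mathfrak S\,\mathfrak I\,|P|^{n-3}+O\bigl(|P|^{n-3}\widehat R^{-\delta_1}\bigr)
\]
for some $\delta_1>0$; here $n\ge 7$ is used to ensure absolute convergence of $\mathfrak S$ and $\mathfrak I$ uniformly in $R$, and the $|M|^{-3}$ records that we are counting lattice points in a single coset modulo $M$ subject to one cubic constraint.

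For the minor arcs we pull out $n-6$ factors pointwise and handle the remaining six in mean square. Applying Weyl's inequality~\cite[Lemma 5.10]{Browning2021} to the cubic polynomial $x\mapsto \alpha F_i(Mx+b_i)^3$, whose leading coefficient is $\alpha F_iM^3$, gives
\[
\sup_{\alpha\in\mathfrak m(R)}|\widetilde T_i(\alpha)|\ll |P|^{1-\delta_2}
\]
for some $\delta_2>0$ depending on the chosen power of $|P|$ in $\widehat R$ (and on $M$, which is fixed). On the other hand, Cauchy–Schwarz together with the substitution $\bm y=M\bm x+\bm b$ reduces the mean value $\int_\TT|\widetilde T_1\cdots \widetilde T_6|(\alpha)\,\dd\alpha$ to an instance of Theorem~\ref{Th:TheTheorem}, producing the bound $O(|P|^{3+\varepsilon})$. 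Combining these via
\[
\Bigl|\int_{\mathfrak m(R)}\widetilde S(\alpha)\,\dd\alpha\Bigr|\le \Bigl(\sup_{\alpha\in\mathfrak m(R)}\max_i|\widetilde T_i(\alpha)|\Bigr)^{n-6}\int_\TT|\widetilde T_1\cdots \widetilde T_6|(\alpha)\,\dd\alpha
\]
yields a minor arc contribution of $O(|P|^{n-3-\delta_2(n-6)+\varepsilon})$, which is admissible for $n\ge 7$ upon choosing $\varepsilon<\delta_2(n-6)$.

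The main obstacle will be the major arc analysis: showing that the truncated singular series and integral approximate their completions with a power-saving error, and tracking the shift $\bm b$ and the scaling by $M$ through Poisson summation cleanly. These points are standard but technical; analogous manipulations appear in~\cite{Kubota1974} and~\cite[Chapter 4]{lee2011birch}, and are compatible with our setup because the shift contributes only an extra Ramanujan-type sum in each local factor of $\mathfrak S$, while the scaling by $M$ contributes an overall $|M|^{-3}$ after accounting for the single cubic constraint. Once this is in place, choosing $\widehat R$ to be a sufficiently small positive power of $|P|$ produces the asserted $\delta>0$.
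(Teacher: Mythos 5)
Your proposal is correct and follows essentially the same route as the paper: a Hardy--Littlewood dissection in which Theorem~\ref{Th:TheTheorem} supplies the sixfold minor-arc mean value, Weyl's inequality supplies the pointwise bound for the remaining $n-6$ factors, and the major-arc evaluation (with the $\lvert M\rvert^{-3}$ normalisation and the completion of $\mathfrak{S},\mathfrak{I}$) is delegated to Lee's thesis exactly as the paper does. The only cosmetic difference is that you split the sixfold product via a single Cauchy--Schwarz into $\int\lvert T_iT_jT_k\rvert^2$, whereas the paper applies H\"older to reach the diagonal moments $\int\lvert T_i\rvert^6$; both are valid invocations of Theorem~\ref{Th:TheTheorem} for a non-singular six-variable diagonal cubic.
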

We tackle this using a traditional circle method argument.

 We define the major arcs to be the set $\mathcal{M} \subset \TT$ given by
\[
\mathcal{M} = \bigcup_{\substack{r \in \mathcal{O}\\ \lvert r \rvert < \lvert P \rvert^{1/2} \\ r \text{ monic}  }} \bigcup_{\substack{a \in \mathcal{O} \\ \lvert a \rvert < \lvert r \rvert \\ (a,q) = 1}} \left\{ \alpha \in \TT \colon \lvert r\alpha - a \rvert < H_F^{-1} \lvert M \rvert^{-3} \lvert r \rvert \lvert P \rvert^{-5/2} \right\},
\]
and we take the minor arcs to be the complement $\mathfrak{m} = \TT \setminus \mathcal{M}$. 

In this context, provided $\cha(K)>3$, Weyl's inequality~\cite[Lemma 4.3.6]{lee2011birch} tells us that
\[
|\widetilde{T}_i(\alpha)|\ll |P|^{1+\varepsilon}\left(\frac{|P|+|r|+|P|^3|r\alpha-a|}{|P|^3}+\frac{1}{|r|+|P|^3|r\alpha-a|} \right)^{1/4}
\]
for $i=1,\dots, n$ if $a,r\in\O$ are such that $|a|<|r|$, $r$ monic and $(a,r)=1$. Using~\eqref{Eq: DirichletDissection} and the definition of the minor arcs, a similar argument that handed us~\eqref{Eq:EstimateMinorWaringPtwise} gives
\begin{equation} \label{eq.weyl_WA}
\sup_{\alpha \in \mathfrak{m}} \left\lvert \widetilde{T}_i(\alpha)\right\rvert \ll \lvert P \rvert^{7/8+\varepsilon},
\end{equation}
for any $\varepsilon >0$.
We are now ready to finish our treatment of the minor arcs. If $n\geq 7$ we obtain
\[
\int_{\mathfrak{m}} \lvert \widetilde{S}(\alpha) \rvert \dd \alpha = \int_{\mathfrak{m}} \left\lvert \prod_{i=1}^n \widetilde{T}_i(\alpha) \right\rvert \dd\alpha \ll \sup_{\alpha \in \mathfrak{m}} \left\lvert \widetilde{T}_7(\alpha) \cdots \widetilde{T}_n(\alpha) \right\rvert \int_{\TT} \left\lvert \prod_{i=1}^6 \widetilde{T}_i(\alpha) \right\rvert \dd\alpha.
\] 
The integral can be dealt with as follows. By Hölder's inequality we find
\[
\int_{\mathfrak{m}} \left\lvert \prod_{i=1}^6 \widetilde{T}_i(\alpha) \right\rvert \dd\alpha \leq \prod_{i=1}^6\left(\int_{\TT}|\widetilde{T}_i(\alpha)|^6\dd\alpha\right)^{1/6}.
\]
Now the last quantity is equal to
\[
 \prod_{i=1}^6\#\left\{\bm{x}\in \O^6\colon x_j \equiv b_i \, \mathrm{mod} \, M, |x_j/P-x_{0,i}|<\widehat{N}^{-1}, \text{ for all $j$, } \sum_{j=1}^3 x_j^3 =\sum_{j=4}^6 x_j^3\right\}^{1/6},
\]
which in turn is bounded by 
\[
\prod_{i=1}^6 \#\{\bm{x}\in \O^6 \colon |\bm{x}|< |\bm{x}_0||P|, \, x_1^3+x_2^3+x_3^3=x_4^3+x_5^3+x_6^3\}^{1/6},
\]
if $\lvert P \rvert$ is sufficiently large. An application of Theorem~\ref{Th:TheTheorem} therefore yields
\[
 \int_{\TT} \left\lvert \prod_{i=1}^6 \widetilde{T}_i(\alpha) \right\rvert \dd\alpha \ll \lvert P \rvert^{3+\varepsilon}. 
\]
Once combined with~\eqref{eq.weyl_WA} we thus obtain
\[
\int_{\mathfrak{m}} \lvert \widetilde{S}(\alpha) \rvert \dd \alpha \ll \lvert P \rvert^{n-3-(n-7)/8+\varepsilon}
\]
for any $\varepsilon >0$, which is satisfactory if $n\geq 7$.
We now turn to the major arcs. Given $a,r \in \mathcal{O}$ write
\[
\widetilde{S}_r(a) \coloneqq \sum_{\lvert \bm{x} \rvert < \lvert r \rvert} \psi \left( \frac{a F(M \bm{x} + \bm{b})}{r} \right).
\]
For any $Y \in \mathbb{R}$ we define the truncated singular series
\[
\mathfrak{S}(\widehat{Y}) \coloneqq \sum_{\substack{\lvert r \rvert < \widehat{Y} \\ r \text{ monic}}} \sum_{\substack{\lvert a \rvert < \lvert r \rvert \\ (a,r) = 1}} \lvert r \rvert^{-n} \widetilde{S}_r(a),
\]
and the truncated singular integral to be
\[\
\mathfrak{I}(\widehat{Y}) = \int_{\lvert \gamma \rvert < H_F^{-1} \widehat{Y}} I(\gamma) \dd \gamma,
\]
where
\[
I(\gamma) = \int_{\TT^n} \psi(\gamma F(\bm{x})) \widetilde{w}(\bm{x}) \dd\bm{x}.
\]
Then from (4.6.30) in~\cite{lee2011birch} it follows that we have
\[
\int_{\mathcal{M}} \widetilde{S}(\alpha) \dd \alpha = \lvert M \rvert^{-3} \mathfrak{S}(\lvert P \rvert^{1/2}) \mathfrak{I}(\lvert P \rvert^{1/2}) \lvert P \rvert^{n-3}.
\]
It remains to study the convergence of the singular integral and singular series. 
In order to handle the singular series we will need upper bounds for $\widetilde{S}_r(a)$. First, we record the following multiplicative property, which is shown in~\cite[Lemma 4.7.2]{lee2011birch}. If $r_1,r_2 \in \mathcal{O}$ are coprime then
\[
\widetilde{S}_{r_1 r_2}(a) = \widetilde{S}_{r_1}(a_1) \widetilde{S}_{r_2}(a_2),
\]
 where $a_i \in \mathcal{O}$ are such that $a_1 \equiv a \tilde{r}_2 \mod r_1$ and $a_2 \equiv a \tilde{r}_1 \mod r_2$, where $\tilde{r}_1, \tilde{r}_2$ denote the multiplicative inverses modulo $r_2,r_1$, respectively. Thus, from~\eqref{Eq: HuaEstimate} in combination with the divisor estimate, it follows that we have
\begin{equation} \label{eq.WA_exp_sum_bound}
    \widetilde{S}_r(a) \ll \lvert r \rvert^{2n/3+\varepsilon},
\end{equation}
where the constant may depend on $M,b$ and $\varepsilon$.

Using this we see that
\[
 \sum_{ \substack{\lvert r \rvert = \widehat{Y} \\ r \text{ monic}}}\sum_{\substack{\lvert a \rvert < \lvert r \rvert \\ (a,r) = 1}} \lvert r \rvert^{-n} \left|\widetilde{S}_r(a)\right| \ll \widehat{Y}^{(2-n/3+\varepsilon )}.
\]
Since $n \geq 7$ we deduce absolute convergence of the series
\begin{equation} \label{eq.sing_ser_WA}
    \mathfrak{S} = \sum_{\substack{ r \text{ monic}}} \sum_{\substack{\lvert a \rvert < \lvert r \rvert \\ (a,r) = 1}} \lvert r \rvert^{-n} \widetilde{S}_r(a),
\end{equation}
which is the singular series. Moreover choosing positive $\varepsilon < (n-6)/6$ we find
\begin{equation} \label{eq.sing_ser_estimate}
    \mathfrak{S} - \mathfrak{S}(\lvert P \rvert^{1/2}) \ll \lvert P \rvert^{1-n/6+\varepsilon},
\end{equation}
if $n \geq 7$ upon redefining $\varepsilon$.
We turn to the singular integral.
Let $\bm{x}_0 \in K_\infty$ be a non-singular point of $X \subset \mathbb{P}^{n-1}$. In~\cite{CubicHypersurfacesBV} it is shown in Lemma 7.5 and the paragraphs preceding it that 
\[
\mathfrak{I}(\widehat{Y}) = \mathfrak{I}(\widehat{N}/\lvert \nabla F(\bm{x}_0)\rvert) = \frac{1}{\lvert \nabla F(\bm{x}_0) \rvert \widehat{N}^{n-1}}
\]
whenever $\widehat{Y} \geq \widehat{N}/\lvert \nabla F(\bm{x}_0) \rvert$. Thus clearly
$\lim_{\widehat{Y} \rightarrow \infty} \mathfrak{I}(\widehat{Y})$ exists and is equal to
\begin{equation} \label{eq.sing_int_WA}
    \mathfrak{I} \coloneqq \lim_{\widehat{Y} \rightarrow \infty} \mathfrak{I}(\widehat{Y}) = \frac{1}{\lvert \nabla F(\bm{x}_0) \rvert \widehat{N}^{n-1}}.
\end{equation}
We conclude that
\[
N(P,\widetilde{w}) = \lvert M \rvert^{-3} \mathfrak{S} \mathfrak{I} \lvert P \rvert^{n-3} + O(\lvert P \rvert^{n-3-1/8+\varepsilon}),
\]
as desired.
\section{Special solutions and the case $n=4$}\label{Se:SpecialSolutions}
In this section we will concern ourselves with understanding how the special solutions of $F^*(\bm{c}) = 0$ in the case $n=4$ relate to the solutions of $F(\bm{x}) = 0$ on rational lines. The goal of this section is to prove the following lemma, from which Theorem~\ref{Th:TheoTheorem.n=4} immediately follows.
\begin{lemma} \label{lem.special_soln=lines}
	For any $\varepsilon > 0$ the following holds
	\begin{equation} \label{eq.special_solutions=lines}
		|P|^4\sum_{\substack{r \; \mathrm{monic}\\ |r|\leq \widehat{Q}}}|r|^{-4}\int_{|\theta|<|r|^{-1}\widehat{Q}^{-1}}\sum_{\bm{c}} {\vphantom{\sum}}^{\mathrm{spec}}S_r(\bm{c})I_r(\theta,\bm{c})\dd\theta = \sum_{\bm{x}}^{}{\vphantom{\sum}}^{\mathrm{line}} w(P^{-1}\bm{x}) + O(\lvert P \rvert^{3/2+\varepsilon}),
	\end{equation}
	where $\sum_{\bm{c}}^{\mathrm{spec}}$ denotes the sum over the \emph{special solutions} $\bm{c} \in \O^4\setminus\{\bm{0}\}$ of $F^*(\bm{c}) =0$ such that 
	\begin{equation} \label{eq.condition_c_special_special}
	(F_1^{-1}c_1^3)^{1/2} \pm (F_2^{-1}c_2^3)^{1/2} = (F_3^{-1}c_3^3)^{1/2} \pm (F_4^{-1}c_4^3)^{1/2} = 0
	\end{equation}
	and $\sum_{\bm{x}}^{\mathrm{line}}$ denotes the sum over points $\bm{x} \in \O^4$ satisfying
	\begin{equation} \label{eq.ponits_lines_condition}
	F_1 x_1^3 + F_2 x_2^3 = F_3x_3^3 + F_4x_4^3 = 0.
	\end{equation}
\end{lemma}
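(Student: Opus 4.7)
The plan is to invert the delta-method expansion on the left of \eqref{eq.special_solutions=lines} by applying Poisson summation to the natural parametrization of the special solutions, thereby recovering the direct count $\sum_{\bm{x}}^{\mathrm{line}}w(\bm{x}/P)$ on the right.

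First I would parametrize the special solutions. Condition \eqref{eq.condition_c_special_special} forces $F_2c_1^3=F_1c_2^3$ and $F_4c_3^3=F_3c_4^3$, so $(c_1,c_2)$ lies on the (finite) rational locus of the dual curve of $F_1X^3+F_2Y^3$, and analogously for $(c_3,c_4)$. Choosing primitive lifts $(p,q),(p',q')\in\O^2$ of these rational dual points, every special $\bm{c}$ in the corresponding family is of the form $\bm{c}(k_1,k_2)=k_1(p,q,0,0)+k_2(0,0,p',q')$ with $(k_1,k_2)\in\O^2$, $k_1,k_2\neq 0$. The cube-root duality $(p,q)\leftrightarrow(-q,p)$ identifies these rational dual points precisely with the slopes of the rational lines on $X$ from Theorem~\ref{Th:TheoTheorem.n=4}; in particular the finitely many families (indexed by sign patterns and index pairings in \eqref{eq.condition_c_special_special}) correspond bijectively to the rational lines, and both sides of \eqref{eq.special_solutions=lines} vanish trivially whenever no rational lines exist.

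Second, substitute $\bm{c}(\bm{k})$ into \eqref{Eq: Definition S_r(c)} and \eqref{Eq: Definition I_r(theta,c)}; the entire $(k_1,k_2)$-dependence condenses into a single character
\[
\psi\!\left(\frac{k_1L_1(\bm{y},\bm{x})+k_2L_2(\bm{y},\bm{x})}{r}\right),
\]
where $L_1=(py_1+qy_2)+P(px_1+qx_2)$ and $L_2$ is the analogue in indices $3,4$. Extend the sum over $(k_1,k_2)$ from $\O^2\setminus\{(0,0)\}$ to all of $\O^2$; the missing $k_j=0$ contributions are of $N_0(P)$-type and are $O(|P|^{3/2+\varepsilon})$ by the arguments of Section~\ref{Se:ActivationDelta}. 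Apply Poisson summation \eqref{eq.poisson summation} to the $(k_1,k_2)$-sum to obtain a dual sum over $(n_1,n_2)\in\O^2$, where after executing the Fourier integrals via \eqref{Eq: OrthogonalityOfCharacters} the distributional constraint $L_j(\bm{y},\bm{x})=n_jr$ ($j=1,2$) is enforced.

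Third, isolate the term $(n_1,n_2)=(0,0)$. The constraints $L_1=L_2=0$ read $p(y_1+Px_1)+q(y_2+Px_2)=0$ and analogously for the second pair, which under $(p,q)=(-v,u)$ say precisely that $\bm{z}:=\bm{y}+P\bm{x}$ lies on the corresponding rational line, so $F(\bm{z})=0$. Change variables from $\bm{y}$ to $\bm{z}$ and carry out the remaining $\theta$-integration and $a$-summation via \eqref{Eq: OrthogonalityOfCharacters}; the resulting expression is exactly an inverse delta-method identity and collapses to $\sum_{\bm{x}}^{\mathrm{line}}w(\bm{x}/P)$, with each rational line recovered exactly once after summing the contributions from the finitely many sign patterns and pairings. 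The dual frequencies $(n_1,n_2)\neq(0,0)$ contribute to the error: the shifted constraints produce genuine oscillation in $(\bm{y},a)$, and combining the exponential-sum averages of Section~\ref{Se:ExpSums} with the integral bounds of Section~\ref{Se:IntegralEstimates} yields an overall contribution of $O(|P|^{3/2+\varepsilon})$, in close analogy with the treatment of $E_1(P)$.

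The main obstacles will be technical: first, Poisson summation on the $(k_1,k_2)$-sum requires justification because $I_r(\theta,\bm{c}(\bm{k}))$ decays only polynomially in $\bm{k}$, which I would handle by truncating $|\bm{c}|\leq\widehat{C}$ via Lemma~\ref{lem.J_f_vanishing} before applying the formula so that the relevant sum is effectively finite; second, one must verify that the normalizations (the factors $|P|^4$, $|r|^{-4}$, the Jacobian of the map $(k_1,k_2)\mapsto\bm{c}$, and the Poisson duality constants) combine so that the $(n_1,n_2)=(0,0)$ term matches $\sum_{\bm{x}}^{\mathrm{line}}w(\bm{x}/P)$ exactly without extraneous multiplicative constants, and that the combinatorial sum over sign patterns and pairings produces each rational line with multiplicity one.
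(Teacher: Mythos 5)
Your strategy — parametrize the special solutions as a two‑parameter linear family $\bm{c}=k_1(p,q,0,0)+k_2(0,0,p',q')$, combine the characters in $S_r(\bm{c})$ and $I_r(\theta,\bm{c})$ into a single $\bm{c}\cdot(\bm{x}_{\text{sum}}+P\bm{x}_{\text{int}})/r$, Poisson-summate in $(k_1,k_2)$, and read the main term off the zero dual frequency — is essentially the paper's own approach (modulo the intermediate unimodular change of variables $\bm{x}\mapsto(\bm{y},\bm{z})$, which makes $F(\bm{x})=y_1Q_1(y_1,z_1)+y_2Q_2(y_2,z_2)$ and is what surfaces the structure you need). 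The trivial-case remark should be weakened: when $F_1/F_2$ or $F_3/F_4$ is not a cube the left side does vanish, but the right side does not — it is merely $O(|P|)$, which is absorbed into the error.

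The genuine gap is in your error analysis. After Poisson summation the nonzero dual frequencies do not yield exponential sums that the machinery of Sections~\ref{Se:IntegralEstimates} and~\ref{Se:ExpSums} can control. Those sections estimate the \emph{cubic} Gauss sums $S_r(\bm{c})$ and the oscillatory integrals $J_F$, which pertain to the diagonal cubic form $F$. Here, because $F$ factorises as $y_1Q_1+y_2Q_2$ through a line and one is free in the dual $(\bm{h},\bm{z})$ variables, the objects produced by Poisson are a new family of \emph{quadratic} exponential sums $T_r(\bm{j})=\sum'_a\sum_{\bm{h}}\psi(a\widetilde F(\bm{j},\bm{h})/r)$ and integrals $J_r(\bm{j},\theta)$ with a quadratic phase in the remaining variables. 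These require their own estimates: one must run a Weyl-differencing argument on the quadratic form, compute $T_\varpi$ and $T_{\varpi^2}$ via point counts on conics and Ramanujan sums, handle the degenerate loci $j_1j_2F_0(\bm{j})=0$ separately, and establish parabolic measure bounds for $J_r(\bm{j})$. None of this is "in close analogy with $E_1(P)$": $E_1(P)$ relies on Deligne/Hasse--Weil cancellation over square-free moduli for cubic sums, which has no counterpart here. Without this new apparatus the $O(|P|^{3/2+\varepsilon})$ bound for the nonzero frequencies is unsupported.

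A smaller imprecision: calling the $k_j=0$ contributions "$N_0(P)$-type" conflates two things. The term $k_1=k_2=0$ is literally $N_0(P)$ and is $O(|P|^{1+\varepsilon})$ for $n=4$; the terms with exactly one $k_j=0$ are partially degenerate special vectors $\bm{c}$ (some $c_i=0$, but $\bm c\neq 0$), and to see they contribute $O(|P|^{3/2+\varepsilon})$ one needs the integral bound in Lemma~\ref{lem.integral_estimate_for_I_r} together with the sparse count of such $\bm c$ — not an $N_0$-type argument. Finally, the identification of the zero-frequency term with $\sum^{\text{line}}w(P^{-1}\bm{x})$ is not exact; the correct statement carries an $O(1)$ error from replacing the integral over $\TT^2$ of the weight near the line by the lattice sum, so one should not expect the normalisations to "match exactly."
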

For notational convenience, this lemma only considers the case of lines such that $(i,j,k,l) = (1,2,3,4)$ in the language of Theorem~\ref{Th:TheoTheorem.n=4}. By the symmetry of the situation at hand it is clear that the result follows for any permutation of indices.

\subsection{Analysis of special solutions}
We begin by noting that with an error of $O(\lvert P \rvert^{3/2+\varepsilon})$ we may include tuples $\bm{c} \in \O^4\setminus\{\bm{0}\}$ satisfying~\eqref{eq.condition_c_special_special} such that $c_i = 0$ for at least one $i$ in the sum appearing in the left hand side of~\eqref{eq.special_solutions=lines}. Write $\sum_{\bm{c}}^{\widetilde{\mathrm{spec}}}$ for the sum over such tuples $\bm{c}$. Note for such $\bm{c}$ Lemma~\ref{lem.integral_estimate_for_I_r} gives
\[
I_r(\bm{c}) \ll \lvert P \rvert^{-5/2} \lvert \bm{c} \rvert^{-1},
\]
for any $r \in \O$. Also note that $I_r(\theta, \bm{c}) = 0$ if $\lvert \bm{c} \rvert \gg \lvert P \rvert^{1/2}$. From~\eqref{Eq: DeligneS_r(c)} and Lemma~\ref{Le: ExpSumAverageSquarefull}, where we apply the second part with $m=0$, we obtain
\[
S_r(\bm{c}) \ll \lvert r \rvert^\varepsilon \lvert r_1 \rvert^{3} \lvert r_2\rvert^{4-1/3},
\]
where $r_1$ denotes the cube-free and $r_2$ the cube-full part of $r$. Hence
\[
\sum_{\substack{ r \text{ monic} \\ \lvert r \rvert \leq \widehat{Q}}} \lvert r \rvert^{-4} S_r(\bm{c}) \ll \lvert P \rvert^\varepsilon \left( \sum_{\lvert r_1 \rvert \leq \widehat{Q}} \lvert r_1 \rvert^{-1} \right) \left( \sum_{\lvert r_2 \rvert \leq \widehat{Q}} \lvert r_2 \rvert^{-1/3} \right) \ll \lvert P \rvert^\varepsilon,
\]
since the number of cube-full $r_2$ of a fixed absolute value of $\widehat{Y}$, say, is at most $P(\widehat{Y}^{1/3})$. To summarise, we found that the contribution to the left hand side of~\eqref{eq.special_solutions=lines} is at most
\[
|P|^4\sum_{\substack{r\text{ monic}\\ |r|\leq \widehat{Q}}}|r|^{-4}\sum_{\bm{c}} {\vphantom{\sum}}^{\widetilde{\mathrm{spec}}}S_r(\bm{c})I_r(\bm{c})\ll \lvert P \rvert^{3/2+\varepsilon} \sum_{0 < \lvert \bm{c}\rvert \leq \lvert P \rvert^{1/2}} \hspace{-16pt}{\vphantom{\sum}}^{\widetilde{\mathrm{spec}}} \lvert \bm{c} \rvert^{-1} \ll \lvert P \rvert^{3/2+\varepsilon},
\]
where the last estimate follows since there are only $O(\widehat{C})$ vectors $\bm{c}$ of absolute value $\widehat{C}$, say, appearing in $\sum_{\bm{c}}^{\widetilde{\mathrm{spec}}}$. 

We may assume that both $F_1/F_2$ and $F_3/F_4$ are cubes in $K$. Otherwise the conclusion of the lemma is easily seen to be true, since there are no special solutions and $O(\lvert P \rvert)$ points $\bm{x}$ satisfying~\eqref{eq.ponits_lines_condition}. Therefore there exist at most $O(1)$ many different possible $\rho_i \in \O$ with $(\rho_1,\rho_2) = (\rho_3, \rho_4) = 1$  and $\lambda, \mu \in \O$ such that
\[
F_1 = \lambda \rho_1^3, \quad F_2 = \lambda \rho_2^3, \quad F_3 = \mu \rho_3^3, \quad F_4 = \mu \rho_4^3.
\]
The different possibilites for $\rho_i$ come from the potential existence of non-trivial third roots of unity in $K$. For a choice of $\rho_i \in \O$ if we write
\[
c_1 = \rho_1 d_1, \quad c_2 = \rho_2 d_1, \quad c_3 = \rho_3 d_2, \quad c_4 = \rho_4 d_2,
\]
then as we run through the possible choices of $\rho_i$ and as $\bm{d}$ runs through $\O^2$, then $\bm{c}$ runs through solutions of $F^{*}(\bm{c}) = 0$ satisfying~\eqref{eq.condition_c_special_special} .
Given a choice of $\rho_i$ there exist $\rho_i' \in \O$ such that
\[
\rho_1 \rho_2' - \rho_2 \rho_1' = \rho_3 \rho_4' - \rho_4 \rho_3' = 1.
\]
Then the change of variables $(x_1, x_2, x_3, x_4) \mapsto (y_1,y_2,z_1,z_2)$ given by
\[
\begin{pmatrix}
	y_1 \\
	z_1 \\
	y_2 \\
	z_2
\end{pmatrix} =
\begin{pmatrix}
	\rho_1 & \rho_2 & 0 & 0 \\
	\rho_1' & \rho_2' & 0 & 0 \\
	0 & 0 & \rho_3 & \rho_4 \\
	0 & 0 & \rho_3' & \rho_4'
\end{pmatrix}
\begin{pmatrix}
	x_1 \\
	x_2 \\
	x_3 \\
	x_4
\end{pmatrix}
\]
is unimodular. Moreover the inverse of this is easily seen to be
\[
\begin{pmatrix}
	x_1 \\
	x_2 \\
	x_3 \\
	x_4
\end{pmatrix} = 
\begin{pmatrix}
	\rho_2' & -\rho_2 & 0 & 0 \\
	-\rho_1' & \rho_1 & 0 & 0 \\
	0 & 0 & \rho_4' & -\rho_4 \\
	0 & 0 & -\rho_3' & \rho_3
\end{pmatrix}
\begin{pmatrix}
	y_1 \\
	z_1 \\
	y_2 \\
	z_2
\end{pmatrix}.
\] 
We will write $\bm{x}(\bm{y},\bm{z})$ for $\bm{x}$ arising from this linear transformation.
An easy calculation reveals 
\[
F(\bm{x}(\bm{y},\bm{z})) = y_1 Q_1(y_1,z_1) + y_2 Q_2(y_2,z_2) =: \widetilde{F}(\bm{y},\bm{z}),
\]
where $Q_i$ are the quadratic forms given by
\[
Q_1(y,z) = \frac{\lambda}{4} \left( y^2 + 3 \{ 2\rho_1 \rho_2z - (\rho_1 \rho_2' + \rho_1' \rho_2)y \}^2 \right),
\]
and
\[
Q_2(y,z) = \frac{\mu}{4} \left( y^2 + 3 \{ 2\rho_3 \rho_4z - (\rho_3 \rho_4' + \rho_3' \rho_4)y \}^2 \right).
\]
With this notation we then find 
\[
S_r(\bm{c}) = \sideset{}{'}\sum_{\lvert a \rvert < \lvert r \rvert} \sum_{\lvert \bm{g} \rvert, \lvert \bm{h} \rvert < \lvert r \rvert} \psi \left( \frac{a \widetilde{F}(\bm{g}, \bm{h}) +  \bm{g} \cdot \bm{d}}{r} \right),
\]
and
\[
I_r(\theta, \bm{c}) = \int_{K_\infty^2} \int_{K_\infty^2} w(\bm{x}(\bm{y},\bm{z})) \psi\left(\theta P^3 \widetilde{F}(\bm{y},\bm{z}) +P \frac{\bm{y} \cdot \bm{d}}{r} \right) \dd \bm{y} \dd \bm{z}.
\]
We make the change of variables $\bm{y} = P^{-1}(\bm{g}+r \bm{v})$ in the integral to obtain 
\begin{multline*}
 I_r(\theta, \bm{c}) = \lvert r \rvert^{2} \lvert P \rvert^{-2} \int_{K_\infty^2} \int_{K_\infty^2} w(\bm{x}(P^{-1}(\bm{g}+r \bm{v}),\bm{z})) \\
 \times \psi \left( \theta P^3 \widetilde{F} (P^{-1}(\bm{g}+r\bm{v}),\bm{z}) + \frac{\bm{g} \cdot \bm{d}}{r} \right) \psi(\bm{v} \cdot \bm{d}) \dd \bm{v} \dd \bm{z}.    
\end{multline*}
Hence we find
\[
\sum_{\bm{c}} {\vphantom{\sum}}^{\mathrm{spec}} S_r(\bm{c}) I_r(\theta, \bm{c}) = \lvert r \rvert^2 \lvert P \rvert^{-2} \sum_{\rho_i} \sum_{\lvert \bm{g} \rvert < \lvert r \rvert} \int_{K_\infty^2} \sum_{\bm{d} \in \O^2} \int_{K_\infty^2} f_{\bm{g},\bm{z}}(\theta, \bm{v}) \psi(\bm{v} \cdot \bm{d}) \dd \bm{v} \dd \bm{z},
\]
where $\sum_{\rho_i}$ sums over the finitely many possible choices for $\rho_i \in \mathcal{O}$ as above and where
\[
f_{\bm{g},\bm{z}}(\theta, \bm{v}) = \sideset{}{'}\sum_{\lvert a \rvert < \lvert r \rvert} \sum_{\lvert \bm{h} \rvert < \lvert r \rvert} w(\bm{x}(P^{-1}(\bm{g}+r \bm{v}),\bm{z})) \psi \left( \theta P^3 \widetilde{F} (P^{-1}(\bm{g}+r\bm{v}),\bm{z}) + \frac{a \widetilde{F}(\bm{g}, \bm{h})}{r} \right).
\]
Poisson summation~\eqref{eq.poisson summation} yields
\[
\sum_{\bm{d} \in \O^2} \int_{K_\infty^2} f_{\bm{g},\bm{z}}(\theta, \bm{v}) \psi(\bm{v} \cdot \bm{d}) \dd \bm{v} = \sum_{\bm{s} \in \O^2} f_{\bm{g},\bm{z}}(\theta, \bm{s}).
\] 
We make the change of variables $\bm{j} = \bm{g} + r\bm{s}$ and the substitution $\bm{z} = P^{-1} \bm{t}$ in order to obtain
\[
\sum_{\bm{c}} {\vphantom{\sum}}^{\mathrm{spec}} S_r(\bm{c}) I_r(\bm{c}) = \lvert r \rvert^2 \lvert P \rvert^{-4} \sum_{\rho_i} \sum_{\bm{j} \in \O^2} T_r(\bm{j}) J_r(\bm{j},\theta),
\]
where 
\[
T_r(\bm{j}) = \sideset{}{'}\sum_{\lvert a \rvert < \lvert r \rvert} \sum_{\lvert \bm{h} \rvert < \lvert r \rvert} \psi \left( \frac{a \widetilde{F}(\bm{j},\bm{h})}{r} \right),
\]
and
\[
J_r(\bm{j},\theta) = \int_{K_\infty^2}w(P^{-1}\bm{x}(\bm{j},\bm{t})) \psi(\theta \widetilde{F}(\bm{j}, \bm{t})) \dd \bm{t}.
\]
Further we will write
\[
J_r(\bm{j}) \coloneqq \int_{\lvert \theta \rvert < \lvert r \rvert^{-1} \widehat{Q}^{-1}} J_r(\bm{j},\theta) \dd \theta.
\]
We can summarise our findings until now as follows.
\begin{lemma} \label{lem.special_solutions_sum_first_step}
	We have
	\begin{multline} \label{eq.special_solutions_sum_first_step}
		|P|^4\sum_{\substack{r \; \mathrm{monic}\\ |r|\leq \widehat{Q}}}|r|^{-4}\sum_{\bm{c}} {\vphantom{\sum}}^{\mathrm{spec}}S_r(\bm{c})I_r(\bm{c}) 
		 = \sum_{\rho_i} \sum_{\substack{r \; \mathrm{monic}\\ |r|\leq \widehat{Q}}}|r|^{-2} \sum_{\bm{j} \in \O^2} T_r(\bm{j}) J_r(\bm{j}) + O(\lvert P \rvert^{3/2+\varepsilon}).
	\end{multline}
\end{lemma}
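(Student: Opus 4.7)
The plan is to confirm in order the manipulations already sketched before the lemma statement, with the only genuinely analytic ingredient being the error-term estimate for truncating to non-degenerate special solutions.

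\textbf{Error term.} I first restrict to $\bm{c}$ with all $c_i \neq 0$. A special $\bm{c}$ with some $c_i = 0$ must, by~\eqref{eq.condition_c_special_special}, have a second vanishing coordinate in the same pair, so there are $O(\widehat{C})$ such $\bm{c}$ with $|\bm{c}| \leq \widehat{C}$. Lemma~\ref{lem.integral_estimate_for_I_r} gives $I_r(\bm{c}) \ll |P|^{-5/2}|\bm{c}|^{-1}$, and decomposing $r = r_1 r_2$ into cube-free and cube-full parts and combining~\eqref{Eq: DeligneS_r(c)} with Lemma~\ref{Le: ExpSumEstimate} gives $\sum_{|r|\leq \widehat{Q}}|r|^{-4}|S_r(\bm{c})| \ll |P|^\varepsilon$. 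Since $I_r(\theta,\bm{c})$ vanishes for $|\bm{c}| \gg |P|^{1/2}$, summing dyadically contributes $O(|P|^{3/2+\varepsilon})$, which is absorbed into the stated error.

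\textbf{Parametrisation and substitution.} If $F_1/F_2$ or $F_3/F_4$ is not a cube in $K^\times$ there are no non-degenerate special solutions and the identity is trivial; otherwise write $F_i = \lambda \rho_i^3$ for $i=1,2$ and $F_i = \mu \rho_i^3$ for $i=3,4$, with only finitely many coprime representatives $(\rho_1,\rho_2)$ and $(\rho_3,\rho_4)$ arising from cube roots of unity in $K$. Every non-degenerate special $\bm{c}$ is then $(\rho_1 d_1, \rho_2 d_1, \rho_3 d_2, \rho_4 d_2)$ for a unique $\bm{d} \in \O^2$. Completing the rows to $\SL_2(\O)$ matrices yields the unimodular substitution $(x_1,x_2,x_3,x_4) \leftrightarrow (y_1,z_1,y_2,z_2)$ displayed in the preamble; a direct calculation using $\rho_i \rho_j' - \rho_j \rho_i' = 1$ confirms $F(\bm{x}(\bm{y},\bm{z})) = y_1 Q_1(y_1,z_1) + y_2 Q_2(y_2,z_2)$ with the stated $Q_i$, while $\bm{c}\cdot\bm{x} = d_1 y_1 + d_2 y_2$.

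\textbf{Poisson summation and cleanup.} Substituting $\bm{y} = P^{-1}(\bm{g} + r\bm{v})$ in the integral isolates $\bm{d}$ in a pure phase $\psi(\bm{v}\cdot \bm{d})$, so Poisson summation~\eqref{eq.poisson summation} converts $\sum_{\bm{d}\in\O^2}$ into a Fourier dual sum over $\bm{s} \in \O^2$. The substitutions $\bm{j} = \bm{g} + r\bm{s}$ (so that $\bm{j}$ ranges freely over $\O^2$) and $\bm{z} = P^{-1}\bm{t}$ then collapse the expression to $|r|^2|P|^{-4} T_r(\bm{j}) J_r(\bm{j},\theta)$; integrating over $\theta$ produces $J_r(\bm{j})$ and~\eqref{eq.special_solutions_sum_first_step} follows. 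The main difficulty is really the first step: the $|\bm{c}|^{-1}$ decay of $I_r(\bm{c})$ in $|\bm{c}|$ matches the $O(\widehat{C})$ count of degenerate $\bm{c}$ of size $\widehat{C}$ exactly, leaving no room to spare, which is why the cube-free/cube-full splitting of $r$ to extract $|P|^\varepsilon$ from the $r$-sum is essential.
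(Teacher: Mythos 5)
Your proposal follows the paper's argument step for step: bound the degenerate (some $c_i=0$) tuples using $I_r(\bm{c})\ll|P|^{-5/2}|\bm{c}|^{-1}$, the $r$-sum bound $\sum|r|^{-4}|S_r(\bm{c})|\ll|P|^\varepsilon$, and the $O(\widehat{C})$ count at height $\widehat{C}$; reduce to the case where $F_1/F_2$ and $F_3/F_4$ are cubes; parametrise by $\rho_i$ and $\bm{d}$; complete the $\rho$-rows to a unimodular matrix; and apply Poisson summation over $\bm{d}$ with the substitutions $\bm{y}=P^{-1}(\bm{g}+r\bm{v})$, $\bm{j}=\bm{g}+r\bm{s}$, $\bm{z}=P^{-1}\bm{t}$. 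This is exactly the paper's proof. One small note: the direction of your opening move is phrased backwards (you say you "restrict to" $c_i\neq 0$, but the special sum is already over such $\bm{c}$; the actual step is to \emph{enlarge} it to include degenerate tuples, which are then automatically covered by the $\bm{d}\in\O^2$ parametrisation), though the quantitative estimate you prove is the correct one and the reasoning is otherwise sound.
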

We now follow a strategy that is very similar to the usual delta method. The main term will come from $\bm{j} = \bm{0}$ and it then remains to estimate $T_r(\bm{j})$ and $J_r(\bm{j},\theta)$ for $\bm{j} \neq \bm{0}$.

\subsection{The main term}
\begin{lemma}
    For all $P \in \O \setminus \{ 0 \}$ we have
    \begin{equation*}
        \sum_{\rho_i} \sum_{\substack{r \; \mathrm{monic}\\ |r|\leq \widehat{Q}}}|r|^{-2} T_r(\bm{0}) J_r(\bm{0})  = \sum_{\bm{x}}^{}{\vphantom{\sum}}^{\mathrm{line}} w(P^{-1} \bm{x}) + O(1).
    \end{equation*}
\end{lemma}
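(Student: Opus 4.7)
The backbone of the proof is the observation that $\widetilde{F}(\bm{0}, \bm{t}) = 0$ identically in $\bm{t}$: in the decomposition $\widetilde{F}(\bm{y}, \bm{z}) = y_1 Q_1(y_1, z_1) + y_2 Q_2(y_2, z_2)$, both summands vanish when $\bm{y} = \bm{0}$. Consequently every phase in $T_r(\bm{0})$ and $J_r(\bm{0}, \theta)$ is trivial, so $T_r(\bm{0}) = |r|^2 \Phi(r)$ (where $\Phi$ is the function field Euler totient), the integrand of $J_r(\bm{0}, \theta)$ is independent of $\theta$, and a direct computation yields
\[
J_r(\bm{0}) = |r|^{-1}\widehat{Q}^{-1} \, M, \qquad M \coloneqq \int_{K_\infty^2} w\bigl( P^{-1}\bm{x}(\bm{0}, \bm{t}) \bigr) \dd \bm{t},
\]
with $M$ independent of $r$.

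Assembling these evaluations reduces the left-hand side (for each fixed choice of $\rho_i$) to
\[
\sum_{\substack{r \text{ monic} \\ |r| \leq \widehat{Q}}} |r|^{-2} T_r(\bm{0}) J_r(\bm{0}) = M \widehat{Q}^{-1} \sum_{\substack{r \text{ monic} \\ |r| \leq \widehat{Q}}} \frac{\Phi(r)}{|r|}.
\]
The key elementary identity needed at this stage is $\sum_{r \text{ monic}, \, |r|\leq \widehat{Q}} \Phi(r)/|r| = \widehat{Q}$, which I would verify by using the standard formula $\sum_{|r|=q^n, \, r \text{ monic}} \Phi(r) = q^{2n-1}(q-1)$ for $n \geq 1$ (a consequence of $\sum_{d \mid r} \Phi(d) = |r|$ in $\F_q[t]$) and summing a geometric series. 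The precisely matching prefactor $\widehat{Q}^{-1}$ then collapses the sum to exactly $M$.

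It remains to identify $\sum_{\rho_i} M$ with the line sum. To this end I would apply Poisson summation~\eqref{eq.poisson summation} in reverse to the compactly supported locally constant function $f(\bm{t}) \coloneqq w(P^{-1}\bm{x}(\bm{0}, \bm{t}))$, turning $\sum_{\bm{t}\in\O^2} f(\bm{t})$ into $\sum_{\bm{s}\in\O^2} \hat{f}(\bm{s})$. Writing $w$ as a difference of two box indicators and invoking~\eqref{Eq: OrthogonalityOfCharacters}, $\hat{f}(\bm{s})$ factorises coordinate-wise and is nonzero only if $|s_i| \ll 1/|P|$ for each $i$; for $|P|$ larger than a constant depending only on the finitely many $\rho_i$, this forces $\bm{s} = \bm{0}$, so Poisson summation yields $M = \sum_{\bm{t}\in\O^2} w(P^{-1}\bm{x}(\bm{0}, \bm{t}))$ exactly, with the finitely many bounded-$|P|$ cases contributing $O(1)$. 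Finally, as the choices of $\rho_i$ enumerate the rational lines defined by~\eqref{eq.ponits_lines_condition}, and for each such line the parameterisation $\bm{t}\mapsto \bm{x}(\bm{0}, \bm{t})$ is a bijection with $\O^2$ (distinct lines meet only at $\bm{0} \notin \supp(w)$), summing over $\rho_i$ gives $\sum_{\bm{x}}^{\mathrm{line}} w(P^{-1}\bm{x}) + O(1)$. The step I expect to require the most care is the Poisson summation argument, whose clean conclusion relies on $\hat{f}$ vanishing at every nonzero lattice point once $|P|$ is large enough, which in turn depends crucially on the explicit box shape of the weight $w$.
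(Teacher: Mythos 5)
Your proof is correct and runs parallel to the paper's, but both key steps are executed differently. For the arithmetic sum over $r$, you evaluate $T_r(\bm 0)=|r|^2\Phi(r)$ and then prove the totient identity $\sum_{r\text{ monic},\,|r|\leq\widehat Q}\Phi(r)/|r|=\widehat Q$ by elementary counting; the paper instead recognises the quantity $\sum_r\sideset{}{'}\sum_{|a|<|r|}\int_{|\theta|<|r|^{-1}\widehat Q^{-1}}\dd\theta$ as the total measure of the Farey dissection~\eqref{Eq: DirichletDissection}, so it equals $\mu(\TT)=1$ with no computation at all. Your version is more explicit but the paper's is shorter and makes the origin of the constant $1$ transparent. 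For the passage from the archimedean integral $M$ to the lattice sum, you invoke Poisson summation applied to $\bm t\mapsto w(P^{-1}\bm x(\bm 0,\bm t))$ and use that the transform of this box-indicator is supported in $|\bm s|\ll|P|^{-1}$, so only $\bm s=\bm 0$ survives once $|P|$ is large; the paper instead tiles $K_\infty^2=\bigsqcup_{\bm z\in\O^2}(\bm z+\TT^2)$ and observes that on each tile with $\bm z\neq\bm 0$ the integrand is constant and equal to its value at $\bm z$, with the $\bm z=\bm 0$ tile contributing $O(1)$. These two mechanisms are of course the same phenomenon seen from dual sides; Poisson summation makes the cancellation automatic, whereas the tiling argument stays entirely in physical space. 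One small issue that both you and the paper gloss over is the final identification $\sum_{\bm x}^{\mathrm{line}}w(P^{-1}\bm x)=\sum_{\rho_i}\sum_{\bm z\in\O^2}w(P^{-1}\bm x(\bm 0,\bm z))$: you justify it by asserting distinct lines meet only at the origin, which is not literally true when two choices of $\rho_i$ share one of the ratios $\rho_1/\rho_2$ or $\rho_3/\rho_4$; in that case the intersection is a one-dimensional subspace and would contribute $O(|P|)$, far exceeding $O(1)$. The paper's ``it is easily seen'' hides the same point. In any event this is not a defect of your argument relative to theirs, and the structure of the possible $\rho_i$ (determined by cube roots of unity) does rule out the problematic configuration, so the identity holds as stated.
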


\begin{proof}
Since $\widetilde{F}(\bm{0},\bm{z}) = 0$ for all $\bm{z} \in K_\infty^2$ we have 
\[
T_r(\bm{0}) =  \sideset{}{'}\sum_{\lvert a \rvert < \lvert r \rvert}\lvert r \rvert^2,
\]
and 
\[
J_r(\bm{0},\theta) = \int_{K_\infty^2} w(P^{-1}\bm{x}(\bm{0},\bm{t})) \dd \bm{t}.
\]
Therefore, the term arising from $\bm{j} = \bm{0}$ on the right hand side of~\eqref{eq.special_solutions_sum_first_step} is equal to
\[
\sum_{\rho_i} \int_{K_\infty^2} w(P^{-1}\bm{x}(\bm{0},\bm{t})) \dd \bm{t}  \sum_{\substack{r \; \mathrm{monic}\\ |r|\leq \widehat{Q}}}  \sideset{}{'}\sum_{\lvert a \rvert < \lvert r \rvert} \int_{|\theta|<|r|^{-1}\widehat{Q}^{-1}} \dd \theta.
\]
But from Dirichlet's approximation theorem~\eqref{Eq: DirichletDissection} we see
\[
\sum_{\substack{r \; \mathrm{monic}\\ |r|\leq \widehat{Q}}}  \sideset{}{'}\sum_{\lvert a \rvert < \lvert r \rvert} \int_{|\theta|<|r|^{-1}\widehat{Q}^{-1}} \dd \theta = \mu \left( \mathbb{T} \right) = 1.
\]

Further, it is easily seen that
\[
\sum_{\bm{x}}^{}{\vphantom{\sum}}^{\mathrm{line}} w(P^{-1} \bm{x}) = \sum_{\rho_i} \sum_{\bm{z} \in \O^2} w(P^{-1}\bm{x}(\bm{0},\bm{z})).
\]
But since $K_\infty^2 = \bigsqcup_{\bm{z} \in \O^2} (\bm{z} + \mathbb{T})$ we have 
\[
\int_{K_\infty^2}  w(P^{-1}\bm{x}(\bm{0},\bm{t}))  \dd \bm{t} = \sum_{\bm{z} \in \O^2} \int_{\TT^2} w(P^{-1}\bm{x}(\bm{0},\bm{z}+\bm{\alpha})) \dd \bm{\alpha}.
\]
If $\bm{z} \in \O \setminus \{ \bm{0} \}$ then $\lvert \bm{x}(\bm{0},\bm{z} + \bm{\alpha}) \rvert = \lvert \bm{x}(\bm{0},\bm{z}) \rvert$ for all $\bm{\alpha} \in \TT^2$ and so 
\[
\int_{\TT^2} w(P^{-1}\bm{x}(\bm{0},\bm{z}+\bm{\alpha})) \dd \bm{\alpha} = w(P^{-1}\bm{x}(\bm{0},\bm{z}))
\]
for such $\bm{z}$. We also clearly have $\int_{\TT^2} w(P^{-1}\bm{x}(\bm{0},\bm{\alpha})) \dd \bm{\alpha} \ll 1$ and so
\[
\int_{K_\infty^2} w(P^{-1}\bm{x}(\bm{0},\bm{t})) \dd \bm{t} =  \sum_{\bm{z} \in \O^2} w(P^{-1}\bm{x}(\bm{0},\bm{z})) + O(1),
\]
whence the Lemma follows.
\end{proof}

\subsection{Estimating the error term}
In this section we make a choice of $\rho_1, \hdots, \rho_4$ and  bound the contribution made from terms such that $\bm{j} \neq \bm{0}$. Once we showed the desired bound for a particular choice, Lemma~\ref{lem.special_soln=lines} will follow since there are only $O(1)$ different possibilities for $\rho_i$.

We begin by bounding $J_r(\bm{j})$
in the case where $\bm{j} \neq \bm{0}$. Note first that $w(P^{-1} (\bm{x}(\bm{j},\bm{t}))) = 0 $ if $\bm{j} \gg \lvert P \rvert$ and so $J_r(\bm{j}) = 0$ if $\bm{j} \gg \lvert P \rvert$. Further this allows us to exchange the integral over $\theta$ with the sum over $\bm{j}$ in~\eqref{eq.special_solutions_sum_first_step}. Note further from~\eqref{Eq: OrthogonalityOfCharacters} that we have
\[
\int_{\lvert \theta \rvert < \lvert r \rvert^{-1} \widehat{Q}^{-1}} \psi(\theta \widetilde{F}(\bm{j},\bm{t})) \dd \theta = \begin{cases}
\lvert r \rvert^{-1} \widehat{Q}^{-1}, \quad &\text{if $\lvert \widetilde{F}(\bm{j},\bm{t})\rvert <\lvert r \rvert \widehat{Q}$} \\
0, &\text{otherwise.}
\end{cases}
\]
Thus we find
\[
J_r(\bm{j}) \ll \mu(\bm{j},r) \lvert r\rvert^{-1} \widehat{Q}^{-1},
\]
where
\[
\mu(\bm{j},r) = \mathrm{meas}\left( \left\{ \bm{t} \in K_\infty^2 \colon \lvert \bm{t} \rvert \ll \lvert P \rvert, \; \lvert \widetilde{F}(\bm{j},\bm{t}) \rvert <\lvert r \rvert \widehat{Q} \right\}\right).
\]
To estimate this measure we simplify the expressions involved by making the substitution
\[
u_1 = 2 \rho_1 \rho_2 t_1 - (\rho_1 \rho_2' + \rho_1' \rho_2)j_1, \quad u_2 = 2 \rho_3 \rho_4 t_2 - (\rho_3 \rho_4' + \rho_3' \rho_4)j_1.
\]
After this linear change of variables $\widetilde{F}$ takes the form 
\[
\widetilde{G}(\bm{j},\bm{u}) = \lambda j_1(3u_1^2+j_1^2) + \mu j_2 (3u_2^2+j_2^2).
\]
Since the change of variables is linear of constant, non-vanishing Jacobian it is sufficient to consider
\[
\mu_{\widetilde{G}}(\bm{j}, r) \coloneqq  \mathrm{meas} \left(\left\{ \bm{u} \in K_\infty^2 \colon \lvert \bm{u} \rvert \ll \lvert P \rvert, \; \lvert \widetilde{G}(\bm{j},\bm{u}) \rvert <\lvert r \rvert \widehat{Q} \right\}\right).
\]
If $j_2=0$ then using Lemma~\ref{lem.measure_parabola} it is easily seen that
\[
\mu_{\widetilde{G}}(\bm{j},r) \ll \lvert P \rvert \left(\frac{\lvert r \rvert \widehat{Q}}{\lvert j_1 \rvert}\right)^{1/2},
\]
and similarly if $j_1=0$. So assume $j_1j_2 \neq 0$.
In  this case, note that we have
\[
\mu_{\widetilde{G}}(\bm{u}, r) \ll \sum_{k,m = -\infty}^{\log_q\lvert P \rvert} \sum_{\substack{U_1 = q^k \\ U_2 = q^m}} \mu_{\widetilde{G}}(\bm{j},r,U_1,U_2),
\]
where
\[
\mu_{\widetilde{G}}(\bm{j},r,U_1,U_2) = \mathrm{meas}\left( \left\{ \bm{u} \in K_\infty^2 \colon  \lvert u_1 \rvert = U_1, \;\lvert u_2 \rvert = U_2, \; \left\lvert \widetilde{G}(\bm{j},\bm{u}) \right\rvert <\lvert r \rvert \widehat{Q} \right\}\right).
\]
In the case where $U_1$ or $U_2 < \lvert P \rvert^{-1}$ we can use the trivial bound $O(U_1  U_2)$ for $\mu_{\widetilde{G}}(\bm{j},r,U_1,U_2)$ to deduce that the total contribution arising from such $U_1, U_2$ is bounded by $O(1)$. For the remaining contribution note if $\bm{u}$ satisfies $\widetilde{G}(\bm{j},\bm{u}) = 0$ then $u_1^2 = A + O(\lvert r \rvert \widehat{Q}/\lvert j_1 \rvert)$ for some function $A(j_1,j_2,u_2)$ and thus $u_1$ lies in a subset of measure $O(\lvert r \rvert \widehat{Q}/(U_1 \lvert j_1 \rvert))$. Therefore $\mu_{\widetilde{G}}(\bm{j},r,U_1,U_2) \ll U_2 \lvert r \rvert \widehat{Q}/(U_1 \lvert j_1 \rvert)$. Similarly, $\mu_{\widetilde{G}}(\bm{j},r,U_1,U_2) \ll U_1 \lvert r \rvert \widehat{Q}/(U_2 \lvert j_2 \rvert)$. Putting this together yields
\[
\mu_{\widetilde{G}}(\bm{j},r,U_1,U_2) \ll  \lvert r \rvert \widehat{Q} \lvert j_1 j_2 \rvert^{-1/2}.
\]
Since there are $\lvert P \rvert^\varepsilon$ pairs $U_1,U_2$ such that $\lvert P \rvert^{-1} \leq U_1, U_2 \leq \lvert P \rvert$ we deduce 
\[
\mu(\bm{j},r) \ll 1+ \lvert P \rvert^\varepsilon \lvert r \rvert \widehat{Q} \lvert j_1 j_2 \rvert^{-1/2}.
\]
We summarise our observations in the following lemma.
\begin{lemma} \label{lem.n=4_integral_bounds}
    Let $\bm{j} \in \O^2 \setminus \{ \bm{0} \}$ be such that $\lvert \bm{j} \rvert \ll \lvert P \rvert$. If $j_1j_2 \neq 0$, then we have
    \begin{equation} \label{eq.n=4_int_bound_1}
        J_r(\bm{j}) \ll \lvert P \rvert^\varepsilon \lvert j_1 j_2\rvert^{-1/2}.
    \end{equation}
    If $j_2 = 0$, then we have
    \begin{equation}
        J_r(\bm{j}) \ll \frac{\lvert P \rvert^{1/4}}{\left( \lvert j_1 \rvert \lvert r \rvert \right)^{1/2} }.
    \end{equation}
\end{lemma}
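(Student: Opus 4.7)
The plan is to combine the measure bounds on $\mu_{\widetilde{G}}(\bm{j}, r)$ derived in the paragraphs preceding the lemma with the size condition $\widehat{Q} \asymp \lvert P \rvert^{3/2}$ from~\eqref{Eq: SizeQ}. The starting point, already recorded in the excerpt, is the inequality
\[
J_r(\bm{j}) \ll \mu(\bm{j},r) \lvert r \rvert^{-1} \widehat{Q}^{-1},
\]
which follows from orthogonality of $\psi$ applied to the inner $\theta$-integral, together with the support condition $\lvert \bm{t} \rvert \ll \lvert P \rvert$ coming from the weight $w(P^{-1}\bm{x}(\bm{j},\bm{t}))$. Since the linear change of variables $u_1 = 2\rho_1\rho_2 t_1 - (\rho_1 \rho_2' + \rho_1' \rho_2)j_1$ and $u_2 = 2\rho_3\rho_4 t_2 - (\rho_3\rho_4' + \rho_3'\rho_4)j_1$ has non-vanishing constant Jacobian, we have $\mu(\bm{j},r) \asymp \mu_{\widetilde{G}}(\bm{j},r)$, so it suffices to feed the two cases of the $\mu_{\widetilde{G}}$-bound into the displayed inequality.

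For the case $j_1 j_2 \neq 0$ I would substitute $\mu_{\widetilde{G}}(\bm{j},r) \ll 1 + \lvert P \rvert^\varepsilon \lvert r \rvert \widehat{Q} \lvert j_1 j_2 \rvert^{-1/2}$, which yields
\[
J_r(\bm{j}) \ll \lvert r \rvert^{-1}\widehat{Q}^{-1} + \lvert P \rvert^\varepsilon \lvert j_1 j_2 \rvert^{-1/2}.
\]
The contribution from the constant term is $O(\widehat{Q}^{-1}) = O(\lvert P \rvert^{-3/2})$, and because $\lvert \bm{j} \rvert \ll \lvert P \rvert$ forces $\lvert j_1 j_2 \rvert^{1/2} \ll \lvert P \rvert \leq \lvert r \rvert \widehat{Q}$, this term is absorbed into the second one and~\eqref{eq.n=4_int_bound_1} follows. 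For the case $j_2 = 0$, the bound $\mu_{\widetilde{G}}(\bm{j},r) \ll \lvert P \rvert (\lvert r \rvert \widehat{Q}/\lvert j_1 \rvert)^{1/2}$ gives
\[
J_r(\bm{j}) \ll \frac{\lvert P \rvert}{(\lvert j_1 \rvert \lvert r \rvert)^{1/2}\widehat{Q}^{1/2}} \ll \frac{\lvert P \rvert^{1/4}}{(\lvert j_1 \rvert \lvert r \rvert)^{1/2}},
\]
upon using $\widehat{Q}^{1/2} \asymp \lvert P \rvert^{3/4}$.

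The substantive work has in fact already been carried out in the text preceding the lemma statement, namely the reduction to the measure of a cubic sublevel set, the $q$-adic decomposition of the $\bm{u}$-range, and two applications of Lemma~\ref{lem.measure_parabola} yielding the symmetric bounds $\mu_{\widetilde{G}}(\bm{j},r,U_1,U_2) \ll \min\{U_2 \lvert r \rvert \widehat{Q}/(U_1 \lvert j_1 \rvert), U_1 \lvert r \rvert \widehat{Q}/(U_2 \lvert j_2 \rvert)\}$, whose geometric mean is responsible for the $\lvert j_1 j_2 \rvert^{-1/2}$ factor. What remains is purely bookkeeping, so I anticipate no serious obstacle: the only point requiring a moment of care is verifying that the $O(1)$ term in the $j_1 j_2 \neq 0$ bound on $\mu_{\widetilde{G}}$ is genuinely harmless after dividing by $\lvert r \rvert \widehat{Q}$, which is precisely where the lower bound $\widehat{Q} \gg \lvert P \rvert^{3/2}$ is invoked.
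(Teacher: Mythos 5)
Your proposal is correct and follows exactly the path the paper intends: the lemma is presented as a summary of the measure computations done in the surrounding paragraphs, and you have simply made explicit the final division by $\lvert r \rvert \widehat{Q}$ and the invocation of \eqref{Eq: SizeQ}. The absorption of the $O((\lvert r \rvert \widehat{Q})^{-1})$ term using $\lvert j_1 j_2 \rvert^{1/2} \ll \lvert P \rvert \leq \lvert r \rvert \widehat{Q}$ and the substitution $\widehat{Q}^{1/2} \gg \lvert P \rvert^{3/4}$ in the $j_2 = 0$ case are precisely the bookkeeping the paper leaves implicit.
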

Next, we turn to estimating the exponential sums $T_r(\bm{j})$. Via the Chinese remainder theorem we have for all $r_1,r_2 \in \O$ such that $(r_1,r_2) = 1$ that
\begin{equation}\label{Eq: FactorT_r(j)}
T_{r_1 r_2}(\bm{j}) = T_{r_1}(\bm{j}) T_{r_2}(\bm{j}).
\end{equation}
Thus we may put our focus on $T_{r}(\bm{j})$ where $r = \varpi^k$ for irreducible $\varpi \in \O$. Note that
\[
\left\lvert \sum_{\lvert \bm{h} \rvert < \lvert r \rvert} \psi \left( \frac{a \widetilde{F}(\bm{j},\bm{h})}{r} \right) \right\rvert \leq \left\lvert \sum_{\lvert h_1 \rvert < \lvert r \rvert} \psi \left( \frac{a j_1 Q_1(j_1,h_1)}{r} \right) \right\rvert \, \left\lvert \sum_{\lvert h_1 \rvert < \lvert r \rvert} \psi \left( \frac{a j_2 Q_2(j_2,h_2)}{r} \right)  \right\rvert.
\]
A simple Weyl differencing type of argument further yields
\begin{align*}
    \left\lvert \sum_{\lvert h_1 \rvert < \lvert r \rvert} \psi \left( \frac{a j_1 Q_1(j_1,h_1)}{r} \right) \right\rvert^2 &=  \sum_{\lvert h \rvert, \lvert h_1 \rvert < \lvert r \rvert} \psi \left( \frac{a j_1 (Q_1(j_1,h+h_1) - Q_1(j_1,h_1))}{r} \right) \\
    &\ll \sum_{\lvert h \rvert < \lvert r \rvert} \left\lvert \sum_{\lvert h_1 \rvert < \lvert r \rvert} \psi \left( \frac{6a \lambda j_1 \rho_1^2 \rho_2^2 j_1 h_1 h }{r} \right) \right\rvert \\
    &= \lvert r \rvert \, \# \{ h \in \O \colon \lvert h \rvert < \lvert r \rvert, r \mid 6a \lambda j_1 \rho_1^2 \rho_2^2 j_1 h \} \\
    &\ll \lvert r \rvert \,\lvert (r,6a \lambda j_1 \rho_1^2 \rho_2^2 j_1 h) \rvert \\
    &\ll \lvert r \rvert \, \lvert (r,j_1) \rvert.
\end{align*}
We can find a similar estimate for the sum over $h_2$, which gives 
\[
T_r(\bm{j}) \ll \lvert r \rvert^2  \lvert (r,j_1) \rvert^{1/2} \lvert (r,j_2) \rvert^{1/2}.
\]
This will be sufficient for our purposes if $r$ is cube-full. However, for $r = \varpi$ or $r= \varpi^2$ we can do better. We begin by considering the case when $r = \varpi$ and we will further assume $\varpi \nmid (j_1,j_2)$. Note first that
\[
 \sideset{}{'}\sum_{\lvert a \rvert < \lvert \varpi \rvert}  \psi \left( \frac{a \widetilde{F}(\bm{j},\bm{h})}{\varpi} \right) = \sum_{\substack{\lvert a \rvert < \lvert \varpi \rvert \\ a \neq 0}}  \psi \left( \frac{a \widetilde{F}(\bm{j},\bm{h})}{\varpi} \right) = \begin{cases}
 \lvert \varpi \rvert -1, \quad&\text{if $\varpi \mid \widetilde{F}(\bm{j},\bm{h}$}), \\
 -1, &\text{otherwise.}
 \end{cases}
\]
Therefore we get
\begin{align*}
    T_\varpi(\bm{j}) &= (\lvert \varpi \rvert-1) \# \left\{ \lvert \bm{h} \rvert < \lvert \varpi \rvert \colon \varpi \mid \widetilde{F}(\bm{j},\bm{h})  \right\} - \# \left\{ \lvert \bm{h} \rvert < \lvert \varpi \rvert \colon \varpi \nmid \widetilde{F}(\bm{j},\bm{h})  \right\} \\
    &= \lvert \varpi \rvert \# \left\{ \lvert \bm{h} \rvert < \lvert \varpi \rvert \colon \varpi \mid \widetilde{F}(\bm{j},\bm{h})  \right\} - \lvert \varpi \rvert^2.
\end{align*}
The equation  $\widetilde{F}(\bm{j},\bm{h}) \equiv 0 \mod \varpi$ may be regarded as $Q(h_1,h_2,1)$ for a ternary quadratic form $Q(x,y,z)$. The quadratic form $Q$ is non-singular in $\O/\varpi$ if $\varpi\nmid j_1j_2 F_0(\bm{j})$, where $F_0(\bm{j}) = \lambda j_1^3 + \mu j_2^3$.
Since $\varpi$ is irreducible we have $\O/\varpi \cong \mathbb{F}_{\lvert \varpi \rvert}$ and so if $\varpi \nmid j_1j_2 F_0(\bm{j})$ then Theorem 6.26 in~\cite{lidl1997finite} gives
\[
\# \left\{ \lvert \bm{h} \rvert < \lvert \varpi \rvert \colon \varpi \mid \widetilde{F}(\bm{j},\bm{h})  \right\} = \lvert \varpi \rvert + O(1).
\]
We deduce $T_\varpi(\bm{j}) \ll \lvert \varpi \rvert$ in this case. Since $\varpi \nmid (j_1,j_2)$ the form $Q$ does not vanish identically in $\O/\varpi$ and so we have  
\[
\# \left\{ \lvert \bm{h} \rvert < \lvert \varpi \rvert \colon \varpi \mid \widetilde{F}(\bm{j},\bm{h})  \right\} \ll \lvert \varpi \rvert,
\]
whence $T_\varpi(\bm{j}) \ll \lvert \varpi \rvert^2$ if $\varpi \mid j_1j_2 F_0(\bm{j})$.

We now turn to analysing $T_{\varpi^2}(\bm{j})$. We assume $\varpi \nmid \lambda \mu \prod_{i=1}^5 \rho_i$. This condition affects only finitely many primes $\varpi$ and so the estimates that we obtain under this condition hold in general by adjusting the resulting constant. Put
\[
k_1 = 2 \rho_1 \rho_2 h_1 - (\rho_1 \rho_2' + \rho_1' \rho_2)j_1, \quad \text{and} \quad k_2 = 2 \rho_3 \rho_4 h_2 - (\rho_3 \rho_4' + \rho_3' \rho_4)j_2,
\]
so that after this change of variables we have
\[
\widetilde{F}(\bm{j},\bm{k}(\bm{h})) = \frac{1}{4}F_0(\bm{j}) + \frac{3}{4}(\lambda j_1 k_1^2 + \mu j_2 k_2^2).
\]
By our assumption on $\varpi$, as $\bm{h}$ ranges through values $\lvert \bm{h} \rvert < \lvert \varpi^2 \rvert$ we also have that $\bm{k}$ ranges through $\lvert \bm{k} \rvert < \lvert \varpi^2 \rvert$ under this change of variables. Hence we obtain
\[
T_{\varpi^2}(\bm{j}) = \sideset{}{'}\sum_{\lvert a \rvert < \lvert \varpi \rvert^2} \psi \left( \frac{a F_0(\bm{j})}{4\varpi^2} \right) \sum_{\lvert \bm{k} \rvert < \lvert \varpi \rvert^2} \psi \left( \frac{3a(\lambda j_1 k_1^2 + \mu j_2 k_2^2)}{4\varpi^2} \right).
\]
We can write $\bm{k} = \bm{u} + \varpi \bm{v}$ where $\lvert \bm{u} \rvert, \lvert \bm{v} \rvert < \lvert \varpi \rvert$. Then
\begin{align*}
   \sum_{\lvert k_i \rvert < \lvert \varpi \rvert^2} \psi \left( \frac{3a\lambda j_i k_i^2}{4\varpi^2} \right) &= \sum_{\lvert u_i \rvert < \lvert \varpi \rvert}  \psi \left( \frac{3a\lambda j_i u_i^2}{4\varpi^2} \right) \sum_{\lvert v_i \rvert < \lvert \varpi \rvert}  \psi \left( \frac{3a\lambda j_i u_i v_i}{4\varpi^2} \right) \\
   &= \lvert \varpi \rvert \sum_{\substack{\lvert u_i \rvert < \lvert \varpi \rvert \\ \varpi \mid j_i u_i}}  \psi \left( \frac{3a\lambda j_i u_i^2}{4\varpi^2} \right),
\end{align*}
for $i=1,2$ since $\varpi \nmid  a \lambda$. If $\varpi \nmid j_1j_2$ the above expression is just $\lvert \varpi \rvert$ and so we get in this case
\[
T_{\varpi^2}(\bm{j}) = \lvert \varpi \rvert^2 \sideset{}{'}\sum_{\lvert a \rvert < \lvert \varpi \rvert^2} \psi \left( \frac{a F_0(\bm{j})}{4\varpi^2} \right) = \begin{cases}
0, \quad &\text{if $\varpi \nmid F_0(\bm{j})$,} \\
-\lvert \varpi \rvert^3 &\text{if $\varpi \parallel F_0(\bm{j})$,}  \\
\lvert \varpi \rvert^4 - \lvert \varpi \rvert^3 &\text{if $\varpi^2 \mid F_0(\bm{j})$,}
\end{cases}
\]
and so in particular
\[
T_{\varpi^2}(\bm{j}) \ll \lvert \varpi \rvert^2 \lvert (\varpi^2,F_0(\bm{j})) \rvert.
\]
If, on the other hand, $\varpi \mid j_1$ we claim that $T_{\varpi^2}(\bm{j}) = 0$. Due to the standing assumption $\varpi \nmid (j_1,j_2)$ it follows that $\varpi \nmid j_2$ and thus the above gives
\[
T_{\varpi^2}(\bm{j}) = \lvert \varpi \rvert^2 \sum_{\lvert u_1 \rvert < \lvert \varpi \rvert} \sideset{}{'}\sum_{\lvert a \rvert < \lvert \varpi \rvert^2} \psi \left( \frac{a(F_0(\bm{j}) + 3\lambda j_1 u_1^2)}{4\varpi^2} \right).
\]
This vanishes unless $\varpi \mid F_0(\bm{j}) + 3\lambda j_1 u_1^2$. But since $\varpi \mid j_1$ this would imply $\varpi \mid \mu j_2^3$ and hence $\varpi \mid j_2$. As we excluded this case by assumption the claim follows. We summarise our analysis of $T_r(\bm{j})$ in a lemma.
\begin{lemma}\label{Eq: EstimatesT_r(j)}
    Let $\bm{j} \in \O^2 \setminus \{ \bm{0} \}$. Then we have
    \[
    T_r(\bm{j}) \ll \lvert r \rvert^2  \lvert (r,j_1) \rvert^{1/2} \lvert (r,j_2) \rvert^{1/2}
    \]
    for any $r \in \O \setminus\{0\}$. Further, if $r = \varpi$ or $r = \varpi^2$ for some irreducible $\varpi \in \O$ and if $\varpi \nmid (j_1,j_2)$ then we get
    \[
    T_r(\bm{j}) \ll \lvert r \rvert \lvert (r, j_1j_2 F_0(\bm{j})) \rvert.
    \]
\end{lemma}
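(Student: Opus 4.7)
The plan is to assemble the estimates derived in the paragraphs preceding the statement into the stated form of the lemma, relying on the Chinese remainder factorisation~\eqref{Eq: FactorT_r(j)} to reduce to the case of prime power modulus.

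For part (i), I would exploit the decomposition $\widetilde{F}(\bm{j},\bm{h}) = j_1 Q_1(j_1,h_1) + j_2 Q_2(j_2,h_2)$ to factor the inner sum over $\bm{h}$ as a product of two one-variable exponential sums. For each factor I would square the sum and apply Weyl differencing via the shift $h_i \mapsto h_i + h$; the resulting inner linear exponential sum in $h_i$ vanishes unless $r$ divides a fixed non-zero multiple of $j_i h$, forcing $h$ to lie in a set of cardinality $O(|(r,j_i)|)$. Taking square roots and combining with the trivial bound $O(|r|)$ for the sum over $a$ yields the stated $|r|^2 |(r,j_1)|^{1/2} |(r,j_2)|^{1/2}$.

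For part (ii), I would handle the two prime power cases separately. When $r = \varpi$ and $\varpi \nmid (j_1,j_2)$, orthogonality over $a$ reduces $T_\varpi(\bm{j})$ to counting solutions $\bm{h} \in (\O/\varpi)^2$ of the quadratic congruence $\widetilde{F}(\bm{j},\bm{h}) \equiv 0 \pmod{\varpi}$. Viewed as a ternary quadratic form $Q(x,y,z)$, this is non-singular modulo $\varpi$ precisely when $\varpi \nmid j_1 j_2 F_0(\bm{j})$; in that case Theorem~6.26 of~\cite{lidl1997finite} provides $|\varpi| + O(1)$ solutions and yields $T_\varpi(\bm{j}) \ll |\varpi|$. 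In the singular case $Q$ does not vanish identically modulo $\varpi$, so there are $O(|\varpi|)$ solutions, giving $T_\varpi(\bm{j}) \ll |\varpi|^2$, which is precisely $|\varpi| \cdot |(\varpi, j_1 j_2 F_0(\bm{j}))|$.

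For $r = \varpi^2$, after absorbing into the implicit constant the finitely many primes $\varpi$ dividing $\lambda \mu \prod_i \rho_i$, I would apply the linear substitution $h_i \mapsto k_i$ to bring the cubic to the shape $\tfrac14 F_0(\bm{j}) + \tfrac34 (\lambda j_1 k_1^2 + \mu j_2 k_2^2)$. Decomposing $\bm{k} = \bm{u} + \varpi \bm{v}$ and summing over $\bm{v}$ forces $\varpi \mid j_i u_i$ for $i = 1,2$. If $\varpi \nmid j_1 j_2$ then $u_1 = u_2 = 0$ and the remaining sum over $a$ collapses to a Ramanujan-type sum of $F_0(\bm{j})/(4 \varpi^2)$, producing the bound $|\varpi|^2 |(\varpi^2, F_0(\bm{j}))|$, which coincides with $|\varpi|^2 |(\varpi^2, j_1 j_2 F_0(\bm{j}))|$ since $\varpi \nmid j_1 j_2$. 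If instead $\varpi \mid j_1$, then the standing assumption $\varpi \nmid (j_1,j_2)$ forces $\varpi \nmid j_2$, and the residual congruence demands $\varpi \mid F_0(\bm{j}) \equiv \mu j_2^3 \pmod{\varpi}$, a contradiction, so $T_{\varpi^2}(\bm{j}) = 0$, which trivially satisfies the claimed bound. The main obstacle is the bookkeeping for the $r = \varpi^2$ case, particularly verifying the vanishing when $\varpi$ divides exactly one of the $j_i$.
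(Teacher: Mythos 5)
Your proposal follows the paper's argument step for step: the CRT factorisation from~\eqref{Eq: FactorT_r(j)}, the product decomposition and Weyl differencing for the general bound, orthogonality over $a$ and the ternary quadratic form count via Theorem~6.26 of Lidl--Niederreiter for $r=\varpi$, and the linear substitution plus $\bm{k}=\bm{u}+\varpi\bm{v}$ decomposition together with the Ramanujan sum evaluation for $r=\varpi^2$, including the vanishing argument when $\varpi$ divides exactly one of the $j_i$. This is essentially identical to the paper's proof.
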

We are now finally in a position to give a sufficiently good upper bound for the right hand side of~\eqref{eq.special_solutions_sum_first_step} and thus complete the proof of Theorem~\ref{Th:TheoTheorem.n=4}.
For this we fix a choice of $\rho_i$ and estimate the sum \[
\mathcal{S}\coloneqq \sum_{\substack{r\text{ monic}\\ |r|\leq \widehat{Q}}}|r|^{-2}\sum_{\substack{\bm{j}\in\O^2\\ |\bm{j}|\ll|P|}}T_r(\bm{j})J_r(\bm{j}).
\]
Since there are $O(1)$ possibilities for the $\rho_i$'s, this will be enough to show $\mathcal{S}\ll |P|^{3/2+\varepsilon}$.\\

We begin with the case when $j_1j_2F_0(\bm{j})\neq 0$. In this situation Lemma~\ref{lem.n=4_integral_bounds} yields 
\begin{equation}
    \mathcal{S}\ll |P|^{\varepsilon}\sum_{\bm{j}}|j_1j_2|^{-1/2}\sum_{\substack{r\text{ monic}\\ |r|\leq \widehat{Q}}}|r|^{-2}|T_r(\bm{j})|.
\end{equation}
Next we write $r=r_1r_2$ where $r_1$, $r_2$ monic are coprime,  and where $r_1$ is cube-free and $\varpi\mid r_1$ implies $\varpi\nmid(j_1,j_2)$. We can then factor $T_r(\bm{j})$ by~\eqref{Eq: FactorT_r(j)} to obtain 
\begin{align*}
    \mathcal{S}&\ll |P|^{\varepsilon}\sum_{\bm{j}}|j_1j_2|^{-1/2}\sum_{r_2}|r_2|^{-2}|T_{r_2}(\bm{j})|\sum_{r_1}|r_1|^{-2}|T_{r_1}(\bm{j})|\\
    &\ll |P|^{\varepsilon}\sum_{\bm{j}}|j_1j_2|^{-1/2}\sum_{r_2}|r_2|^{-2}|T_{r_2}(\bm{j})|\sum_{r_1}\frac{|(r_1,j_1j_2F_0(\bm{j}))|}{|r_1|},
\end{align*}
where we used Lemma~\ref{Eq: EstimatesT_r(j)} to estimate $T_{r_1}(\bm{j})$. For the inner sum we have 
\[
\sum_{r_1}\frac{|(r_1,j_1j_2F_0(\bm{j}))|}{|r_1|}\ll |P|^{\varepsilon}|j_1j_2F_0(\bm{j})|^{\varepsilon}\ll |P|^{2\varepsilon},
\]
since we assume $j_1j_2F_0(\bm{j})\neq 0$ and in general it holds $\widehat{Y}^{-1}\sum_{|r|= \widehat{Y}}|(G,r)|\ll (|G|\widehat{Y})^{\varepsilon}$ for any $Y\in\z_{\geq 0}$ and $G\in \O$.\\
Note that if $\varpi\parallel r_2$ or $\varpi^2\parallel r_2$, then $\varpi\mid (j_1,j_2)$. In particular, if we put $\eta(r_2)=\prod \varpi$, where the product is over all $\varpi\mid r_2$ such that $\varpi\parallel r_2$ or $\varpi^2\parallel r_2$, then we have $\bm{j}= \eta(r_2) \bm{k}$ for some $|\bm{k}|\ll |P|/|\eta(r_2)|$. It follows that 
\begin{align*}
\mathcal{S}&\ll |P|^\varepsilon\sum_{\substack{r\text{ monic}\\ |r|\leq \widehat{Q}}}|\eta(r)|^{-1}\sum_{\substack{|\bm{k}|\ll |P|/|\eta(r)|\\ k_1k_2\neq 0}} \frac{|(r,\eta(r)k_1)|^{1/2}|(r,\eta(r)k_2)|^{1/2}}{|k_1k_2|^{1/2}}\\
&\ll |P|^\varepsilon\sum_{\substack{r\text{ monic}\\ |r|\leq \widehat{Q}}}\sum_{\substack{|\bm{k}|\ll |P|/|\eta(r)|\\ k_1k_2\neq 0}} \frac{|(r,k_1)|^{1/2}|(r,k_2)|^{1/2}}{|k_1k_2|^{1/2}}.
\end{align*}
The sum over $\bm{k}$ above factors into $(\sum_k |(r,k)|^{1/2}|k|^{-1/2})^2$, which we can estimate as
\begin{align*}
    \sum_{\substack{|k|\ll |P|/|\eta(r)|\\ k\neq 0}}\frac{|(r,k)|^{1/2}}{|k|^{1/2}} &\ll \sum_{d\mid r}|d|^{1/2}\sum_{\substack{|k'|\ll |P|/|\eta(r)d|\\ (r, k')=1}}|k'd|^{-1/2}\\
    &\ll \sum_{d\mid r}|P|^{1/2}|\eta(r)|^{-1/2}.
\end{align*}
Since  $\sum_{d\mid r}1\ll  |r|^{\varepsilon}\ll |P|^\varepsilon$, we thus arrive at 
\[
\mathcal{S}\ll |P|^{1+\varepsilon}\sum_{|r|\leq \widehat{Q}} |\eta(r)|^{-1}.
\]
Next we write $r=st_1^2t_3$, where $s, t_1,t_3$ are pairwise coprime and monic, $t_3$ is cube-full and $s$ is square-free. With this notation we clearly have $\eta(r)=st_1$ and there are at most $O(\widehat{Q}^{1/3})=O(|P|^{1/2})$ available $t_3$, so that
\begin{align*}
    \mathcal{S}&\ll |P|^{3/2+\varepsilon}\sum_{|s|\leq \widehat{Q}}|s|^{-1}\sum_{|t_1|\leq (\widehat{Q}/|s|)^{1/2}}|t_1|^{-1}\\
    &\ll |P|^{3/2+\varepsilon}\sum_{|s|\leq \widehat{Q}}|s|^{-1}(\widehat{Q}/|s|)^{\varepsilon/2}\\
    &\ll|P|^{3/2+\varepsilon}\widehat{Q}^{3\varepsilon/2}.
\end{align*}
With a new choice of $\varepsilon$ this estimate suffices for our purpose.

Next we consider the case when $j_1j_2 F_0(\bm{j}) = 0$. If $j_1j_2 \neq 0$ but $F_0(\bm{j}) = 0$, then there exist some $j, \nu_i \in \O$ such that $j_i = \nu_i j$. The number of possible $\nu_i$ can be estimated by $O(1)$. In this case Lemma~\ref{lem.n=4_integral_bounds} and Lemma~\ref{Eq: EstimatesT_r(j)} yield
\[
J_r(\bm{j}) \ll \lvert P \rvert^{\varepsilon} \lvert j \rvert^{-1}, \quad \text{and} \quad T_r(\bm{j}) \ll \lvert r \rvert^2 \lvert (r,j) \rvert.
\]
The total contribution to $\mathcal{S}$ of such $\bm{j}$ is therefore bounded by
\[
\lvert P \rvert^\varepsilon \sum_{\substack{r\text{ monic}\\ |r|\leq \widehat{Q}}} \sum_{\substack{j \ll P \\ j \neq 0}} \lvert j \rvert^{-1} \lvert (r,j) \rvert \ll \lvert P \rvert^{3/2+\varepsilon},
\]
which is sufficient. 

Finally we need to consider the case when one of $j_i = 0$. We may assume $j_2 = 0$ since the other case is analogous. Write $j_1 = j$, then the second part of Lemma~\ref{lem.n=4_integral_bounds} gives
\[
J_r(\bm{j}) \ll \frac{\lvert P \rvert^{1/4}}{\left( \lvert j \rvert \lvert r \rvert \right)^{1/2} }.
\]
Combining the estimates in Lemma~\ref{Eq: EstimatesT_r(j)} also gives
\[
T_r(\bm{j}) \ll \lvert r \rvert^{5/2+\varepsilon} \lvert (j,r) \rvert m(r)^{-1/2},
\]
where $m(r) = \prod_{\varpi \parallel r} \varpi$.  The contribution to $\mathcal{S}$ of $\bm{j}$ under consideration is therefore bounded by
\[
\lvert P \rvert^{1/4} \sum_{\substack{r\text{ monic}\\ |r|\leq \widehat{Q}}} \sum_{\substack{j \ll P \\ j \neq 0}} \lvert (j,r) \rvert\lvert j\rvert^{-1/2}  m(r)^{-1/2}.
\]
Since $\sum_{0< j \ll P} \lvert (j,r) \rvert \lvert j \rvert^{-1/2} \ll q^\varepsilon \lvert P \rvert^{1/2 + \varepsilon}$ we get an overall bound 
\[
\lvert P \rvert^{3/4+\varepsilon} \sum_{\substack{r\text{ monic}\\ |r|\leq \widehat{Q}}} m(r)^{-1/2}.
\]
Write $r= r_1 r_2$ where $r_1$ is square-free and $r_2$ is square-full. Note that then $m(r) = r_1$ and there are at most $O\left(\left(\widehat{Q}/\lvert r_1 \rvert\right)^{1/2}\right)$ available $r_2$. Hence
\[
 \sum_{\substack{r\text{ monic}\\ |r|\leq \widehat{Q}}} m(r)^{-1/2} \ll \widehat{Q}^{1/2} \sum_{\substack{r_1\text{ monic}\\ |r_1|\leq \widehat{Q}}} \lvert r_1 \rvert^{-1} \ll \lvert P \rvert^{3/4+\varepsilon},
\]
and so the desired bound of $O(\lvert P \rvert^{3/2+\varepsilon})$ contributed from $\bm{j}$'s such that either $j_1 = 0$ or $j_2 = 0$ follows. Altogether, we have shown
\[
\mathcal{S} \ll \lvert P \rvert^{3/2 + \varepsilon},
\]
which completes the proof of Lemma~\ref{lem.special_soln=lines}.

\printbibliography

@article{CubicHypersurfacesBV,
          volume = {25},
          number = {3},
           %month = {June},
          author = { T.D. Browning and  P. Vishe},
           title = {Rational points on cubic hypersurfaces over $\F_q(t)$.
},
       publisher = {Springer},
            year = {2015},
         journal = {Geometric and functional analysis.},
           pages = {671--732},
            % url = {http://dro.dur.ac.uk/18069/},
        abstract = {The Hasse principle and weak approximation is established for non-singular cubic hypersurfaces X over the function field Fq(t)Fq(t), provided that char (Fq)>3(Fq)>3 and X has dimension at least 6.}
}

@article{DiagonalCubicHB,
  title={The circle method and diagonal cubic forms},
  author={Heath-Brown, DR},
  journal={Philosophical Transactions of the Royal Society of London. Series A: Mathematical, Physical and Engineering Sciences},
  volume={356},
  number={1738},
  pages={673--699},
  year={1998},
  publisher={The Royal Society}
}

@book{Browning2021,
author={Browning, Tim},
title={Cubic Forms and the Circle Method},
year={2021},
series={Progress in Mathematics},
publisher={Birkh{\"a}user}
}

@article{heath1983cubic,
  title={Cubic forms in ten variables},
  author={Heath-Brown, DR},
  journal={Proceedings of the London Mathematical Society},
  volume={3},
  number={2},
  pages={225--257},
  year={1983},
  publisher={Oxford Academic}
}

@misc{davLetter,
  %title={Private letter to Keith Matthews},
  author={Harold Davenport},
  %year={1964},
  howpublished={Private letter to Keith Matthews. 1964. Online summary of the letter at \href{http://www.numbertheory.org/thesis/remarks_by_davenport.pdf}{http://www.numbertheory.org/thesis/remarks{\textunderscore}by{\textunderscore}davenport.pdf
}},
 % archivePrefix={Summary available at},
  %URL={http://www.numbertheory.org/thesis/remarks_by_davenport.pdf},
  %note = {[Online summary of the letter]}
}

@article{hooley1986waring,
author = {Christopher Hooley},
title = {{On Waring’s problem}},
volume = {157},
journal = {Acta Mathematica},
%number = {none},
publisher = {Institut Mittag-Leffler},
pages = {49 -- 97},
year = {1986},
doi = {10.1007/BF02392591},
%URL = {https://doi.org/10.1007/BF02392591}
}

@article{LiuWooley2010,
author = {Yu-Ru Liu and Trevor D. Wooley},
doi = {doi:10.1515/crelle.2010.001},
%url = {https://doi.org/10.1515/crelle.2010.001},
title = {Waring's problem in function fields},
journal = {Journal für die reine und angewandte Mathematik},
number = {638},
volume = {2010},
year = {2010},
pages = {1--67}
}

@article {Kubota1974,
    AUTHOR = {Kubota, R. M.},
     TITLE = {Waring's problem for {${\bf F}_{q}[x]$}},
   JOURNAL = {Dissertationes Math. (Rozprawy Mat.)},
  FJOURNAL = {Polska Akademia Nauk. Instytut Matematyczny. Dissertationes
              Mathematicae. Rozprawy Matematyczne},
    VOLUME = {117},
      YEAR = {1974},
     PAGES = {60},
      ISSN = {0012-3862},
   MRCLASS = {10J05},
  MRNUMBER = {376581},
MRREVIEWER = {F. Dress},
}

@book{igusa2007introduction,
  title={An introduction to the theory of local zeta functions},
  author={Igusa, Jun},
  volume={14},
  year={2007},
  publisher={American Mathematical Soc.}
}

@article{deligne1974conjecture,
  title={La conjecture de Weil: I},
  author={Deligne, Pierre},
  journal={Publications Math{\'e}matiques de l'Institut des Hautes {\'E}tudes Scientifiques},
  volume={43},
  number={1},
  pages={273--307},
  year={1974},
  publisher={Springer}
}

@misc{wang2021diagonal,
    title={Diagonal cubic forms and the large sieve},
    author={Victor Y. Wang},
    year={2021},
    eprint={2108.03395},
    archivePrefix={arXiv},
    primaryClass={math.NT}
}

@article{deligne1980conjecture,
  title={La conjecture de Weil: II},
  author={Deligne, Pierre},
  journal={Publications Math{\'e}matiques de l'Institut des Hautes {\'E}tudes Scientifiques},
  volume={52},
  pages={137--252},
  year={1980}
}

@article{lee2011birch,
%   title={Birch's theorem in function fields},
%   author={Lee, Siu-liu Alan},
%   journal={arXiv preprint arXiv:1109.4953},
%   year={2011}
% }

@article{Hooley+1988+32+98,
author = {Christopher Hooley},
doi = {doi:10.1515/crll.1988.386.32},
%url = {https://doi.org/10.1515/crll.1988.386.32},
title = {On nonary cubic forms.},
journal = {Journal für die reine und angewandte Mathematik},
number = {386},
volume = {1988},
year = {1988},
pages = {32--98}
}

@article{hooley2014octonary,
  title={On octonary cubic forms},
  author={Hooley, Christopher},
  journal={Proceedings of the London Mathematical Society},
  volume={109},
  number={1},
  pages={241--281},
  year={2014},
  publisher={Oxford University Press}
}

@article{franke1989rational,
  title={Rational points of bounded height on Fano varieties},
  author={Franke, J and Manin, Yuri I and Tschinkel, Yuri},
  journal={Inventiones Mathematicae},
  pages={421--435},
  volume={95},
  year={1989}
}

@article{duke1993bounds,
  title={Bounds for automorphic $L$-functions},
  author={Duke, William and Friedlander, John and Iwaniec, Henryk},
  journal={Inventiones mathematicae},
  volume={112},
  number={1},
  pages={1--8},
  year={1993},
  publisher={Springer}
}

@misc{wang2021approaching,
    title={Approaching cubic Diophantine statistics via mean-value $L$-function conjectures of Random Matrix Theory type},
    author={Victor Y. Wang},
    year={2021},
    eprint={2108.03398},
    archivePrefix={arXiv},
    primaryClass={math.NT}
}

@misc{wang2021isolating,
    title={Isolating special solutions in the delta method: The case of a diagonal cubic equation in evenly many variables over $\mathbb{Q}$},
    author={Victor Y. Wang},
    year={2021},
    eprint={2108.03396},
    archivePrefix={arXiv},
    primaryClass={math.NT}
}

@article{peyre1995hauteurs,
  title={Hauteurs et mesures de Tamagawa sur les vari{\'e}t{\'e}s de Fano},
  author={Peyre, Emmanuel},
  journal={Duke Mathematical Journal},
  volume={79},
  number={1},
  pages={101--218},
  year={1995},
  publisher={Duke University Press}
}

@incollection{colliot2003points,
  title={Points rationnels sur les fibrations},
  author={Colliot-Th{\'e}l{\`e}ne, Jean-Louis},
  booktitle={Higher dimensional varieties and rational points},
  pages={171--221},
  year={2003},
  publisher={Springer}
}

@article{tian2017hasse,
  title={Hasse principle for three classes of varieties over global function fields},
  author={Tian, Zhiyu},
  journal={Duke Mathematical Journal},
  volume={166},
  number={17},
  pages={3349--3424},
  year={2017},
  publisher={Duke University Press}
}

@article{tianWa,
  title={Weak approximation for cubic hypersurfaces and degree 4 del Pezzo surfaces},
  author={Tian, Zhiyu and Zhang, Letao},
  journal={International Mathematics Research Notices},
  volume={2018},
  number={3},
  pages={762--784},
  year={2018},
  publisher={Oxford University Press}
}

@book{lidl1997finite,
%   title={Finite fields},
%   author={Lidl, Rudolf and Niederreiter, Harald},
%   number={20},
%   year={1997},
%   publisher={Cambridge university press}
% }

@book{hua_1965,
	title = {Additive theory of prime numbers},
	series = {Translations of mathematical monographs 13},
	author = {Hua, L.},
	language = {eng},
	note = {XIII, 190 S.},
	type = {Band},
	address = {Providence, Rhode Island},
	publisher = {American Mathematical Society},
	volume = {13},
	year = {1965},
	keywords = {Additive Primzahltheorie}
	}

@article{car-cherly,
  title={Sommes de cubes dans l’anneau {$\mathbb{F}_{2^h}[X]$}},
  author={Car, Mireille and Cherly, Jorgen},
  journal={Acta Arithmetica},
  volume={65},
  number={3},
  pages={227--241},
  year={1993}
}

@article{gallardo2000restricted,
  title={On the restricted Waring problem over {$\mathbb{F}_{2^n}[t]$}},
  author={Gallardo, Luis},
  journal={Acta Arithmetica},
  volume={92},
  number={2},
  pages={109--113},
  year={2000}
}

@article{car-gallardo,
  title={Sums of cubes of polynomials},
  author={Car, Mireille and Gallardo, Luis},
  journal={Acta Arithmetica},
  volume={112},
  pages={41--50},
  year={2004},
  %publisher={Instytut Matematyczny Polskiej Akademii Nauk}
}

@article{Vaughan86_cubic_waring,
author = {R.C. Vaughan},
doi = {doi:10.1515/crll.1986.365.122},
%url = {https://doi.org/10.1515/crll.1986.365.122},
title = {On Waring's problem for cubes.},
journal = {Journal für die reine und angewandte Mathematik},
number = {365},
volume = {1986},
year = {1986},
pages = {122--170},
lastchecked = {2022-05-09}
}

@article{car1992sommes,
  title={Sommes d’exponentielles dans $\F_ {2^h}((X^{-1}))$},
  author={Car, Mireille},
  journal={Acta Arithmetica},
  volume={62},
  pages={303--328},
  year={1992},
  publisher={Instytut Matematyczny Polskiej Akademii Nauk}
}
\end{document}